\definecolor{darkgreen}{rgb}{0,0.45,0}
\theoremstyle{plain}
\newtheorem{thm}{Theorem}
\newtheorem*{thm*}{Theorem}
\newtheorem{prop}[thm]{Proposition}
\newtheorem{lem}[thm]{Lemma}
\newtheorem*{prop*}{Proposition}
\newtheorem{cor}[thm]{Corollary}
\newtheorem{conj}[thm]{Conjecture}
\theoremstyle{definition}
\newtheorem{defn}[thm]{Definition}
\newtheorem{example}[thm]{Example}
\theoremstyle{remark}
\theoremstyle{plain} 
\newcommand{\thistheoremname}{}
\newtheorem*{genericthm*}{\thistheoremname}
\newenvironment{namedthm*}[1]
  {\renewcommand{\thistheoremname}{#1}%
   \begin{genericthm*}}
  {\end{genericthm*}}
\title[2-Rig Extensions and The Splitting Principle]{2-Rig Extensions and the Splitting Principle}
\date{\today}
\author[Baez]{John C.\ Baez$^1$} 
\author[Moeller]{Joe Moeller$^2$}
\author[Trimble]{\\Todd Trimble$^3$}
\address{$^1$Department of Mathematics, University of California, Riverside, CA 92521, USA}
\address{$^2$California Institute of Technology, Pasadena, CA 91125, USA}
\address{$^3$Department of Mathematics, Western Connecticut State University, Danbury, CT 06810, USA}
\email{baez@math.ucr.edu, jmoeller@caltech.edu, trimblet@wcsu.edu}
\DeclareFontFamily{U}{min}{}
\DeclareFontShape{U}{min}{m}{n}{<-> udmj30}{}
\newcommand{\define}[1]{{\bf \boldmath{#1}}\index{#1}}
\newcommand{\maps}{\colon}
\newcommand{\To}{\Rightarrow}
\newcommand{\op}{^\mathrm{op}}
\newcommand{\ev}{\mathrm{ev}}
\newcommand{\coev}{\mathrm{coev}}
\renewcommand{\hom}{\mathrm{hom}}
\newcommand{\category}[1]{\mathsf{#1}}
\newcommand{\A}{\category A}
\newcommand{\B}{\category B}
\newcommand{\C}{\category C}
\newcommand{\D}{\category D}
\newcommand{\E}{\category E}
\newcommand{\K}{\category K}
\newcommand{\M}{\mathrm{M}}
\newcommand{\N}{\mathbb N}
\newcommand{\R}{\category R}
\renewcommand{\S}{\category S}
\newcommand{\Z}{\mathbb Z}
\newcommand{\GL}{\mathrm{GL}}
\newcommand{\BU}{\mathrm{BU}}
\newcommand{\U}{\mathrm{U}}
\newcommand{\BT}{\mathrm{BT}}
\newcommand{\T}{\mathrm{T}}
\newcommand{\CC}{\mathbb{C}}
\newcommand{\Q}{\mathbb{Q}}
\newcommand{\namedcat}[1]{\mathsf{#1}}
\newcommand{\Ainfty}{\mathsf{A}^{\boxtimes \infty}}
\newcommand{\Ab}{\namedcat{Ab}}
\newcommand{\Alg}{\namedcat{Alg}}\newcommand{\Aff}{\namedcat{AffSch}}
\newcommand{\Comm}{\namedcat{Comm}}
\newcommand{\End}{\namedcat{End}}
\newcommand{\Fin}{\namedcat{Fin}}
\newcommand{\CMon}{\namedcat{CMon}}
\newcommand{\Rep}{\namedcat{Rep}}
\newcommand{\Set}{\namedcat{Set}}
\newcommand{\SMC}{\namedbicat{SMCat}}
\newcommand{\SMLin}{\namedbicat{SMLin}}
\newcommand{\TRig}{2\mhyphen\namedbicat{Rig}}
\newcommand{\Vect}{\namedcat{Vect}}
\newcommand{\VB}{\namedcat{VB}}
\newcommand{\Coh}{\namedcat{Coh}}
\newcommand{\namedbicat}[1]{\mathbf{#1}}
\newcommand{\AAff}{\namedbicat{AffSch}}
\newcommand{\CCat}{\namedbicat{Cat}}
\newcommand{\FFin}{\namedbicat{Fin}}
\newcommand{\Lin}{\namedbicat{Lin}}
\newcommand{\Cauch}{\namedbicat{Cauch}}
\newcommand{\john}[1]{\textcolor{purple}{#1}}
\newcommand{\todd}[1]{\textcolor{darkgreen}{#1}}
\mathchardef\mhyphen="2D
\newcommand{\G}{\mathsf{G}} 
\newcommand{\ksbar}{\overline{k\S}}
\newcommand{\Sym}{\mathrm{S}}
\newcommand{\triv}{\mathrm{triv}}
\newcommand{\sign}{\mathrm{sgn}}
\newcommand{\Spec}{\mathrm{Spec}}
\DeclareMathOperator{\im}{im}
\definecolor {processblue}{cmyk}{0.9,0.5,0,0}
\tikzstyle{simple}=[-,line width=2.000]
\tikzstyle{arrow}=[-,postaction={decorate},decoration={markings,mark=at position .5 with {\arrow{>}}},line width=1.100]
\tikzstyle{none}=[inner sep=-1pt]
\tikzstyle{species}=[circle,fill=none,draw=black,scale=1.0]
\tikzstyle{transition}=[rectangle,fill=none,draw=black,scale=1.15]
\tikzstyle{empty}=[circle,fill=none, draw=none]
\tikzstyle{inputdot}=[circle,fill=black,draw=black, scale=.5]
\tikzstyle{dot}=[circle,fill=black,draw=black]
\tikzstyle{bounding}=[circle,dashed, fill=none,draw=black, scale=9.00]
\tikzstyle{simple}=[-,draw=black,line width=1.000]
\tikzstyle{inarrow}=[-,draw=black,postaction={decorate},decoration={markings,mark=at position .5 with {\arrow{>}}},line width=1.000]
\tikzstyle{tick}=[-,draw=black,postaction={decorate},decoration={markings,mark=at position .5 with {\draw (0,-0.1) -- (0,0.1);}},line width=1.000]
\tikzstyle{inputarrow}=[->,draw=black, shorten >=.05cm]
\tikzset{main node/.style={circle,fill=blue!20,draw,minimum size=1cm,inner sep=0pt},}
\tikzstyle{construct}=[fill=white, draw=black, shape=circle]
\tikzstyle{universal}=[fill=black, draw=black, shape=circle]
\tikzset{
    labl/.style={anchor=north, rotate=90, inner sep=.5mm}
}
\tikzset{
    labl2/.style={anchor=south, rotate=90, inner sep=.5mm}
}
\numberwithin{thm}{section}
\begin{document}
\begin{abstract}
Classically, the splitting principle says how to pull back a vector bundle in such a way that it splits into line bundles and the pullback map induces an injection on $K$-theory.  Here we categorify the splitting principle and generalize it to the context of 2-rigs.  A 2-rig is a kind of categorified `ring without negatives', such as a category of vector bundles with $\oplus$ as addition and $\otimes$ as multiplication.  Technically, we define a 2-rig to be a Cauchy complete $k$-linear symmetric monoidal category where $k$ has characteristic zero.  We conjecture that for any suitably finite-dimensional object $r$ of a 2-rig $\R$, there is a 2-rig map $E \maps \R \to \R'$ such that $E(r)$ splits as a direct sum of finitely many `subline objects' and $E$ has various good properties: it is faithful, conservative, essentially injective, and the induced map of Grothendieck rings $K(E) \maps K(\R) \to K(\R')$ is injective.  We prove this conjecture for the free 2-rig on one object, namely the category of Schur functors, whose Grothendieck ring is the free $\lambda$-ring on one generator, also known as the ring of symmetric functions.  We use this task as an excuse to develop the representation theory of affine categories---that is, categories enriched in affine schemes---using the theory of 2-rigs.
\end{abstract}

\maketitle
\setcounter{tocdepth}{1} 
\tableofcontents

\newpage 

\section{Introduction}
\label{sec:intro}

The splitting principle is a fundamental concept in algebraic topology, representation theory, and algebraic geometry. It allows us to study a complicated object by finding a larger category in which it splits as a direct sum of simpler ones. Classically, the splitting principle has been used to study the Grothendieck ring of vector bundles on a connected topological space $X$. This is done by pulling back a vector bundle $E$ over $X$ along some map $\phi \maps Y \to X$ such that $\phi^\ast E$ is isomorphic to a sum of line bundles.  Our aim here is to place the splitting principle in a broader context, namely the context of 2-rigs.

A `2-rig' (over a field $k$) is a symmetric monoidal $k$-linear category that is Cauchy complete, and we consider the case where $k$ is a field of characteristic zero. Examples of 2-rigs include categories of vector bundles, group representations, and coherent sheaves.  In the classical vector bundle case, the splitting principle can be expressed as finding for any object $E$ in the 2-rig $\Vect(X)$ of vector bundles over $X$ a new 2-rig $\Vect(Y)$ of vector bundles over some space $Y$, together with a 2-rig map $\phi^\ast \maps \Vect(X) \to \Vect(Y)$ that sends $E$ to a direct sum of line objects.  A splitting principle for 2-rigs would then say that any 2-rig containing a suitably finite object can be extended to a 2-rig in which the object can be split into a sum of `subline objects'---a generalization of line bundles which we define.

\begin{namedthm*}{Splitting Principle for 2-Rigs (Conjecture)}
    Let $\R$ be a 2-rig and $r \in \R$ an object of finite subdimension. Then there exists a 2-rig $\R'$ and a map of 2-rigs $E \maps \R \to \R'$ such that:
    \begin{enumerate}
    \item $E(r)$ splits as a direct sum of finitely many subline objects.
    \item $E \maps \R \to \R'$ is a \define{2-rig extension}: it is faithful, conservative (i.e.\ it reflects isomorphisms), and essentially injective.
    \item $K(E) \maps K(\R) \to K(\R')$ is  injective.  
    \end{enumerate}
\end{namedthm*}


 Items (1) and (2) are not only a generalization of the usual splitting principle from vector bundles to arbitrary 2-rigs: they are also a \emph{categorification}, since they apply at the level of 2-rigs rather than their Grothendieck rings. The 2-rig map $E \maps \R \to \R'$ induces ring homomorphisms between their Grothendieck rings, $K(E) \maps K(\R) \to K(\R')$, and item (3) claims this is injective.  This injectivity is used to prove equations in $K(\R)$ using calculations in $K(\R')$.   Item (2) states a categorified version of this injectivity, which only implies item (3) under restricted conditions.

In this paper we prove the above conjecture for the universal example, namely the free 2-rig on one generating object. This is equivalent to the category of Schur functors \cite{Schur}, but we call it $\ksbar$ since it can be obtained by the following three-step process:
\begin{itemize}
    \item First form the free symmetric monoidal category on one generating object $x$. This is equivalent to groupoid of finite sets and bijections, which we call $\S$, with disjoint union providing the symmetric monoidal structure.
    \item Then form the free $k$-linear symmetric monoidal category on $\S$ by freely forming $k$-linear combinations of morphisms. This is called $k\S$.
    \item Then Cauchy complete $k\S$. The result, $\ksbar$, is the coproduct, as Cauchy complete $k$-linear categories, of the categories of finite-dimensional representations of all the symmetric groups $S_n$.
\end{itemize}
We describe a 2-rig map 
\[  F \maps \ksbar \to \Ainfty \]
from $\ksbar$ to the limit 
\[   \Ainfty := \lim_{\longleftarrow} \A^{\boxtimes N} \]
where $\A^{\boxtimes N}$ is our name for the free 2-rig on $N$ subline objects, say $s_1, \dots, s_N$.  The 2-rig $\Ainfty$ 
contains infinitely many subline objects $s_1, s_2, s_3, \dots $,
and the 2-rig map $F$ is characterized by the fact that it sends the generating object $x \in \ksbar$ to the infinite coproduct $s_1 \oplus s_2 \oplus \cdots$.   One of our main results, \cref{thm:splitting2}, analyzes the properties of this 2-rig map: 

\begin{thm*}
The 2-rig map $F \maps \ksbar \to \Ainfty$ is an extension of 2-rigs.
\end{thm*}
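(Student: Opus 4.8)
The plan is to exploit the fact that the source $\ksbar$ is \emph{semisimple}. Since $\ksbar$ is the coproduct of the categories $\Rep(S_n)$ and $k$ has characteristic zero, every object is a finite direct sum of the simple Schur objects $S^\lambda$, one for each partition $\lambda$, with $\hom(S^\lambda, S^\mu) = k$ when $\lambda = \mu$ and $0$ otherwise. Thus a morphism between objects $a = \bigoplus_\lambda (S^\lambda)^{\oplus m_\lambda}$ and $b = \bigoplus_\lambda (S^\lambda)^{\oplus n_\lambda}$ is just a block-diagonal family of scalar matrices $f_\lambda$, and $F(f)$ is the direct sum of the morphisms $f_\lambda \otimes 1_{F(S^\lambda)} \maps F(S^\lambda)^{\oplus m_\lambda} \to F(S^\lambda)^{\oplus n_\lambda}$. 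The whole theorem will therefore follow once I control the objects $F(S^\lambda)$ and their classes $[F(S^\lambda)] \in K(\Ainfty)$.

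The key computation is that $F(S^\lambda)$ is the Schur functor $S^\lambda$ evaluated at the infinite coproduct. Since $F$ is a 2-rig map it preserves the symmetric monoidal structure and the idempotent splittings used to build Schur functors, so $F(S^\lambda) = S^\lambda(F(x)) = S^\lambda(s_1 \oplus s_2 \oplus \cdots)$. First I would pass to a finite stage: composing $F$ with the projection $\Ainfty \to \A^{\boxtimes N}$ sends $x$ to $s_1 \oplus \cdots \oplus s_N$, and the classical splitting of a Schur functor applied to a sum of sublines decomposes $S^\lambda(s_1 \oplus \cdots \oplus s_N)$ into a direct sum of tensor monomials indexed by semistandard Young tableaux of shape $\lambda$. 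Identifying $K(\A^{\boxtimes N})$ with $\Z[t_1, \dots, t_N]$ (with $t_i = [s_i]$), this says exactly that the image of $[F(S^\lambda)]$ is the Schur polynomial $s_\lambda(t_1, \dots, t_N)$. Since for any finite collection of partitions one may take $N$ larger than all their lengths, and Schur polynomials in enough variables are nonzero and linearly independent, I conclude that the classes $[F(S^\lambda)]$ are linearly independent over $\Z$ and non-torsion in $K(\Ainfty)$; in particular each $F(S^\lambda)$ is a nonzero object.

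With these facts in hand the three properties drop out. For \emph{faithfulness}: since $k$ is a field and $F(S^\lambda) \neq 0$, the morphism $f_\lambda \otimes 1_{F(S^\lambda)}$ vanishes iff the scalar matrix $f_\lambda$ does, so $F(f) = 0$ forces every $f_\lambda = 0$ and hence $f = 0$. For \emph{conservativity}: a block-diagonal morphism is invertible iff each block is (this holds even when the summands $F(S^\lambda)$ are not pairwise distinct), so $F(f)$ being an isomorphism forces each $f_\lambda \otimes 1_{F(S^\lambda)}$ to be one; non-torsion-ness of $[F(S^\lambda)]$ then forces $m_\lambda = n_\lambda$, and a square $f_\lambda \otimes 1_{F(S^\lambda)}$ is invertible iff $f_\lambda$ is, so $f$ is an isomorphism. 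For \emph{essential injectivity}: if $F(a) \cong F(b)$ then $\sum_\lambda m_\lambda [F(S^\lambda)] = \sum_\lambda n_\lambda [F(S^\lambda)]$ in $K(\Ainfty)$, and linear independence of the $[F(S^\lambda)]$ gives $m_\lambda = n_\lambda$ for all $\lambda$, whence $a \cong b$.

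The main obstacle is the middle step: making the categorified splitting formula precise and transporting it through the inverse limit. One must set up $\Ainfty = \varprojlim \A^{\boxtimes N}$ so that the infinite coproduct $s_1 \oplus s_2 \oplus \cdots$ genuinely names an object—the projections kill the extra sublines, so the truncations $s_1 \oplus \cdots \oplus s_N$ form a compatible family—then verify that $K(\A^{\boxtimes N}) \cong \Z[t_1, \dots, t_N]$ with $[F(S^\lambda)]$ the Schur polynomial, and check that these finite computations assemble, via the classical identification of the ring of symmetric functions with $\varprojlim_N \Z[t_1,\dots,t_N]$, to the required independence in $K(\Ainfty)$. Everything else is bookkeeping with the semisimple structure of $\ksbar$.
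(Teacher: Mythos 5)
Your proof is essentially correct, but it takes a genuinely different route from the paper's. The paper filters $\ksbar$ by the subcategories $\ksbar_{\le n}$ and factors $F$ (after projecting to a finite stage $\A^{\boxtimes N}$) through $\Rep(\M(N,k))$ and $\Rep(k^N)$; the technical core is then (i) a Schur--Weyl-type computation showing $kS_n \xrightarrow{\sim} [\M(N,k),\Vect](V^{\otimes n},V^{\otimes n})$, which gives full faithfulness of $\ksbar_{\le n} \to \Rep(\M(N,k))$, and (ii) essential injectivity of restriction to the diagonal torus, obtained from the theory of split reductive groups (or Weyl's unitarian trick when $k = \CC$), followed by a colimit argument. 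You instead exploit semisimplicity of the source to reduce the whole theorem to two facts about the simples $S^\lambda$: that $F(S^\lambda) \neq 0$ and that the classes $[F(S^\lambda)]$ are $\Z$-linearly independent and non-torsion in $K(\Ainfty)$, both of which you extract by projecting to $K(\A^{\boxtimes N}) \cong \Z[t_1,\dots,t_N]$ and identifying $[S^\lambda(s_1\oplus\cdots\oplus s_N)]$ with the Schur polynomial $s_\lambda(t_1,\dots,t_N)$. I checked the two points where your argument is most delicate and they hold: a block-diagonal morphism with invertible total is forced (via the left- and right-inverse identities) to have invertible diagonal blocks even when the summands coincide, and non-torsionness of $[F(S^\lambda)]$ does pin down $m_\lambda = n_\lambda$. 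Your route is shorter and self-contained for this one theorem, and it needs only the graded character of $S^\lambda(k^N)$ (not the reductive-group machinery); what it costs is generality and by-products. It leans on classical symmetric-function combinatorics as an input, where the paper partially recovers such facts as output (\cref{thm:injection_on_K-theory}); it does not yield the intermediate results that the maps $A$, $B$, $C$, $D$, $E$ are themselves extensions, which the paper needs for its broader program; and, most importantly, the semisimplicity of $\ksbar$ is a crutch that cannot survive the passage to \cref{conj:main_conjecture}, where the source 2-rig $\R$ is arbitrary. Do make sure, when writing this up, that the identification $K(\pi_N)[F(S^\lambda)] = s_\lambda(t_1,\dots,t_N)$ is justified at the level of objects of $\A^{\boxtimes N}$ (the graded dimension of the weight decomposition of $S^\lambda(k^N)$), and that your finite linear-independence argument quantifies $N$ against the lengths of all partitions appearing in a given relation.
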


\noindent
This categorifies a classical result, namely that the free $\lambda$-ring on one generator can be identified with the $\lambda$-ring $\Lambda$ of \define{symmetric functions}: elements of $\Z[[x_1, x_2, \dots]]$ that are of bounded degree and invariant under all permutations of the variables. 

It is worth recalling how this classical result is connected to the splitting principle for vector bundles.  The classifying space $\BU$ of the infinite-dimensional unitary group 
\[   \U = \lim_{\longrightarrow} \, \U(n) \]
has the property that $K(\BU)$ is the free $\lambda$-ring on one generator.  But if one defines a subgroup $\T \subset \U$ by 
\[   \T = \lim_{\longrightarrow} \, \T^n \]
where $\T^n$ is the maximal torus of $\U(n)$, then one can show $K(\T)$ is the subring of $\Z[[x_1, x_2, \dots ]]$ consisting of power series of bounded degree.  Furthermore, the inclusion of $\T$ in $\U$ gives a map $\phi \maps \BT \to \BU$ for which the map of $\lambda$-rings $K(\phi) \maps K(\BU) \to K(\BT)$ is injective and its image is $\Lambda$.   In this situation we can also take the universal $n$-dimensional vector bundle over $\BU$, pull it back along $\phi$, and split the resulting bundle into a sum of line bundles whose $K$-theory classes are $x_1, \dots, x_n$.

Hazewinkel \cite{Hazewinkel} has written of $\Lambda$ that ``It seems unlikely that there is any object in mathematics richer and/or more beautiful than this one.'' But the $\lambda$-ring structure on symmetric functions is often described using rather complicated and unintuitive formulas in terms of symmetric functions.  Our theorem above lets us prove that symmetric functions form the free $\lambda$-ring on one generator in a more conceptual way. The Grothendieck group $K(\R)$ of any 2-rig $\R$ is a $\lambda$-ring, and any map of 2-rigs induces a map of $\lambda$-rings. In \cite{Schur} we used the fact that $\ksbar$ is the free 2-rig on one generator to prove that $\K(\ksbar)$ is the free $\lambda$-ring on one generator.   The 2-rig map $F \maps \ksbar \to \Ainfty$ induces an inclusion of $\lambda$-rings
\[   K(F) \maps K(\ksbar) \to K(\Ainfty) .\]
In \cref{thm:K(Ainfty)} we prove that $K(\Ainfty)$ is the subring of $\Z[[x_1, x_2, \dots]]$ consisting of power series of bounded degree. In \cref{thm:injection_on_K-theory} we show that $K(F)$ is an injection and its range consists of symmetric functions.  Thus, symmetric functions form the free $\lambda$-ring on one generator.

However, proving this classical result is not our main goal.  More important is to \emph{categorify} this result, and arguably still more important is to set the categorified result into a broader theory of 2-rig extensions.  We prove that the map $F$ fits into this diagram of 2-rig maps, which commutes up to natural isomorphism:
\[
\begin{tikzcd}
    & & \Ainfty \arrow[d, "\pi_N"]
    \\
    \ksbar
    \arrow[r,"A"] \arrow[rru, bend left = 20, "F"]
    & 
    \Rep(\M(N,k)) 
    \ar[r,"B"]
    \ar[d,"D"']
    &
    \Rep(k^N) \simeq \A^{\boxtimes N}
    \arrow[d,"C"]
    \\
    & 
    \Rep(\GL(N,k))
    \ar[r,"E"']
    & 
    \Rep({k^\ast}^N) 
\end{tikzcd}
\]
Here $k^N$ is the multiplicative monoid of diagonal $N \times N$ matrices with entries in $k$, while ${k^\ast}^N$ is the multiplicative group of invertible diagonal $N \times N$ matrices.  The representation categories in this diagram are 2-rigs of algebraic representations of `affine monoids': monoids in the category of affine schemes.  Thus, a substantial part of our work consists of developing the representation theory of affine monoids, and more general affine categories, from the viewpoint of 2-rigs.  

We prove that all the maps in the above square are 2-rig extensions. We then prove that $F$ is an extension using all the other maps in this diagram. For a more thorough overview of this aspect of the paper, see \cref{sec:network}.

\subsection*{Notation}
We use sans-serif font for 1-categories such as $\Vect$, and bold serif font for 2-categories such as $\TRig$. 

\section{Line and subline objects}
\label{sec:line}

To proceed we need a general theory of subline objects in any 2-rig. However, it is helpful to start with line objects. These can be defined in any symmetric monoidal category:

\begin{defn} An object $\ell$ in a symmetric monoidal category is called a \define{line object} if there exists an object $\ell^\ast$ such that $\ell \otimes \ell^\ast \cong I$, where $I$ is the unit object for the tensor product.
\end{defn}

Such objects are also called `invertible' \cite{TensorCategories}. For example, the line objects in $\Vect$ are the 1-dimensional vector spaces, the line objects in $\VB(X)$ for a topological space $X$ are the line bundles, the line objects in $\Coh(X)$ for a variety $X$ are the invertible sheaves, and the line objects in $\Rep(G)$ for a group $G$ are the 1-dimensional representations.

It is well known \cite[Sec.\ 5]{BaezLauda} that if $\ell$ is a line object we can always find morphisms $\ev \maps \ell^* \otimes \ell \to I$ and $\coev \maps I \to \ell \otimes \ell^*$ such that the following diagrams commute: 
\[
\begin{tikzcd}
    &
    \ell \otimes \ell^* \otimes \ell
    \arrow[dr, "1 \otimes \ev"]
    \\
    \ell
    \arrow[ur, "\coev \otimes 1"]
    \arrow[rr, "1"']
    &&
    \ell
\end{tikzcd}
\qquad
\begin{tikzcd}
    \ell^*
    \arrow[rr, "1"]
    \arrow[dr, swap, "1 \otimes \coev"]
    &&
    \ell^*
    \\&
    \ell^* \otimes \ell \otimes \ell^*
    \arrow[ur, swap, "\ev \otimes 1"]
\end{tikzcd}
\]

Here we are mainly concerned with line objects in 2-rigs. The situation is simplest for `connected' 2-rigs. Note that for any 2-rig $\R$, the monoid of endomorphisms of the unit object $I \in \R$ is commutative by the Eckmann--Hilton argument, and a $k$-algebra because 2-rigs are linear categories. 

\begin{defn} 
    A 2-rig is \define{connected} if the  commutative $k$-algebra $\R(I, I)$ has exactly two distinct idempotents, $0$ and $1$.
\end{defn}

For example, the 2-rig $\VB(X)$ is connected if and only if the topological space $X$ is connected because $I \in \VB(X)$ is the trivial line bundle, and all the idempotents in $\VB(X)(I, I)$ are given by multiplication by continuous functions that take on only the values $0$ and $1$. For any 2-rig $\R$, splitting an idempotent $p \in \R(I,I)$ lets us write $I \cong I_1 \oplus I_2$ and then write $\R$ as a product of two 2-rigs, one with $I_1$ serving as its unit and one with $I_2$ as its unit. 

There are two kinds of line object in a connected 2-rig. To see this, recall from \cite{Schur} that any Young diagram $\lambda$ gives a functor 
\[ S_{\R, \lambda} \maps \R \to \R \]
called a Schur functor. In particular, the two 2-box Young diagrams give Schur functors called the second symmetric power $\Sym^2$ and second exterior power $\Lambda^2$, and there is a natural isomorphism $x^{\otimes 2} \cong \Sym^2(x) \oplus \Lambda^2(x)$ for any $x \in \R$. 

\begin{prop}
\label{prop:line_objects}
    Let $x$ be a line object in a connected 2-rig. Of $\Lambda^2(x)$ and $\Sym^2(x)$, one is zero and the other is $x^{\otimes 2}$.
\end{prop}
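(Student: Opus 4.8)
The plan is to exploit the fact that a line object has an especially simple endomorphism algebra, and to realize $\Sym^2(x)$ and $\Lambda^2(x)$ as the images of two complementary idempotents living in that algebra. Since $x$ is a line object, so is $x^{\otimes 2}$: if $x \otimes x^* \cong I$ then, using associativity and the symmetry, $x^{\otimes 2} \otimes (x^*)^{\otimes 2} \cong I$, so $(x^*)^{\otimes 2}$ inverts $x^{\otimes 2}$. The key structural input is then that for any line object $\ell$, tensoring with $\ell^*$ is a $k$-linear autoequivalence of $\R$ (with quasi-inverse given by tensoring with $\ell$), and it therefore induces an isomorphism of $k$-algebras
\[ \R(\ell, \ell) \;\xrightarrow{\ \sim\ }\; \R(\ell \otimes \ell^*,\, \ell \otimes \ell^*) \;\cong\; \R(I, I), \]
where the multiplication on each side is composition. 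In particular $\R(x^{\otimes 2}, x^{\otimes 2}) \cong \R(I,I)$ as commutative $k$-algebras, so connectedness transfers: the only idempotents in $\R(x^{\otimes 2}, x^{\otimes 2})$ are $0$ and the identity.

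Next I would identify the two summands with images of idempotents. The symmetry $\sigma \maps x^{\otimes 2} \to x^{\otimes 2}$ satisfies $\sigma^2 = 1$, and because $k$ has characteristic zero we may form
\[ e_+ = \tfrac{1}{2}(1 + \sigma), \qquad e_- = \tfrac{1}{2}(1 - \sigma). \]
These are the symmetrizer and antisymmetrizer idempotents in $k[S_2]$ acting on $x^{\otimes 2}$, so by the definition of Schur functors via Young symmetrizers their images are $\Sym^2(x)$ and $\Lambda^2(x)$ respectively, recovering the splitting $x^{\otimes 2} \cong \Sym^2(x) \oplus \Lambda^2(x)$. A direct computation gives $e_+ + e_- = 1$ and $e_+ e_- = \tfrac14(1 - \sigma^2) = 0$, so $e_+$ and $e_-$ are complementary orthogonal idempotents in $\R(x^{\otimes 2}, x^{\otimes 2})$.

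Now I would combine the two observations. Each of $e_\pm$ is an idempotent in $\R(x^{\otimes 2}, x^{\otimes 2})$, hence equals $0$ or the identity. They cannot both be $0$ (their sum is the identity) nor both be the identity (their sum would then be $2$, and $2 = 1$ would force $1 = 0$, contradicting connectedness), so exactly one is $0$ and the other is the identity. The summand cut out by the zero idempotent is the zero object, while the summand cut out by the identity is all of $x^{\otimes 2}$; hence one of $\Sym^2(x), \Lambda^2(x)$ vanishes and the other is $x^{\otimes 2}$, as claimed. The only place demanding genuine care is the first step: one must verify that $-\otimes \ell^*$ induces a composition-preserving isomorphism on endomorphism algebras, so that connectedness---a statement purely about idempotents---is faithfully transported from $\R(I,I)$ to $\R(x^{\otimes 2}, x^{\otimes 2})$. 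This holds because a $k$-linear functor preserves composition and the chosen isomorphism $x \otimes x^* \cong I$ conjugates one endomorphism algebra isomorphically onto the other; everything else is formal manipulation of idempotents.
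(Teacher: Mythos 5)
Your proposal is correct and follows essentially the same route as the paper: identify $x^{\otimes 2}$ as a line object, transfer connectedness to $\R(x^{\otimes 2}, x^{\otimes 2}) \cong \R(I,I)$ as $k$-algebras, and observe that the two complementary idempotents cutting out $\Sym^2(x)$ and $\Lambda^2(x)$ must then be $0$ and $1$ in some order. The only cosmetic differences are that you obtain the algebra isomorphism via the autoequivalence $-\otimes \ell^*$ where the paper uses $\R(\ell,\ell) \cong \R(I, \ell^*\otimes\ell) \cong \R(I,I)$, and that you write out the symmetrizer and antisymmetrizer explicitly.
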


\begin{proof}
    If $\ell$ is any line object in a 2-rig, then
    \[   \R(\ell,\ell) \cong \R(I, \ell^\ast \otimes \ell) \cong \R(I,I) \]
    and one can check that the isomorphism is not just one of vector spaces, but of $k$-algebras. Suppose $x$ is a line object. Then so is $x \otimes x$, since $x^\ast \otimes x^\ast$ is an inverse for $x \otimes x$. Thus, we have
    \[   \R(x^{\otimes 2}, x^{\otimes 2}) \cong \R(I,I) \]
    as $k$-algebras. If $\R$ is connected it follows that there are exactly two distinct idempotents in $\R(x^{\otimes 2}, x^{\otimes 2})$, namely $0$ and $1$. But there are two 
    idempotents coming from projection onto the two summands in
    \[   x^{\otimes 2} \cong \Sym^2(x) \oplus \Lambda^2(x) \]
    and these idempotents sum to the identity. Thus, one of $\Sym^2(x)$ and $\Lambda^2(x)$ must be zero, and the other must be isomorphic to $x^{\otimes 2}$.
\end{proof}

In a 2-rig, an object $x$ has $\Lambda^2(x) \cong 0$ if and only if the symmetry 
\[   \sigma_{x,x} \maps x \otimes x \to x \otimes x \]
is the identity, and $\Sym^2(x) \cong 0$ if and only if $\sigma_{x,x}$ is minus the identity. However, objects $x$ that satisfy $\Lambda^2(x) \cong 0$ or $S^2(x) \cong 0$ need not be line objects: for example, the initial object $0$ obviously satisfies both properties but is not invertible; more generally, any retract of an object satisfying one or the other property also satisfies that property but might not be invertible. In many of the examples of 2-rigs that can be considered `classical', like $\Rep_k(G)$ for a finite group $G$, or vector bundles over a space $X$, it happens that short exact sequences split, whence subobjects of line objects are indeed retracts, and therefore any subobject of a line object satisfying one of these two properties again satisfies that property. This partially justifies the following terminology. 

\begin{defn} 
    An object $x$ in a 2-rig is an \define{bosonic subline object} if $\sigma_{x,x} \maps x \otimes x \to x \otimes x$ is the identity, and a \define{fermionic subline object} if $\sigma_{x,x}$ is minus the identity. 
\end{defn}

\begin{defn} 
    An object in a 2-rig is an \define{bosonic line object} if it is a bosonic subline object and also a line object. Similarly, it is a \define{fermionic line object} if it is a fermionic subline object and also a line object.
\end{defn}

\noindent
In this language, \cref{prop:line_objects} says that in a connected 2-rig, every line object is either bosonic or fermionic, but not both.

We must warn against some traps. Not every subline object is a subobject of a line object: we shall see in Examples \ref{ex:bosonic_subline_counterexample} and \ref{ex:fermionic_subline_counterexample} that this fails in the free 2-rig on a bosonic or fermionic subline object. Further, not every subobject of a bosonic or fermionic subline object is another such subline object. For example, in the 2-rig of modules of the commutative algebra $k[x,y]$, and regarding $k[x,y]$ as a bosonic line object in the obvious way, the ideal $(x, y)$ regarded as a submodule of $k[x, y]$ is not a bosonic subline object: for the symmetry map $\sigma$ on its tensor square, $\sigma(x \otimes y) \neq x \otimes y$. 
 
Bosonic line and subline objects are common in mathematics. For example, in the category $\VB(X)$ of vector bundles on a topological space $X$ a bosonic line object is the same as a line bundle, while a bosonic subline object is the same as a subobject of a line bundle: that is, a vector bundle each whose fibers has dimension $0$ or $1$. In the category $\Rep(G)$ of finite-dimensional representations of a group, a bosonic line object is a 1-dimensional representation of $G$. For example, $\Rep(S_n)$ has two bosonic line objects whenever $n > 1$: the trivial representation and the sign representation.

Fermionic line objects and fermionic subline objects are a bit more esoteric: there are never any in $\VB(X)$ or $\Rep(G)$. They appear in `supermathematics', where we replace vector spaces by super vector spaces. A super vector space is simply a $\Z/2$-graded vector space, i.e., a vector space $V = V_0 \oplus V_1$ that is split into a `bosonic part' $V_0$ and a `fermionic part' $V_1$. We can define a category of super vector bundles $\S\VB(X)$ over a topological space $X$, where a super vector bundle is a vector bundle $E \to X$ equipped with a splitting $E \cong E_0 \oplus E_1$ into a bosonic and fermionic part, and a map is a vector bundle map preserving this splitting. This category $\S\VB(X)$ is a 2-rig in which the symmetry introduces a sign change when permuting two homogeneous elements that are both of odd degree. A super vector bundle $E$ is a line object if and only if all the fibers of $E$ are 1-dimensional. If all the fibers of $E_0$ are 1-dimensional, $E$ is a bosonic line object, but if all the fibers of $E_1$ are 1-dimensional, $E$ is a fermionic line object. If $X$ is not connected, there are also line objects in $\S\VB(X)$ that are neither bosonic nor fermionic. 

\section{The free 2-rig on a bosonic subline object}
\label{sec:free_on_bosonic_subline}

We now describe the free 2-rig on a bosonic subline object, which we call $\A$. We can guess what this should be. It should contain a bosonic subline object $s$ and its tensor powers $s^{\otimes n}$ for all integers $n \ge 0$. We expect that 
\[   \R(s^{\otimes m}, s^{\otimes n}) \cong 
\left\{ \begin{array}{ccl}
k & \text{if} & m = n \\
0 & \text{if} & m \ne n.
\end{array}
\right.
\]
with composition being multiplication in $k$. The monoidal structure should have
\[   s^{\otimes m} \otimes s^{\otimes n} \cong s^{\otimes (m+n)} \]
and the symmetry should behave in a trivial way, since
\[    \sigma_{s,s} \maps s \otimes s \to s \otimes s \]
must be the identity, given that the projection from $s^{\otimes 2}$ to $\Sym^2(s)$ is the identity.

Not all the objects in $\A$ will be of the form $s^{\otimes n}$, since a 2-rig must have finite direct sums, and all idempotents must split. However, once we adjoin finite direct sums of the objects $s^{\otimes n}$, all idempotents will split.

This leads us to the following simple definition of $\A$. As a category, it is the category of $\N$-graded vector spaces with finite total dimension, and linear maps preserving the grading. We give this category its usual linear structure. We also give it the usual monoidal structure, where
\[    (V \otimes W)_n = \bigoplus_{i+j = n} V_i \otimes W_j .\]
We let $s \in \A$ be the graded vector space with
\[   s_n = 
\left\{ \begin{array}{ccl}
k & \text{if} & n = 1 \\
0 & \text{if} & n \ne 1.
\end{array}
\right.
\]
There are two possible symmetries compatible with the monoidal structure described. For our purposes we need $s$ to be a bosonic subline object, so we need
\[    \sigma_{s, s} = 1_{s \otimes s} .\]
Thus, we use the symmetry on $\A$ where
\[    \sigma_{V,W} \maps V \otimes W \to W \otimes V \]
is defined using the usual symmetry in $\Vect$ on each homogeneous component:
\[    (V \otimes W)_n = \bigoplus_{i+j = n} V_i \otimes W_j \longrightarrow \bigoplus_{i+j = n} W_j \otimes V_i = (W \otimes V)_n .\]
In \cref{thm:free_2-rig_on_fermionic_subline} we discuss another choice of symmetry, which gives the free 2-rig on a fermionic subline object.

\begin{thm} 
\label{thm:free_2-rig_on_bosonic_subline}
    $\A$ is the free 2-rig on a bosonic subline object. That is, given a 2-rig $\R$ containing a bosonic subline object $x$, there is a map of 2-rigs $F \maps \A \to \R$ with $F(s) = x$, and $F$ is determined uniquely up to isomorphism by this property.
\end{thm}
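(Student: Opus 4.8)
The plan is to exhibit $\A$ as the output of the same three-stage free construction used for $\ksbar$, but with the symmetry of the generating object forced to be trivial. Recall that a 2-rig is obtained from a symmetric monoidal category by first freely adjoining $k$-linear combinations of morphisms and then Cauchy completing, and that each of these two operations is a free (left adjoint) construction. So it suffices to identify the free \emph{symmetric monoidal} category $\S_+$ on a bosonic subline object, and then check that applying $k$-linearization followed by Cauchy completion to $\S_+$ yields $\A$ with $s$ its degree-one generator. Since a composite of left adjoints is a left adjoint, this simultaneously gives existence and uniqueness of $F$ up to isomorphism.

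First I would compute $\S_+$. The free symmetric monoidal category on a single object $s$ is $\S$, the groupoid of finite sets and bijections, in which $\S(s^{\otimes m}, s^{\otimes n})$ is empty for $m \neq n$ and equal to $S_n$ for $m = n$, the permutations acting by reshuffling tensor factors. A bosonic subline object is an object satisfying the single equation $\sigma_{s,s} = 1_{s \otimes s}$, so $\S_+$ is the quotient of $\S$ imposing this relation together with its consequences under tensoring and whiskering. The key observation is that the adjacent transpositions obtained by whiskering $\sigma_{s,s}$ into $s^{\otimes n}$ generate all of $S_n$; hence setting $\sigma_{s,s} = 1$ collapses every $S_n$ to the trivial group. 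Therefore $\S_+$ is the discrete symmetric monoidal category whose objects form the monoid $(\N, +)$, with only identity morphisms and trivial symmetry. This is the step where the hypothesis ``bosonic'' does all the work, and I expect it to be the main obstacle: one must check that the quotient is \emph{exactly} this discrete category, i.e.\ that no nonidentity morphisms survive and that imposing the relation does not accidentally identify distinct objects.

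Next I would push $\S_+$ through the remaining two free constructions. Linearizing produces the $k$-linear symmetric monoidal category with the same objects and with hom-space $k$ when $m = n$ and $0$ otherwise, composition being multiplication in $k$; these are precisely the hom-spaces posited in the definition of $\A$ restricted to the objects $s^{\otimes n}$. Cauchy completing then freely adjoins finite biproducts and splits idempotents. Because each endomorphism algebra is just $k$ and so has no nontrivial idempotents, the completion adds nothing beyond finite direct sums $\bigoplus_n (s^{\otimes n})^{\oplus d_n}$, which are exactly the finite-dimensional $\N$-graded vector spaces; tracking the monoidal and symmetric structure through the two functors reproduces the formulas $(V \otimes W)_n = \bigoplus_{i+j=n} V_i \otimes W_j$ and the componentwise symmetry defining $\A$. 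This identifies $\A \simeq \overline{k\S_+}$ as Cauchy complete $k$-linear symmetric monoidal categories, with $s$ the degree-one object.

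Finally, the universal property of $\A$ is inherited from those of the three free constructions. Given a 2-rig $\R$ with a bosonic subline object $x$, the equation $\sigma_{x,x} = 1_{x \otimes x}$ is exactly the data of a symmetric monoidal functor $\S_+ \to \R$ sending $s$ to $x$; linearizing and extending over the Cauchy completion yields the 2-rig map $F \maps \A \to \R$ with $F(s) = x$, sending $s^{\otimes n} \mapsto x^{\otimes n}$ and acting on the block-diagonal morphisms of $\A$ by $k$-linearity. Uniqueness up to isomorphism is automatic from the same adjunctions: any 2-rig map $G$ with $G(s) = x$ must send $s^{\otimes n}$ to $x^{\otimes n}$ up to coherent isomorphism and is then pinned down on morphisms by $k$-linearity together with $\A(s^{\otimes n}, s^{\otimes n}) = k \cdot 1$, so $G \cong F$. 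As a sanity check one can alternatively define $F$ by hand via $s^{\otimes n} \mapsto x^{\otimes n}$ and verify directly that it is $k$-linear, monoidal, symmetric (the symmetry being preserved precisely because $x$ is bosonic), and Cauchy, bypassing the abstract machinery at the cost of more coherence bookkeeping.
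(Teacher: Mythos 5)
Your proof is correct, but it takes a genuinely different route from the paper's. The paper argues directly: it writes down the explicit formula $F(V) = \bigoplus_{n \ge 0} V_n \cdot x^{\otimes n}$, checks that this is a symmetric monoidal $k$-linear functor with $F(s) = x$, and establishes uniqueness by decomposing an arbitrary object of $\A$ as $\bigoplus_n i_\A(V_n) \otimes s^{\otimes n}$ and observing that preservation of coproducts, of tensor products, and of the initial 2-rig $\Fin\Vect$ forces this formula on any 2-rig map sending $s$ to $x$. You instead factor the construction through the free symmetric monoidal category on a bosonic subline object, identify it as the discrete monoid $(\N,+)$ --- the step where adjacent transpositions generating $S_n$ collapse all the morphisms of the free symmetric monoidal category on one object --- and then push through the two left 2-adjoints $k(-)$ and $\overline{(-)}$. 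This is sound, and in fact the paper sketches exactly your route in the remark immediately following its proof (``One can show that the free symmetric monoidal category on a bosonic subline object is the discrete category on $\N$\dots''), treating it as context rather than as the proof. Your approach buys a cleaner conceptual package: existence and uniqueness of $F$ fall out of the adjunctions at once, and it isolates precisely where the bosonic hypothesis does its work. The paper's approach buys the explicit formula for $F$ without having to compute a quotient symmetric monoidal category, and it transfers verbatim to the fermionic analogue (\cref{thm:free_2-rig_on_fermionic_subline}), where the relation $\sigma_{s,s} = -1_{s\otimes s}$ requires additive inverses of morphisms and so cannot be imposed before linearization, which would force a modification of your factorization.
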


Before giving the proof, we recall some notation. The category $\Fin\Vect$ of finite-dimensional vector spaces is the initial 2-rig, so for any 2-rig $\R$, we have a unique 2-rig map $i_\R \maps \Fin\Vect \to \R$. Given a vector space $V$ and an object $R$ of $\R$, we let $V \cdot R$ denote the tensor product $i_\R(V) \otimes R$. 

\begin{proof}
An object $V$ of $\A$ is given by its homogeneous components, $V= (V_n)_{n \geq 0}$. We define the 2-rig map $F$ by
\[
F(V) = \bigoplus_{n \geq 0} V_n \cdot x^{\otimes n}
\] 
In particular, $F(s) = x$, and it is straightforward to check that we have canonical isomorphisms 
\[
\begin{array}{ccl}
F(V \otimes W) & \cong & \displaystyle{ \sum_{n \geq 0} \left(\bigoplus_{j+k=n} V_j \otimes W_k\right) \cdot x^{\otimes n} } \\ \\
& \cong &  \displaystyle{  \left(\bigoplus_{j \geq 0} V_j \cdot x^{\otimes j}\right) \otimes \left(\bigoplus_{k \geq 0} W_k \cdot x^{\otimes k}\right) } \\ \\
& = & F(V) \otimes F(W)
\end{array}
\] 
making $F$ into a strong monoidal functor. One can check that $F$ is symmetric monoidal, and that $F$ is unique up a monoidal natural isomorphism. In other words, this definition of $F$ as a 2-rig map is forced on us: writing an arbitrary object of $\A$ as 
\[\bigoplus_{n \geq 0} V_n \cdot s^{\otimes n} \cong \bigoplus_{n \geq 0} i_\A(V_n) \otimes s^{\otimes n},
\] 
we have  
\begin{align*}
    F\left(\bigoplus_{n \geq0} i_\A(V_n) \otimes s^{\otimes n}\right) 
    &\cong \bigoplus_{n \geq 0} F(i_\A(V_n) \otimes s^{\otimes n}) 
    & F \text{ preserves coproducts}
    \\& \cong \bigoplus_{n \geq 0} F(i_\A(V_n)) \otimes F(s)^{\otimes n}
    & F \text{ preserves tensor products}
    \\& \cong \bigoplus_{n \geq 0} i_\R(V_n) \otimes F(s)^{\otimes n} 
    & \text{$\Fin\Vect$ is the initial 2-rig} 
    \\& \cong \bigoplus_{n \geq 0} V_n \cdot x^{\otimes n} & F(s) = x.
\end{align*}
\end{proof}

While this result is straightforward, it is interesting to set it in a larger context. The concept of `bosonic subline object' makes sense in any symmetric monoidal category: it is an object $x$ with $\sigma_{x,x} = 1_{x \otimes x}$. One can show that the free symmetric monoidal category on a bosonic subline object is the discrete category on $\N$ with addition as its monoid operation. We call this symmetric monoidal category simply $\N$. Then, to obtain $\A$, we can apply two left 2-adjoints introduced in \cite[Thm.\ 3.5]{Schur}:
\[
\begin{tikzcd}
    &
    \SMC
    \arrow[r, bend left, "k(-)"]
    \arrow[r, phantom, "\bot"]
    &
    \SMLin
    \arrow[l, bend left, "U_1"]
    \arrow[r, bend left, "\overline{(-)}"]
    \arrow[r, phantom, "\bot", pos = 0.4]
    &
    \TRig
    \arrow[l, bend left, "U_2", pos = 0.45]
\end{tikzcd}
\]
The functor $k(-) \maps \SMC \to \SMLin$ performs base change, freely turning any symmetric monoidal category $\C$ into a symmetric monoidal $k$-linear category $k(\C)$, which has the same objects as $\C$ but with hom-spaces being the the free vector spaces on the homsets of $\C$. The functor $\overline{(-)} \maps \SMLin \to \TRig$ performs Cauchy completion, freely endowing any symmetric monoidal $k$-linear category with absolute colimits. Given that $\N \in \SMC$ is the free symmetric monoidal category on a bosonic line object, it follows that $\overline{k\Z}$ is the free 2-rig on a bosonic line object. One can then  check that $\overline{k\Z}$ is equivalent, as a 2-rig, to $\A$.

Finally, we note a counterexample:

\begin{example}
\label{ex:bosonic_subline_counterexample}
Not every bosonic subline object is a subobject of a line object. For since the tensor product in $\A$ is the usual tensor product of $\N$-graded vector spaces, the only line object in $\A$ is the tensor unit $I$. Thus, the bosonic subline object $s \in \A$ is not a subobject of any line object.
\end{example}

\section{The free 2-rig on a bosonic line object}
\label{sec:free_on_bosonic_line}

Having described the free 2-rig on a bosonic subline object, it is easy to adapt our treatment to describe the free 2-rig on a bosonic line object, which we call $\T$. This should contain a bosonic line object $\ell$ and all its tensor powers $\ell^{\otimes n}$, but because a line object has an inverse object, $n$ can now be negative as well as positive.

Thus, we define $\T$ to be the category of $\Z$-graded vector spaces of finite total dimension. We give this category its usual linear structure and its usual monoidal structure, where
\[    (V \otimes W)_n = \bigoplus_{i+j = n} V_i \otimes W_j .\]
We let $\ell \in \T$ be the graded vector space with
\[   \ell_n = 
\left\{ \begin{array}{ccl}
k & \text{if} & n = 1 \\
0 & \text{if} & n \ne 1.
\end{array}
\right.
\]
As before, there are two possible symmetries compatible with the monoidal structure. To ensure that $\ell$ is a bosonic line object, we use the symmetry
\[    \sigma_{V,W} \maps V \otimes W \to W \otimes V \]
defined using the usual symmetry in $\Vect$ on each homogeneous component:
\[    (V \otimes W)_n = \bigoplus_{i+j = n} V_i \otimes W_j \longrightarrow \bigoplus_{i+j = n} W_j \otimes V_i = (W \otimes V)_n .\]

\begin{thm} 
\label{thm:free_2-rig_on_bosonic_line}
    $\T$ is the free 2-rig on a bosonic line object. That is, given a 2-rig $\R$ containing a bosonic line object $x$, there is a map of 2-rigs $F \maps \T \to \R$ with $F(\ell) = x$, and $F$ is determined uniquely up to natural isomorphism by this property.
\end{thm}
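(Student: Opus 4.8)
The plan is to follow the proof of \cref{thm:free_2-rig_on_bosonic_subline} almost verbatim, replacing the grading index set $\N$ by $\Z$ and handling the negative tensor powers that a genuine line object, as opposed to a mere subline object, makes available. First, since $x$ is a line object in $\R$ it has an inverse $x^\ast$ with $x \otimes x^\ast \cong I$, and the coherent $\ev$ and $\coev$ maps recalled in \cref{sec:line} let us define $x^{\otimes n}$ for every $n \in \Z$: positive tensor powers of $x$ for $n > 0$, tensor powers of $x^\ast$ for $n < 0$, and $I$ for $n = 0$, together with coherent isomorphisms $x^{\otimes m} \otimes x^{\otimes n} \cong x^{\otimes (m+n)}$ for all $m, n \in \Z$. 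Since an object $V = (V_n)_{n \in \Z}$ of $\T$ has only finitely many nonzero homogeneous components, I would then set
\[ F(V) = \bigoplus_{n \in \Z} V_n \cdot x^{\otimes n}, \]
a finite direct sum, so that in particular $F(\ell) = x$.

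Next I would verify, exactly as in the subline case, that $F$ is $k$-linear and strong monoidal. The convolution formula $(V \otimes W)_n = \bigoplus_{i+j=n} V_i \otimes W_j$ combined with the isomorphisms $x^{\otimes i} \otimes x^{\otimes j} \cong x^{\otimes(i+j)}$ gives canonical isomorphisms $F(V \otimes W) \cong F(V) \otimes F(W)$ and $F(I) \cong I$. Because the symmetry on $\T$ is the unsigned one built from the symmetry of $\Vect$ on each homogeneous component, and because $\sigma_{x,x} = 1_{x \otimes x}$ by hypothesis, one checks that these isomorphisms are compatible with the symmetries, so $F$ is symmetric monoidal. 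A $k$-linear symmetric monoidal functor between 2-rigs automatically preserves the absolute (Cauchy) colimits that a 2-rig carries, so $F$ is a map of 2-rigs.

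For uniqueness, given any 2-rig map $G \maps \T \to \R$ with $G(\ell) = x$, I would write an arbitrary object as $V \cong \bigoplus_{n \in \Z} i_\T(V_n) \otimes \ell^{\otimes n}$ and run the same chain of isomorphisms as in the proof of \cref{thm:free_2-rig_on_bosonic_subline}: $G$ preserves coproducts and tensor products, and sends $i_\T(V_n)$ to $i_\R(V_n)$ since $\Fin\Vect$ is the initial 2-rig, yielding $G(V) \cong \bigoplus_n V_n \cdot x^{\otimes n} \cong F(V)$.

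The one genuinely new point, and the only place where the line/subline distinction is felt, concerns the negative-degree terms. Here I would use that a strong monoidal functor preserves inverses of invertible objects, so that $G(\ell^\ast) \cong G(\ell)^\ast \cong x^\ast$ and hence $G(\ell^{\otimes n}) \cong x^{\otimes n}$ for $n < 0$ as well. This preservation of inverses, together with the coherence of the chosen family of isomorphisms $x^{\otimes m} \otimes x^{\otimes n} \cong x^{\otimes(m+n)}$ across all integer exponents, is the only step requiring more care than the $\N$-graded argument; everything else is a routine transcription, so I expect no serious obstacle.
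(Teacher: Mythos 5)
Your proposal is correct and follows essentially the same route as the paper: define $F(V) = \bigoplus_{n \in \Z} V_n \cdot x^{\otimes n}$ with negative powers interpreted via $x^\ast$, check strong symmetric monoidality from the convolution formula, and establish uniqueness by the same chain of isomorphisms as in the subline case. Your explicit observation that a strong monoidal functor preserves inverses of invertible objects, so that $G(\ell^{\otimes n}) \cong x^{\otimes n}$ also for $n<0$, is the one point the paper leaves implicit, and it is a worthwhile addition.
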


\begin{proof}
An object $V$ of $\T$ is given by its homogeneous components, $V= (V_n)_{n \in \Z}$. We define the 2-rig map $F$ by
\[
F(V) = \bigoplus_{n \in \Z} V_n \cdot x^{\otimes n}
\] 
where a negative tensor power of $x$ is defined to be the corresponding positive tensor power of the inverse object $x^*$.
In particular, $F(\ell) = x$, and as in the proof of \cref{thm:free_2-rig_on_bosonic_subline} there are canonical isomorphisms 
\[   F(V \otimes W) \stackrel{\sim}{\longrightarrow} F(V) \otimes F(W)
\] 
making $F$ into a strong monoidal functor. One can check that $F$ is symmetric monoidal, and also that $F$ is unique up a monoidal natural isomorphism:
\begin{align*}
    F\left(\bigoplus_{n \in \Z} i_\T(V_n) \otimes \ell^{\otimes n}\right) 
    &\cong \bigoplus_{n \in \Z} F(i_\T(V_n) \otimes \ell^{\otimes n}) 
    & F \text{ preserves coproducts}
    \\& \cong \bigoplus_{n \in \Z} F(i_\T(V_n)) \otimes F(\ell)^{\otimes n}
    & F \text{ preserves tensor products}
    \\& \cong \bigoplus_{n \in \Z} i_\R(V_n) \otimes F(\ell)^{\otimes n} 
    & \text{$\Fin\Vect$ is the initial 2-rig} 
    \\& \cong \bigoplus_{n \in \Z} V_n \cdot x^{\otimes n} & F(\ell) = x.
\end{align*}
\end{proof}

\section{Algebraic representations}

We shall be studying 2-rigs of representations of various monoids, such as the monoid of $n \times n$ matrices, the group of invertible $n \times n$ matrices, and so on. However, we will typically not consider general representations, only so-called `algebraic' ones. This restriction lets us avoid the vast wilderness of representations that arise from automorphisms of the ground field $k$. For example, the multiplicative monoid of $k$ has a 1-dimensional representation coming from any automorphism of the field $k$, but only for the identity automorphsm is this representation algebraic.

To define this concept of `algebraic' representation, we use the fact that the monoids we are considering are actually monoid objects in the category of affine schemes. Following Milne \cite[Chap.\ I.4]{MilneAffGpSch} we call such things `affine monoids'.

\begin{defn} The category of \define{affine schemes} over $k$, $\Aff$, is the opposite of the category $\Comm\Alg$ of commutative algebras over $k$. Since $\Aff$ has cartesian products we can define monoids internal to it, and an \define{affine monoid} is a monoid internal to $\Aff$. The
\end{defn}

\begin{lem}
\label{lem:commutative_bialgebras_as_affine_monoids}
The category of affine monoids is equivalent to the category of commutative bialgebras over $k$.
\end{lem}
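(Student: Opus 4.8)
The plan is to reduce the statement to a purely formal manipulation: transport the notion of a monoid object across the defining anti-equivalence $\Aff = \Comm\Alg\op$, using the general principle that a monoid object in a monoidal category $\C$ is the same thing as a comonoid object in $\C\op$ equipped with the opposite monoidal structure. Applied here, with $\C = \Aff$ carrying its cartesian monoidal structure $(\times, 1)$, this says that an affine monoid is precisely a comonoid in $\Comm\Alg$ with respect to whatever monoidal structure on $\Comm\Alg$ is opposite to the cartesian one on $\Aff$. Identifying that structure and recognizing the resulting comonoids as bialgebras is then all that remains.

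First I would pin down the relevant monoidal structure on $\Comm\Alg$. Because $\Aff = \Comm\Alg\op$, the terminal object $1 \in \Aff$ is the initial commutative algebra $k$, and the binary product $X \times Y$ in $\Aff$ is the binary coproduct of the corresponding algebras in $\Comm\Alg$. The key point with actual content is that this coproduct is the tensor product $A \otimes_k B$, with coprojections $a \mapsto a \otimes 1$ and $b \mapsto 1 \otimes b$: indeed, an algebra map $A \otimes_k B \to C$ amounts to a pair of algebra maps $A \to C$ and $B \to C$ whose images commute, and in the \emph{commutative} setting the commuting condition is automatic, so $\otimes_k$ satisfies exactly the universal property of the coproduct. (Commutativity is essential here; for general associative algebras the coproduct is the free product instead.) Thus the cartesian structure $(\Aff, \times, 1)$ corresponds under the anti-equivalence to the cocartesian structure $(\Comm\Alg, \otimes_k, k)$.

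Next I would read off the comonoid data. Writing $A$ for the commutative algebra corresponding to an affine scheme $M$, the multiplication $m \maps M \times M \to M$ and unit $e \maps 1 \to M$ dualize to algebra homomorphisms $\Delta \maps A \to A \otimes_k A$ and $\varepsilon \maps A \to k$, and contravariance sends the associativity diagram for $m$ to the coassociativity diagram for $\Delta$ and the unit diagrams for $e$ to the counit diagrams for $\varepsilon$. A comonoid in $(\Comm\Alg, \otimes_k, k)$ is therefore a commutative algebra $A$ together with algebra maps $\Delta, \varepsilon$ satisfying coassociativity and the counit laws; but the requirement that $\Delta$ and $\varepsilon$ be \emph{algebra} homomorphisms is precisely the compatibility between the multiplication/unit and the comultiplication/counit that defines a bialgebra. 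Hence such a comonoid is exactly a bialgebra whose underlying algebra is commutative. Since $\mathcal{O} \maps \Aff \to \Comm\Alg$ is a (contravariant) equivalence, a monoid homomorphism in $\Aff$ dualizes to an algebra map commuting with $\Delta$ and $\varepsilon$, i.e.\ a bialgebra homomorphism, and conversely; this naturality upgrades the object-level bijection to the asserted equivalence of categories.

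The only step I expect to carry real mathematical weight, as opposed to formal bookkeeping, is the identification in the second paragraph that the categorical product in $\Aff$ is computed by $\otimes_k$ with its canonical structure maps (equivalently, that $\otimes_k$ is the coproduct in $\Comm\Alg$), together with the dual fact that $\Spec k$ is terminal. Once this is in hand, dualizing the monoid diagrams into comonoid diagrams is automatic from contravariance, and the final recognition of a comonoid in $(\Comm\Alg, \otimes_k, k)$ as a commutative bialgebra is just an unwinding of definitions.
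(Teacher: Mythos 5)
Your proposal is correct and follows exactly the paper's own argument: identify the cartesian structure on $\Aff$ with the cocartesian structure $(\Comm\Alg,\otimes_k,k)$, dualize monoids to comonoids, and recognize a comonoid in commutative algebras as a commutative bialgebra. The paper states this in two sentences; you have simply filled in the supporting details (why $\otimes_k$ is the coproduct of commutative algebras, and why algebra-homomorphism comonoid data is the same as bialgebra compatibility), all of which are accurate.
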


\begin{proof}
An affine monoid is a monoid in $(\Aff, \times) \simeq (\Comm\Alg, \otimes)\op$, where $\otimes$ denotes the tensor product of commutative algebras over $k$, which is the coproduct in $\Comm\Alg$. But a monoid in $(\Comm\Alg, \otimes)\op$ is the same as a comonoid in $(\Comm\Alg, \otimes)$, which is a commutative bialgebra over $k$. 
\end{proof}

\begin{defn}
\label{defn:coordinate_algebra}
If $M$ is an affine monoid, we call the corresponding commutative bialgebra its \define{coordinate bialgebra} $\mathcal{O}(M)$.
\end{defn}

\begin{example}
\label{ex:commutative_monoids_as_affine_monoids}
Any commutative monoid $M$ gives rise to an affine monoid $\Spec(kM)$ whose coordinate bialgebra is the monoid algebra $kM$. In more detail, the free vector space functor 
\[
    \begin{array}{ccl} \Set &\to& \Vect \\
    X &\mapsto& kX \end{array}
\]
is strong symmetric monoidal, hence takes cocommutative comonoids in $(\Set, \times)$, which are simply sets, to cocommutative comonoids in $(\Vect, \otimes)$, which are cocommutative coalgebras. By the same reasoning, it takes bicommutative bimonoids in $(\Set, \times)$, which are the same as commutative monoids $M$, to bicommutative bimonoids $kM$ in $(\Vect, \otimes)$, which are bicommutative bialgebras. We have 
\[
    k(M \times N) \cong kM \otimes kN
\]
where the right side is the coproduct in the category of bicommutative bialgebras. Meanwhile, $M \times N$ is the biproduct, hence coproduct, of the commutative monoids $M, N$. Thus we have a coproduct preserving functor $M \mapsto kM$ from commutative monoids to (bi)commutative bialgebras. Taking opposites of categories, where commutative bialgebras are opposite to affine monoids, the induced functor 
\[
    \Spec(k-) \maps \mathsf{CMon}\op \to \mathsf{AffMon}
\]
preserves products. 
\end{example}

\begin{example} 
\label{ex:algebras_as_affine_monoids}
Any finite-dimensional algebra over $k$ gives rise to an affine monoid. We shall need more general facts of a related nature, so it is worth going into some detail here. First notice that there is a symmetric lax monoidal functor
\[   \Phi \maps (\Fin\Vect, \otimes) \to (\Aff, \times) \]
given as the composite
\[    (\Fin\Vect, \otimes) \xrightarrow{(-)^\ast} 
(\Fin\Vect, \otimes)\op \xrightarrow{S\op} (\Comm\Alg, \otimes)\op = (\Aff, \times) \]
In the first step, taking the dual is a symmetric strong monoidal functor from $(\Fin\Vect, \otimes)$ to $ (\Fin\Vect, \otimes)\op$. In the second step, $\Sym \maps (\Fin\Vect, \otimes) \to (\Comm\Alg, \otimes)$ sends any finite-dimensional vector space $V$ to the free commutative algebra on $V$, also known as the symmetric algebra $\Sym(V)$. Being left adjoint to the forgetful functor $U \maps (\Comm\Alg, \otimes) \to (\Fin\Vect, \otimes)$ which is strong symmetric monoidal, $\Sym$ is symmetric oplax monoidal, so $\Sym\op \maps (\Fin\Vect, \otimes)\op \to (\Comm\Alg, \otimes)\op$ is symmetric lax monoidal. 

Since $\Phi$ is symmetric lax monoidal we can use it to convert monoid objects in $(\Fin\Vect,\otimes)$, which are simply finite-dimensional algebras, into affine monoids.
\end{example}

More generally we can use the symmetric lax monoidal functor
\[   \Phi \maps (\Fin\Vect, \otimes) \to (\Aff, \times) \]
to convert categories enriched in finite-dimensional vector spaces into categories enriched in affine schemes.

\begin{defn} 
\label{defn:finite-dim_linear_category}
    Let $\FFin\Lin\CCat$ be the 2-category of categories, functors and natural transformations enriched over $\Fin\Vect$. We call these \define{finite-dimensional linear} categories, \define{linear} functors and natural transformations. 
\end{defn}

\begin{defn}
\label{defn:algebraic_category}
    Let $\AAff\CCat$ be the 2-category of categories, functors and natural transformations enriched over $\Aff$. We call these \define{affine categories}, \define{algebraic functors} and natural transformations.
\end{defn}

The following lemma is then a routine consequence of the theory of base change for enriched categories \cite{Kelly}:

\begin{lem}
\label{lem:base_change}
    Base change along $\Phi$ gives a 2-functor
    \[   (-)^\sim \, \maps
    \FFin\Lin\CCat \to \AAff\CCat \]
    sending any finite-dimensional linear category $\C$ to the affine category $\C^\sim$ with the same objects, and with hom-objects defined by 
    \[   \C^\sim(x,y) = \Phi(\C(x,y)),\]
    and composition and units defined using the functoriality of $\Phi$.
\end{lem}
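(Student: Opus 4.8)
The plan is to recognize this lemma as a direct instance of the standard base-change 2-functor for enriched categories, so that essentially all the work has already been done in establishing that $\Phi$ is lax monoidal. Recall the general fact (as in \cite{Kelly}): any lax monoidal functor $\Phi \maps \V \to \W$ between monoidal categories, with coherence maps $\phi_{A,B} \maps \Phi(A) \otimes_\W \Phi(B) \to \Phi(A \otimes_\V B)$ and $\phi_0 \maps I_\W \to \Phi(I_\V)$, induces a 2-functor $\Phi_\ast \maps \V\text{-}\CCat \to \W\text{-}\CCat$. Applying this with $\V = (\Fin\Vect, \otimes)$ and $\W = (\Aff, \times)$ gives exactly the 2-functor $(-)^\sim$ claimed, provided we first confirm that $\Phi$ is lax monoidal---which is precisely what \cref{ex:algebras_as_affine_monoids} establishes. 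Only the lax structure is needed here, even though $\Phi$ is in fact symmetric lax monoidal.

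So the first step is to spell out the construction on objects. Given a finite-dimensional linear category $\C$, define $\C^\sim$ to have the same objects and hom-objects $\C^\sim(x,y) = \Phi(\C(x,y))$. Its composition is the $\W$-morphism obtained by composing the lax structure map with $\Phi$ applied to the composition of $\C$:
\[
    \Phi(\C(y,z)) \times \Phi(\C(x,y)) \xrightarrow{\phi} \Phi\bigl(\C(y,z) \otimes \C(x,y)\bigr) \xrightarrow{\Phi(\circ)} \Phi(\C(x,z)),
\]
and its identities are obtained from $\phi_0 \maps I_\W \to \Phi(I_\V)$ followed by $\Phi$ applied to the identity-selecting map $I_\V \to \C(x,x)$ of $\C$. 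It is worth noting that it is the \emph{lax} (rather than oplax) direction of $\Phi$ that makes the composition arrows point the correct way; this is exactly why $\Phi$ was built from $\Sym\op$ in \cref{ex:algebras_as_affine_monoids}. The associativity and unit axioms for $\C^\sim$ then follow by pasting the lax monoidal coherence diagrams for $\Phi$ against the associativity and unit axioms of $\C$.

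The second step is to extend this to 1-cells and 2-cells. A linear functor $F \maps \C \to \D$ is sent to the algebraic functor $F^\sim$ agreeing with $F$ on objects and acting on hom-objects by $\Phi(F_{x,y}) \maps \Phi(\C(x,y)) \to \Phi(\D(Fx,Fy))$; compatibility with composition and units is immediate from naturality of $\phi$ and $\phi_0$. A linear natural transformation $\alpha \maps F \To G$, with components $\alpha_x \maps I_\V \to \D(Fx,Gx)$, is sent to the affine natural transformation with components $\Phi(\alpha_x) \circ \phi_0 \maps I_\W \to \Phi(\D(Fx,Gx))$; the $\W$-naturality square follows again from the coherence of $\phi$. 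Finally one checks that $(-)^\sim$ preserves identity 1-cells, horizontal and vertical composition of 2-cells, and whiskering---all of which reduce to functoriality of $\Phi$ together with the same coherence identities.

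Since this is genuinely routine, there is no real obstacle; the only point requiring care, and the reason the lemma is not entirely formal, is verifying the coherence pastings in the correct direction, together with the prior identification of $\Phi$ as a \emph{lax} (not oplax or strong) monoidal functor. Given \cref{ex:algebras_as_affine_monoids}, the remainder is the standard base-change argument.
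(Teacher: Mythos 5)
Your proposal is correct and matches the paper's approach exactly: the paper offers no written proof, simply asserting that the lemma is ``a routine consequence of the theory of base change for enriched categories'' with a citation to Kelly, and your argument is precisely that standard base-change construction (composition via the laxator followed by $\Phi$ of composition, identities via $\phi_0$, hom-wise application of $\Phi$ on 1-cells, and $\Phi(\alpha_x)\circ\phi_0$ on 2-cells) instantiated at the lax monoidal $\Phi$ from \cref{ex:algebras_as_affine_monoids}. Your remark that only the lax (not symmetric or strong) structure is needed, and that the lax direction is what makes the composition arrows point correctly, is an accurate and worthwhile observation.
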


Crudely put, the hom-sets of $\C^\sim$ are `the same' as those of $\C$, but instead of treating them as vector spaces we treat them as affine schemes, which gives them greater flexibility: now we allow not just linear maps between them, but maps defined by arbitrary polynomials in any linear coordinates on these spaces. In particular, a one-object linear category $\C$ is just a way of thinking about an algebra over $k$, and the one-object affine category $\C^\sim$ is a way of thinking about the corresponding affine monoid.

\begin{defn} 
\label{defn:algebraic_representation}
Given an affine category $\C$ let 
\[   \Rep(\C) = \AAff\CCat(\C, \Fin\Vect^\sim). \]
The objects of $\Rep(\A)$ are algebraic functors $F \maps \C \to \Fin\Vect^\sim$, which we call \define{algebraic representations} of $\A$, and the morphisms are natural transformations between these. 
\end{defn}

We are especially interested in representations of affine monoids, which can be seen as one-object affine categories. Four examples play a major role in what follows. Two arise from algebras over $k$:

\begin{example}  
\label{ex:MNk}
Let $\M(N,k)$ be the affine monoid arising from the algebra of $N \times N$ matrices over $k$. The coordinate bialgebra of this affine monoid is the polynomial algebra on elements $e_{ij}$ ($1 \le i, j \le N$) with comultiplication 
\[   \Delta(e_{ij}) = \sum_{k = 1}^N e_{ik} \otimes e_{kj} .\] 

This example also has a useful basis-independent description. Let $V$ be a finite-dimensional vector space. The monoid (i.e.\ $k$-algebra) $\Fin\Vect(V, V)$ can be regarded as the dual $\Fin\Vect(V \otimes V^\ast, k)$ of a comonoid structure on $V \otimes V^\ast$. The comultiplication on $V \otimes V^\ast$ is given by a composite 
\[
V \otimes V^\ast \cong V \otimes k \otimes V^\ast \xrightarrow{1 \otimes \eta \otimes 1} V \otimes V^\ast \otimes V \otimes V^\ast
\]
with $\eta$ a canonical map of the form $k \to \hom(V, V) \cong V^\ast \otimes V$, where $k \to \hom(V, V)$ takes $1 \in k$ to $1_V \maps V \to V$. The element $\eta(1)$ is $\sum_{k = 1}^N f^k \otimes e_k \in V^\ast \otimes V$ where $e_1, \ldots, e_N$ is any basis of $V$ and $f^1, \ldots, f^N$ is the dual basis (but $\eta(1)$ itself is independent of basis). Note that the `matrix element' $e_{ij}$ in the prior description of the comultiplication corresponds to $e_i \otimes f^j$; comultiplication on the commutative bialgebra $\mathcal{O}(\M(N, k))$ (now recast as $S(V \otimes V^\ast)$), as described above, takes 
\[
e_i \otimes f^j \mapsto e_i \otimes \eta(1) \otimes f^j = \sum_{k=1}^N e_i \otimes f^k \otimes e_k \otimes f^j
\]
but in basis-free form, this is just $v \otimes f \mapsto v \otimes \eta(1) \otimes f$. 

In this way, we may speak directly of the affine monoid $\hom(V, V)^\sim$ for a finite-dimensional vector space $V$.
\end{example}

\begin{example}  
\label{ex:M_1}
When $N = 1$ we call the affine monoid $\M(N,k)$ simply $k$, since it arises by applying $\Phi$ to the 1-dimensional algebra $k$. We thus define $\Rep(k)$ to be $\Rep(\M(N,k))$ for $N = 1$. This affine monoid $k$ is also $\Spec(k \N)$ as defined in  \cref{ex:commutative_monoids_as_affine_monoids}.
\end{example}

Two other examples arise from affine groups:

\begin{example}
\label{ex:GL}
There is a well-known way to treat the group $\GL(N,k)$ of invertible $N \times N$ matrices as an affine group \cite{MilneAlgGp}, and we use this to define $\Rep(\GL(N,k))$. 
\end{example}

\begin{example}
\label{ex:GL_1}
When $N = 1$ we call the affine group $\GL(N,k)$ simply $k^\ast$, and we use this to define $\Rep(k^\ast)$. This affine monoid $k^\ast$ is also $\Spec(k \Z)$ as defined in  \cref{ex:commutative_monoids_as_affine_monoids}.
\end{example} 

For any affine monoid $M$ it is useful to have concrete descriptions of its algebraic representations in terms of its coordinate bialgebra.

\begin{lem}
\label{lem:comodule}
The category $\Rep(M)$ of algebraic representations of an affine monoid $M$ is equivalent to the category of finite-dimensional comodules of its coordinate bialgebra $\mathcal{O}(M)$, and the usual tensor product of comodules makes $\Rep(M)$ into a 2-rig.
\end{lem}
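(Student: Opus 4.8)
The plan is to unwind \cref{defn:algebraic_representation} for a one-object affine category, recognize the resulting data as a comodule using the adjunction defining $\Phi$, and then verify directly that the tensor product of comodules satisfies the three axioms of a 2-rig. I would begin with the equivalence on objects. An algebraic representation is an $\Aff$-enriched functor $F \maps M \to \Fin\Vect^\sim$ out of the one-object affine category $M$, so it selects a finite-dimensional vector space $V = F(\ast)$ together with a morphism of affine schemes $M \to \Fin\Vect^\sim(V,V) = \Phi(\hom(V,V))$, using \cref{lem:base_change}. Since $\Phi(\hom(V,V)) = \Spec(\Sym(\hom(V,V)^\ast))$ by the definition of $\Phi$ in \cref{ex:algebras_as_affine_monoids}, dualizing and invoking the freeness of the symmetric algebra (the adjunction $\Sym \dashv U$) turns this morphism into a linear map $\hom(V,V)^\ast \to \mathcal{O}(M)$. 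Finite-dimensionality gives $\hom(V,V)^\ast \cong V^\ast \otimes V$, so the map transposes to a linear map $\rho \maps V \to V \otimes \mathcal{O}(M)$, the candidate coaction.

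Next I would match the structural conditions. The composition law of $M$ as a one-object affine category is the monoid multiplication, which dualizes to the comultiplication $\Delta$ on $\mathcal{O}(M)$, while composition in $\Fin\Vect^\sim$ at $V$ arises, through the lax structure of $\Phi$, from composition of endomorphisms. Chasing the functoriality square for $F$ through the same chain of dualities should turn ``$F$ preserves composition'' into the coassociativity axiom $(\rho \otimes 1)\rho = (1 \otimes \Delta)\rho$ and ``$F$ preserves identities'' into the counit axiom $(1 \otimes \epsilon)\rho = 1_V$. For morphisms, an $\Aff$-enriched natural transformation $F \To G$ is a $k$-point of $\Phi(\hom(V,W))$, i.e.\ an ordinary linear map $V \to W$, and its naturality condition is exactly the requirement that it intertwine the two coactions. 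Together these give a fully faithful, essentially surjective functor from $\Rep(M)$ to the category of finite-dimensional $\mathcal{O}(M)$-comodules.

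For the 2-rig structure I would put the full bialgebra structure of $\mathcal{O}(M)$ to work. The multiplication of $\mathcal{O}(M)$ equips $V \otimes W$ with the standard tensor-product coaction, and the bialgebra axioms are precisely what make this coassociative and counital and make the unit $k$ (with coaction through the algebra unit) a two-sided unit, giving a monoidal structure; the commutativity of $\mathcal{O}(M)$, guaranteed by \cref{lem:commutative_bialgebras_as_affine_monoids}, is exactly what promotes the $\Vect$-symmetry $V \otimes W \cong W \otimes V$ to a comodule map and hence yields a symmetric monoidal structure. The category is $k$-linear since comodule maps form $k$-subspaces of the corresponding hom-spaces in $\Fin\Vect$ with bilinear composition. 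Cauchy completeness follows because the forgetful functor to $\Fin\Vect$ creates finite biproducts and splittings of idempotents, both absolute colimits: a biproduct carries the direct-sum coaction, and an idempotent comodule endomorphism $e$ splits $V$ into the subcomodules $\im e$ and $\ker e$ because $e$ intertwines the coaction. As $\Fin\Vect$ is Cauchy complete, so is $\Rep(M)$.

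The hard part will be bookkeeping rather than conceptual: confirming that enriched functoriality and naturality translate on the nose into the coassociativity, counit, and intertwining conditions, since each must be chased through the composite of three passages---$(-)^\ast$, the adjunction $\Sym \dashv U$, and finite-dimensional transposition---while checking that the lax monoidal structure of $\Phi$ reproduces the comultiplication of $\mathcal{O}(M)$ exactly. Once that dictionary is fixed, the remainder is the standard theory of comodules over a commutative bialgebra.
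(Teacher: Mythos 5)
Your proposal is correct and follows essentially the same route as the paper's proof: both convert an affine monoid map $M \to \hom(V,V)^\sim$ into a coaction $V \to V \otimes \mathcal{O}(M)$ via the adjunction $\Sym \dashv U$ and the identification $\hom(V,V)^\ast \cong V^\ast \otimes V$, and both obtain the symmetric monoidal structure from the commutative multiplication of $\mathcal{O}(M)$. Your version merely spells out some steps the paper leaves implicit, such as the verification of Cauchy completeness and the translation of enriched functoriality into the coassociativity and counit axioms (which the paper shortcuts by observing that the map $V \otimes V^\ast \to U(\mathcal{O}(M))$ is a map of comonoids).
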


\begin{proof}
An algebraic representation of $M$ on a finite-dimensional vector space $V$ is an affine monoid map $M \to \hom(V, V)^\sim$ where the affine monoid $\hom(V, V)^\sim$ was described in \cref{ex:MNk}. Any such affine monoid map is the same as a map of commutative bialgebras from $S(V \otimes V^\ast)$ to $\mathcal{O}(M)$. This gives a natural transformation of monoid-valued functors
\[
\Comm\Alg(\mathcal{O}(M), -) \to \Comm\Alg(\Sym(V \otimes V^\ast), -)
\] 
By the adjunction $\Sym \dashv U$ in the 2-category of symmetric monoidal categories and oplax symmetric monoidal functors, this corresponds to a comonoid map 
\[
V \otimes V^\ast \to U(\mathcal{O}(M))
\] 
in $\Fin\Vect$, i.e., to a comodule structure $\eta: V \to V \otimes U(\mathcal{O}(M))$ over the underlying coalgebra of $\mathcal{O}(M)$. Thus, all such categories $\Rep(M)$ may be regarded as categories of comodules over coalgebras. Compare also
\cite[Sec.\ VIII.6]{MilneAffGpSch}.

Comodule categories are evidently Cauchy complete linear categories. The tensor product on $\Rep(M)$ involves the full commutative bialgebra structure on $\mathcal{O}(M)$: if $(V, \eta)$ and $(W, \theta)$ are comodules, then their tensor product is the vector space $V \otimes W$ equipped with the comodule structure given by the evident composite 
\[
V \otimes W \xrightarrow{\eta \otimes \theta} V \otimes \mathcal{O}(M) \otimes W \otimes \mathcal{O}(M) \cong V \otimes W \otimes \mathcal{O}(M) \otimes \mathcal{O}(M) \xrightarrow{1 \otimes 1 \otimes m} V \otimes W \otimes \mathcal{O}(M)
\] 
where $m$ denotes the algebra multiplication. Commutativity of $m$ ensures that the symmetry isomorphism $V \otimes W \cong W \otimes V$ of vector spaces is indeed a comodule isomorphism, and thus $\Rep(M)$ becomes a 2-rig. 
\end{proof}

Some important examples arise from this commutative square of inclusions of affine monoids:
\[
\begin{tikzcd}
    \M(N,k)
    &
    k^N
    \ar[l]
    \\
    \GL(N,k)
    \ar[u]
    & 
    {k^\ast}^N.
    \ar[u]
    \ar[l]
\end{tikzcd}
\]
Restriction of representations gives a commutative square of 2-rig maps:
\[
\begin{tikzcd}
    \Rep(\M(N,k)) 
    \ar[r]
    \ar[d]
    &
    \Rep(k^N) 
    \arrow[d]
    \\
    \Rep(\GL(N,k))
    \ar[r]
    & 
    \Rep({k^\ast}^N). 
\end{tikzcd}
\]
This square of 2-rigs plays a key role in what follows: see \cref{sec:network} for an overview. 

To conclude this section, we prove that for any affine category $\C$, the representation category
\[  \Rep(\C) = \AAff\CCat(\C, \Fin\Vect^\sim) \] 
can be given the structure of a 2-rig. We can state this result more strongly using the 2-category
$\TRig$ studied in \cite{Schur}, in which

\begin{itemize}
    \item objects are symmetric monoidal Cauchy complete $k$-linear categories (that is, 2-rigs),
    \item morphisms are symmetric monoidal $k$-linear functors (that is, maps of 2-rigs),
    \item 2-morphisms are symmetric monoidal $k$-linear natural transformations.
\end{itemize}

\begin{thm}
\label{thm:internal_2-rig}
    $\Fin\Vect$ is an `internal 2-rig' in $\AAff\CCat$, meaning that the 2-functor 
    \[ 
    \AAff\CCat(-, \Fin\Vect^\sim) \maps \AAff\CCat\op \to \CCat
    \]
    canonically lifts through the forgetful functor $\TRig \to \CCat$. 
\end{thm}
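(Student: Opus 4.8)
The plan is to exhibit $\Fin\Vect^\sim$ as an object of $\AAff\CCat$ that carries, internally, all the structure of a 2-rig, and then to transport that structure along the representable 2-functor. The underlying principle is that for each fixed affine category $\C$ the covariant hom-2-functor $\AAff\CCat(\C,-)$ preserves finite products, so every internal structure morphism $\Fin\Vect^\sim \times \cdots \times \Fin\Vect^\sim \to \Fin\Vect^\sim$ induces, by postcomposition, a functor $\Rep(\C) \times \cdots \times \Rep(\C) \to \Rep(\C)$, and every internal coherence 2-cell induces a natural transformation. The monoidal axioms for $\Rep(\C)$ then follow formally from the corresponding internal axioms for $\Fin\Vect^\sim$, and contravariant functoriality in $\C$ will come for free from precomposition.

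First I would produce the internal structure, where the decisive input is that the base-change functor $(-)^\sim$ is built from the symmetric lax monoidal $\Phi \maps (\Fin\Vect, \otimes) \to (\Aff, \times)$ of \cref{ex:algebras_as_affine_monoids}. The tensor product $\otimes \maps \Fin\Vect \times \Fin\Vect \to \Fin\Vect$ is bilinear on hom-spaces, hence is \emph{not} a linear functor; but in $\Fin\Vect^\sim \times \Fin\Vect^\sim$ the hom-object is the affine scheme $\Phi(\hom(V,V')) \times \Phi(\hom(W,W'))$, and the assignment $(f,g) \mapsto f \otimes g$ is there a polynomial, i.e.\ algebraic, map. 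This is precisely the lax constraint $\Phi(U) \times \Phi(U') \to \Phi(U \otimes U')$ at work: on points it multiplies coordinates, which is exactly what promotes bilinear maps to algebraic maps on cartesian products. Thus $\otimes$ descends to an algebraic functor $\Fin\Vect^\sim \times \Fin\Vect^\sim \to \Fin\Vect^\sim$. In the same way the unit $I \maps 1 \to \Fin\Vect^\sim$ picking out $k$, the direct sum $\oplus$ and zero object (already linear, hence algebraic), and the associator, unitors, and symmetry of $\Fin\Vect$ (linear natural isomorphisms, hence with algebraic components and algebraically natural) assemble into an internal symmetric monoidal structure on $\Fin\Vect^\sim$; symmetry of the lax structure of $\Phi$ guarantees the internal structure is symmetric.

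Next I would run the transport and naturality. Postcomposing these internal data with $\AAff\CCat(\C,-)$ and using product-preservation to identify $\AAff\CCat(\C, \Fin\Vect^\sim \times \Fin\Vect^\sim) \cong \Rep(\C) \times \Rep(\C)$ equips $\Rep(\C)$ with a symmetric monoidal structure; it is $k$-linear because a 2-cell of $\AAff\CCat$ into $\Fin\Vect^\sim$ has components that are global points of the affine schemes $\Phi(\hom(-,-))$—that is, ordinary linear maps—added componentwise, so the hom-sets are vector spaces and composition is bilinear. Contravariant functoriality in $\C$ is immediate: precomposition by an algebraic functor $\C \to \C'$ commutes strictly with postcomposition by the fixed internal structure maps, so each restriction $\Rep(\C') \to \Rep(\C)$ is strong symmetric monoidal and $k$-linear, i.e.\ a map of 2-rigs, and algebraic natural transformations induce 2-rig 2-cells. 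The whole assignment is then a 2-functor $\AAff\CCat\op \to \TRig$ lifting $\AAff\CCat(-,\Fin\Vect^\sim)$ through $\TRig \to \CCat$, as required.

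The one ingredient not formally forced by the finite-product calculus, and the part I expect to be the real obstacle, is \emph{Cauchy completeness} of each $\Rep(\C)$: finite biproducts arise from the internal $\oplus$ and $0$, but the splitting of idempotents is a genuine completeness condition. Here I would argue that an idempotent endomorphism of an algebraic representation $F$ is a natural family of idempotent linear maps $e_c \maps F(c) \to F(c)$, each of which splits in $\Fin\Vect$; the pointwise images assemble into an algebraic subrepresentation because taking the image of an idempotent is itself an algebraic and functorial operation, so the splitting is inherited from the Cauchy completeness of $\Fin\Vect$. For one-object affine categories this recovers the already-noted fact that comodule categories are Cauchy complete (\cref{lem:comodule}), and extends it uniformly to all affine categories. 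Verifying that these pointwise splittings really are algebraic and natural—so that $\Rep(\C)$ lands in $\TRig$ and not merely in $\CCat$—is where the bulk of the careful checking lies.
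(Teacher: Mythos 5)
Your proposal is correct and follows essentially the same route as the paper: you transfer the 2-rig structure of $\Fin\Vect$ to the affine category $\Fin\Vect^\sim$ using the lax symmetric monoidal base change along $\Phi$ (your observation that the lax constraint $\Phi(U)\times\Phi(U')\to\Phi(U\otimes U')$ is what turns the bilinear tensor product into an algebraic functor on the cartesian product is exactly the paper's use of the laxator through $(\Fin\Vect\otimes\Fin\Vect)^\sim$), and then induce the structure on each $\Rep(\C)$ pointwise via the product-preserving representable, with functoriality in $\C$ by precomposition and whiskering. If anything you are more explicit than the paper about the one non-formal step, namely that the pointwise splitting of an idempotent natural transformation assembles into an algebraic subrepresentation, which the paper dispatches in a single sentence.
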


\begin{proof}
The 2-rig structure on $\AAff\CCat(\C, \Fin\Vect^\sim)$ can be defined pointwise once we transfer the 2-rig structure of $\Fin\Vect$ to the $\AAff\CCat$-enriched category $\Fin\Vect^\sim$. This 2-rig structure consists of an `additive' part (enrichment in $\Vect$, biproducts, and splitting of idempotents), and a `multiplicative' part (the symmetric monoidal structure); we consider these structures separately. 

To add morphisms $f, g \maps V \to W$ in $\Fin\Vect^\sim$, form the composite 
\[
    V \xrightarrow{\Delta} V \times V \xrightarrow{f \times g} W \times W \xrightarrow{\nabla} W
\]
where the codiagonal $\nabla$ is the addition operator, obtained from the corresponding addition operator $W \times W \to W$ in $\Fin\Vect$ by applying change of base $\Fin\Vect \to \Aff$. Similarly, to multiply $f \maps V \to W$ by a scalar $r \in k$, form the composite 
\[
    V \xrightarrow{f} W \xrightarrow{\lambda_r} W
\]
and then apply change of base to a scalar operator $\lambda_r \maps W \to W$ in $\Fin\Vect$ to obtain the corresponding operator in $\Fin\Vect^\sim$. Writing the vector space axioms diagrammatically in $\Fin\Vect$ (using $\nabla$ and the $\lambda_r$) and applying change of base, the same axioms hold in $\Fin\Vect^\sim$, hence $\Fin\Vect^\sim$ carries $\Vect$-enrichment. Similarly, to obtain biproducts of $\Fin\Vect$ considered as an $\Aff$-category, simply apply change of base $\FFin\Lin\CCat \to \AAff\CCat$ to the biproduct structure on $\Fin\Vect$, which consists of a 1-morphism or $\Fin\Vect$-enriched functor 
\[
    \oplus \maps \Fin\Vect \times \Fin\Vect \to \Fin\Vect
\]
together with various 2-morphisms needed to capture the biproduct structure, such as product projections $p_X \maps X \oplus Y \to X$ and coproduct injections $i_X \maps X \to X \oplus Y$, subject to the required 2-cell equations such as $i_X p_X + i_Y p_Y = 1_{X \oplus Y}$. Splitting of idempotent 1-cells in $\Fin\Vect^\sim$ derives from splitting of idempotents in $\Fin\Vect$. (Remember: 1-cells $V \to W$ in the underlying category of $\Fin\Vect^\sim$ `are' $k$-linear maps; they are not general maps $\Phi(V) \to \Phi(W)$ between $V$ and $W$ considered as affine schemes.) This completes the description of the additive structure of $\Fin\Vect^\sim$. 

The multiplicative structure 
    \[
    \otimes \maps \Fin\Vect^\sim \times \Fin\Vect^\sim \to
    \Fin\Vect^\sim
\]
is obtained as a composite 
\[
    \Fin\Vect^\sim \times \Fin\Vect^\sim \to (\Fin\Vect \otimes \Fin\Vect)^\sim \xrightarrow{\otimes^\sim} \Fin\Vect^\sim.
\]
The first arrow is a component of the laxator for the base change functor 
\[
    (-)^\sim \maps \FFin\Lin\CCat \to \AAff\CCat 
\]
induced by the lax symmetric monoidal functor 
\[
    \Phi \maps (\Fin\Vect, \otimes) \to (\Aff, \times)
\]
defined in \cref{ex:algebras_as_affine_monoids}. The second arrow is $(-)^\sim$ applied to $\otimes \maps \Fin\Vect \otimes \Fin\Vect \to \Fin\Vect$ in $\FFin\Lin\CCat$. In other words, we use the lax symmetric monoidal 2-functor $\FFin\Lin\CCat \to \AAff\CCat$ to map the symmetric pseudomonoid $(\Fin\Vect, \otimes)$ in $\FFin\Lin\CCat$ to a symmetric pseudomonoid $(\Fin\Vect^\sim, \otimes)$ in $\AAff\CCat$. That this multiplicative structure distributes over the additive structure boils down to its preserving $\Fin\Vect$-enrichment, which in turn follows from applying change of base to the corresponding equational statement holding in $\FFin\Lin\CCat$. 

This completes the desired lift on objects. On morphisms, the lift maps any algebraic functor $F \maps \C \to \D$ to the 2-rig map $\Rep(F) \maps \Rep(\D) \to \Rep(\C)$ given by precomposing with $F$. On 2-morphisms, it maps any natural transformation $\alpha \maps F \To G$ between algebraic functors $F, G \maps \C \to \D$ to the natural transformation given by left whiskering with $\alpha$. One can check that this lift is indeed a 2-functor.
\end{proof} 

Henceforth we use $\Rep$ to denote the lifted 2-functor in \cref{thm:internal_2-rig}. We spell out its description for future reference.

\begin{cor}
\label{cor:Rep_as_a_2-functor}
The 2-functor $\Rep \maps \AAff\CCat\op \to \TRig$ has the following properties:
\begin{itemize}
    \item It maps any affine category $\C$ to $\Rep(\C)$ made into a 2-rig as in \cref{thm:internal_2-rig}.
    \item It maps any algebraic functor $F \maps \C \to \D$ to the 2-rig map $\Rep(F) \maps \Rep(\D) \to \Rep(\C)$ given by precomposing with $F$.
    \item It maps any natural transformation $\alpha \maps F \To G$ between algebraic functors $F, G \maps \C \to \D$ to the natural transformation given by left whiskering with $\alpha$.
\end{itemize}
\end{cor}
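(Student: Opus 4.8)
The plan is to observe that this corollary merely unpacks the construction already carried out in the proof of \cref{thm:internal_2-rig}, so almost all the work is done; what remains is to confirm the three bullet points at the level of underlying data and then to check strict 2-functoriality. First I would recall that, forgetting the 2-rig structure, $\AAff\CCat(-, \Fin\Vect^\sim)$ is nothing but the representable 2-functor associated to the object $\Fin\Vect^\sim$ of $\AAff\CCat$. For any 2-category $\bC$ and object $X$, the hom-2-functor $\bC(-, X) \maps \bC\op \to \CCat$ is a strict 2-functor: it sends an object $a$ to the hom-category $\bC(a, X)$, a 1-cell $f \maps a \to b$ to the functor given by precomposition with $f$, and a 2-cell $\phi \maps f \To g$ to the natural transformation obtained by whiskering with $\phi$. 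Specializing $\bC = \AAff\CCat$ and $X = \Fin\Vect^\sim$ yields exactly the object-, 1-cell-, and 2-cell-level descriptions in the three bullets, at the level of underlying categories, functors, and natural transformations.

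Next I would invoke \cref{thm:internal_2-rig}, which lifts this representable 2-functor through $\TRig \to \CCat$ by transferring the 2-rig structure of $\Fin\Vect$ to $\Fin\Vect^\sim$ and defining every 2-rig operation on $\Rep(\C) = \AAff\CCat(\C, \Fin\Vect^\sim)$ pointwise, that is, objectwise in $\C$. The key point is that because each piece of 2-rig structure (sums of morphisms, scalar multiplication, biproducts, idempotent splittings, and the monoidal product) is computed pointwise from $\Fin\Vect^\sim$, precomposition by an algebraic functor $F \maps \C \to \D$ automatically commutes with all of them: the value of $\Rep(F)$ at a representation is just its restriction along $F$, and all the operations are evaluated at objects of $\C$, which $F$ merely relabels. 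Hence $\Rep(F)$ is a genuine 2-rig map, and likewise the whiskered transformation $\Rep(\alpha)$ is a symmetric monoidal $k$-linear natural transformation.

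The only thing left to verify --- and the one place where I would actually check something rather than cite formal nonsense --- is strict 2-functoriality: that $\Rep$ preserves identity 1-cells and 2-cells, composition of 1-cells, and both vertical and horizontal composition of 2-cells. These equations already hold for the underlying representable 2-functor $\bC(-, X)$. Since the forgetful 2-functor $\TRig \to \CCat$ is faithful on 1-cells and on 2-cells (a map of 2-rigs is determined by its underlying functor, and a monoidal natural transformation by its underlying natural transformation), each of these equations lifts verbatim from $\CCat$ to $\TRig$. I expect this faithfulness observation to be the main, though modest, obstacle, as it is precisely what licenses transporting the strict 2-functor identities up from the underlying representable 2-functor to the lifted one, thereby completing the proof.
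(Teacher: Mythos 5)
Your proposal is correct and matches the paper, which gives no separate argument for this corollary: it is simply the unpacking of the lift constructed in the proof of \cref{thm:internal_2-rig}, where the paper likewise leaves the verification of the 2-functor axioms as a routine check. One small quibble: a 2-rig map is not literally determined by its underlying functor (it carries monoidal coherence data), so the forgetful 2-functor $\TRig \to \CCat$ is not faithful on 1-cells as you assert; but since that coherence data is itself defined by precomposition and whiskering in this construction, the strict 2-functoriality equations still hold on the nose and your argument goes through.
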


\section{The free 2-rig on several bosonic subline objects}
\label{sec:free_2-rig_on_N_bosonic_subline_objects}

We are now in a position to describe the free 2-rig on several bosonic subline objects in two ways: an `abstract' way using 2-rig theory, and a `concrete' way using representation theory. They are, however, just slightly different outlooks on the same idea.

For the abstract description, recall from \cite[Lem.\ 4.2]{Schur} that the 2-category of 2-rigs has coproducts, with the coproduct of 2-rigs $\R$ and $\S$ denoted $\R \boxtimes \S$ because it behaves analogously to the coproduct of commutative rings, which is their usual tensor product. 

\begin{lem}
\label{lem:free_2-rig_on_N_bosonic_sublines}
The $N$-fold tensor product $\A^{\boxtimes N}$ is the free 2-rig on $N$ bosonic subline objects $s_1, \dots, s_N$. That is, given any 2-rig $\R$ containing bosonic subline objects $x_1, \dots, x_N$, there is a 2-rig map $F \maps \A^{\boxtimes N} \to \R$ with $F(s_i) = x_i$ for $1 \le i \le N$, and $F$ is determined uniquely up to isomorphism by this property.
\end{lem}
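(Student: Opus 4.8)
The plan is to reduce this to two universal properties already in hand: that $\A$ is the free 2-rig on one bosonic subline object (\cref{thm:free_2-rig_on_bosonic_subline}), and that $\A^{\boxtimes N}$ is the $N$-fold coproduct in the 2-category $\TRig$ (\cite[Lem.\ 4.2]{Schur}). This instantiates the general principle that a coproduct of free objects is free on the disjoint union of the generating data, so the argument is essentially formal.

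First I would fix coprojections $\iota_i \maps \A \to \A^{\boxtimes N}$ ($1 \le i \le N$) exhibiting $\A^{\boxtimes N}$ as the coproduct, and set $s_i := \iota_i(s)$. I would check that each $s_i$ is a bosonic subline object: since $\iota_i$ is a map of 2-rigs, hence strong symmetric monoidal, its structure isomorphism $\phi \maps \iota_i(s) \otimes \iota_i(s) \to \iota_i(s \otimes s)$ intertwines $\sigma_{\iota_i(s),\iota_i(s)}$ with $\iota_i(\sigma_{s,s})$; as $\sigma_{s,s} = 1$ in $\A$, invertibility of $\phi$ forces $\sigma_{s_i,s_i} = 1$. (More generally, any 2-rig map preserves bosonic subline objects.)

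Next, given a 2-rig $\R$ with bosonic subline objects $x_1, \dots, x_N$, I would apply \cref{thm:free_2-rig_on_bosonic_subline} to each $x_i$ to obtain 2-rig maps $F_i \maps \A \to \R$, unique up to isomorphism, with $F_i(s) = x_i$. The universal property of the coproduct says that precomposition with the coprojections,
\[
\TRig(\A^{\boxtimes N}, \R) \xrightarrow{\;(\iota_i^\ast)_i\;} \prod_{i=1}^N \TRig(\A, \R),
\]
is an equivalence of categories. Essential surjectivity yields a 2-rig map $F \maps \A^{\boxtimes N} \to \R$ with $F \circ \iota_i \cong F_i$ for every $i$, whence $F(s_i) = F(\iota_i(s)) \cong F_i(s) = x_i$. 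For uniqueness, if $F, G \maps \A^{\boxtimes N} \to \R$ both send each $s_i$ to $x_i$, then $F \circ \iota_i$ and $G \circ \iota_i$ both send $s$ to $x_i$, so \cref{thm:free_2-rig_on_bosonic_subline} gives $F \circ \iota_i \cong G \circ \iota_i$ for all $i$; since the precomposition functor above is an equivalence, and in particular full and faithful, these componentwise isomorphisms assemble into an isomorphism $F \cong G$.

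The one place demanding genuine verification, rather than pure formalism, is the preservation of the bosonic subline condition under $\iota_i$ sketched above. The only conceptual point to track is the bicategorical flavor of the coproduct in $\TRig$: its defining property is an equivalence of hom-categories rather than a bijection of hom-sets, which is exactly what converts the ``up to isomorphism'' clauses of the two input universal properties into the ``up to isomorphism'' clause of the conclusion. Everything else follows by composing the two equivalences.
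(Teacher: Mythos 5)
Your proposal is correct and follows essentially the same route as the paper: take the coprojections $i_k \maps \A \to \A^{\boxtimes N}$, set $s_k = i_k(s)$, apply \cref{thm:free_2-rig_on_bosonic_subline} to each $x_k$ to get maps $F_k$, and assemble them via the coproduct's universal property. You merely make explicit two points the paper leaves implicit---that 2-rig maps preserve bosonic subline objects, and that the coproduct's equivalence of hom-categories is what transports the ``unique up to isomorphism'' clauses---both of which are accurate and harmless additions.
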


\begin{proof} As a coproduct, $\A^{\boxtimes N}$ comes equipped with coprojections $i_k \maps \A \to \A^{\boxtimes N}$ for $k = 1, \ldots, N$. Let $s_k = i_k(s)$ where $s \in \A$ is the generating bosonic subline object. By \cref{thm:free_2-rig_on_bosonic_subline}, the bosonic subline object $x_k$ in $\R$ induces a 2-rig map $F_k \maps \A \to \R$ that sends $s$ to $x_k$, uniquely up to isomorphism. By the coproduct property, all these 2-rig maps give a 2-rig map $F \maps \A^{\boxtimes N} \to \R$  with $F(s_k) = x_k$, and $F$ is determined uniquely up to isomorphism by this property.
\end{proof}

In fact the free 2-rig on a bosonic subline object is familiar, not only as the category of $\N$-graded vector spaces of finite total dimension, but as the category of algebraic representations of the affine monoid $k$ with multiplication as its monoid operation. The reason is that an algebraic representation of $k$ on a vector space $V$ corresponds to a way of making $V$ into a comodule of its coordinate bialgebra $k[x]$ by \cref{lem:comodule}. Given such a comodule 
\[    \eta \maps V \to V \otimes k[x]  \]
we can take any vector $v \in V$ and extract its homogeneous part $v_n$ in each grade $n \in \N$ by writing
\[    \eta(v) = \sum_{n \in \N} v_n \otimes x^n .\]
In the converse direction, we can make any $\N$-graded vector space into a comodule of $k[x]$ by this formula.

In more detail, following \cref{ex:commutative_monoids_as_affine_monoids}, we have the following result. 

\begin{lem}
\label{lem:graded_over_commutative_monoid}
    For any commutative monoid $M$, the 2-rig $\Rep(\Spec(kM))$ is equivalent to the 2-rig of $M$-graded vector spaces of finite total dimension, which is also the free 2-rig $\overline{kM}$ on the discrete symmetric monoidal category with elements of $M$ as objects and multiplication in $M$ as its tensor product.
\end{lem}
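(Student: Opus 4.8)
The plan is to prove the two equivalences asserted in the statement separately: first that $\Rep(\Spec(kM))$ is equivalent, as a 2-rig, to the category of $M$-graded vector spaces of finite total dimension, and then that this latter 2-rig is the free 2-rig $\overline{kM}$.

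For the first equivalence I would start from \cref{lem:comodule}, which identifies $\Rep(\Spec(kM))$ with the 2-rig of finite-dimensional comodules of its coordinate bialgebra. By \cref{ex:commutative_monoids_as_affine_monoids} that bialgebra is the monoid algebra $kM$, whose coalgebra structure is the set-like one, $\Delta(m) = m \otimes m$ and $\epsilon(m) = 1$, coming from the diagonal on $M$. Given a comodule $\eta \maps V \to V \otimes kM$, I would expand $\eta(v) = \sum_{m \in M} v_m \otimes m$ and observe that coassociativity forces $(v_m)_{m'} = \delta_{m,m'}\,v_m$, while the counit law forces $v = \sum_m v_m$. This exhibits a canonical decomposition $V = \bigoplus_{m \in M} V_m$, with $V_m = \{v : \eta(v) = v \otimes m\}$, i.e.\ an $M$-grading, and the assignment is plainly inverse to the one sending a grading to its associated comodule; as comodule homomorphisms are exactly the grade-preserving linear maps, this is an equivalence of categories. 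To upgrade it to an equivalence of 2-rigs I would trace the comodule tensor product of \cref{lem:comodule}: its final leg applies the algebra multiplication of $kM$, which is the monoid multiplication, so a homogeneous tensor $v \otimes w$ with $v \in V_a$ and $w \in W_b$ acquires grade $ab$, recovering the convolution product $(V \otimes W)_n = \bigoplus_{ab=n} V_a \otimes W_b$; commutativity of $M$ makes the plain vector-space symmetry a comodule map, matching the standard symmetry on graded spaces.

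For the second equivalence I would invoke the fact, from the discussion following \cref{thm:free_2-rig_on_bosonic_subline}, that $\overline{k(-)}$ carries a symmetric monoidal category to the free 2-rig on it. Writing $M_\delta$ for the discrete symmetric monoidal category on $M$, its linearization $kM_\delta$ has object set $M$ with hom-space $k$ when $a = b$ and $0$ otherwise. Cauchy completing it---adjoining finite biproducts and splitting idempotents---yields the coproduct, as Cauchy complete $k$-linear categories, of one copy of $\Fin\Vect$ per element of $M$, which is exactly the category of $M$-graded vector spaces of finite total dimension; the monoidal structure induced by multiplication in $M_\delta$ is once more the convolution product. Hence $\overline{kM}$ agrees as a 2-rig with the graded-vector-space description, closing the chain of equivalences.

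I expect the real work to lie in the monoidal bookkeeping rather than in the bare equivalences of categories, which are routine. Specifically, one must check carefully that the comodule tensor product of \cref{lem:comodule} reproduces the convolution grading together with its sign-free symmetry, and, on the other side, that splitting idempotents in $kM_\delta$ produces exactly the graded vector spaces---this last point resting on the semisimplicity of the endomorphism algebras involved, which are finite products of matrix algebras over $k$.
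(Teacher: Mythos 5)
Your proposal is correct and follows essentially the same route as the paper: identify $\Rep(\Spec(kM))$ with finite-dimensional $kM$-comodules via \cref{lem:comodule}, read off the $M$-grading from the counit and coassociativity laws and the convolution product from the bialgebra multiplication, then identify the graded category with the Cauchy completion of the linearized discrete monoidal category $M$ and appeal to the free-2-rig adjunctions for the universal property. Your added remark justifying why idempotent splitting yields exactly the $M$-graded vector spaces is a small elaboration of a step the paper leaves implicit, but it is not a different argument.
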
 

\begin{proof}
By \cref{lem:comodule} we know that $\Rep(\Spec(kM))$ is equivalent to the category of finite-dimensional comodules of its coordinate bialgebra, namely the monoid algebra $kM$ equipped with the comultiplication $\delta \maps kM \to kM \otimes kM$ specified by $\delta(m) = m \otimes m$ and counit $\varepsilon \maps kM \to k$ specified by $\varepsilon(m) = 1$ for all $m \in M$. A comodule is given by a finite-dimensional vector space $V$ and a map $\eta \maps V \to V \otimes kM$. This map takes any element $v$ to an expression of type $\sum_{m \in M} v_m \otimes m$, and the counit law for the comodule amounts to the condition that $v = \sum_m v_m$, while the coassociative law amounts to the conditions that $(v_m)_n = 0$ if $m \neq n$ and $(v_m)_m = v_m$. These are exactly what is needed to say that $V$ is the total space of an $M$-graded vector space, where the homogeneous component of a vector $v$ in grade $m \in M$ is $v_m$. The same line of thought prescribes the grade of $v_p \otimes w_q$ for homogeneous elements $v_p, w_q$ in two comodules $V$ and $W$ to be $pq$, so that 
    \[
    (V \otimes W)_m = \bigoplus_{m = pq} V_p \otimes W_q,
    \]
    and the symmetry isomorphism is the usual (unsigned) switch of tensor factors, $V_p \otimes W_q \to W_q \otimes V_p$.
    
    This 2-rig of $M$-graded vector spaces of finite total dimension is equivalent to the 2-rig obtained by starting from $M$ viewed as a discrete symmetric monoidal category with one object for each element of $M$, then forming the $k$-linear symmetric monoidal category (also denoted $kM$) by applying the free vector space functor to the hom-sets for $M$, and finally closing up under biproducts and retracts to form the $k$-linear Cauchy completion $\overline{kM}$. The final assertion is that strong symmetric monoidal functors from $M$ to the underlying symmetric monoidal category of any 2-rig $\R$ are equivalent to 2-rig maps $\overline{kM} \to \R$. This follows from Lemmas 14 and 15 of \cite{Schur}. 
\end{proof}

We can use this result to describe the 2-rig of representations of the algebraic monoid $k \cong \Spec(k\N)$ introduced in \cref{ex:M_1}.

\begin{lem}
\label{lem:free_2-rig_on_bosonic_subline}
The 2-rig $\Rep(k)$ is the free 2-rig on a bosonic subline object, $\A$.
\end{lem}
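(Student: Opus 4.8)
The plan is to recognize this statement as an immediate specialization of \cref{lem:graded_over_commutative_monoid} to the case $M = \N$, combined with the explicit description of $\A$ given in \cref{sec:free_on_bosonic_subline}. The work has essentially been done already; what remains is to thread the identifications together and check that the monoidal and symmetric structures line up.

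First I would recall from \cref{ex:M_1} that the affine monoid $k = \M(1,k)$ is precisely $\Spec(k\N)$, where $\N$ denotes the additive monoid of natural numbers. This identifies $\Rep(k)$ with $\Rep(\Spec(k\N))$. Next I would apply \cref{lem:graded_over_commutative_monoid} with $M = \N$, which immediately yields an equivalence of 2-rigs between $\Rep(\Spec(k\N))$ and the 2-rig of $\N$-graded vector spaces of finite total dimension, with monoidal product given on homogeneous components by $(V \otimes W)_n = \bigoplus_{p+q = n} V_p \otimes W_q$ and with the unsigned switch $V_p \otimes W_q \to W_q \otimes V_p$ as symmetry.

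The final step is to observe that this is exactly the category $\A$ as defined in \cref{sec:free_on_bosonic_subline}: the same underlying category of $\N$-graded vector spaces of finite total dimension, the same monoidal structure using addition of gradings, and the same unsigned symmetry. Since \cref{thm:free_2-rig_on_bosonic_subline} establishes that $\A$ is the free 2-rig on a bosonic subline object, the result follows.

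The only point requiring any care—and it is a minor one—is confirming that the symmetry produced by \cref{lem:graded_over_commutative_monoid} is the \emph{unsigned} switch, so that the generating object $s$ is a bosonic rather than fermionic subline object. But \cref{lem:graded_over_commutative_monoid} explicitly records that the symmetry is the usual unsigned switch of tensor factors, which matches the convention used to define $\A$, so there is no genuine obstacle and the statement is in effect a corollary.
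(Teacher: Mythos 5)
Your proposal is correct and follows essentially the same route as the paper: identify $\Rep(k)$ with $\Rep(\Spec(k\N))$, apply \cref{lem:graded_over_commutative_monoid} with $M = \N$ to recognize the result as $\A$, and conclude via \cref{thm:free_2-rig_on_bosonic_subline}. Your extra check that the symmetry is the unsigned switch is a sensible precaution, though the paper leaves it implicit.
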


\begin{proof}
By \cref{lem:graded_over_commutative_monoid}, $\Rep(k) \simeq \Rep(\Spec(k\N))$ is the 2-rig of $\N$-graded vector spaces of finite total dimension, or more precisely $\A$, which according to \cref{thm:free_2-rig_on_bosonic_subline} is the free 2-rig on a bosonic subline object.
\end{proof}

We now describe the free 2-rig on $N$ bosonic sublines in two different ways. The proof would be quick if we knew
\[  \Rep(\C \times \D) \simeq \Rep(\C) \boxtimes \Rep(\D) \]
for all affine categories $\C$ and $\D$, or even just all affine monoids. So far we have only shown this for affine monoids arising from commutative monoids via the recipe in \cref{ex:commutative_monoids_as_affine_monoids}. Luckily this is all we need.

\begin{lem}
\label{lem:rep_of_product}
For any commutative monoids $M, N$ there is an equivalence of 2-rigs
\[  \Rep(\Spec(kM) \times \Spec(kN)) \simeq  \Rep(\Spec(kM)) \boxtimes \Rep(\Spec(kN)) .\]
\end{lem}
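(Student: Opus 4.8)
The plan is to reduce the statement to combinatorial facts about commutative monoids established earlier, then invoke the universal property of the coproduct $\boxtimes$ in $\TRig$. By \cref{lem:graded_over_commutative_monoid}, each of the three 2-rigs in question is a free 2-rig of the form $\overline{kL}$ for a suitable commutative monoid $L$: the left-hand side is $\Rep(\Spec(k(M \times N)))$, which by \cref{ex:commutative_monoids_as_affine_monoids} (where $\Spec(k-)$ preserves products) equals $\Rep(\Spec(k(M \times N)))$ and hence $\overline{k(M\times N)}$; while the two factors on the right are $\overline{kM}$ and $\overline{kN}$.

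First I would use the fact that $\Spec(k-) \maps \CMon\op \to \mathsf{AffMon}$ preserves products, proved in \cref{ex:commutative_monoids_as_affine_monoids}, to identify $\Spec(kM) \times \Spec(kN) \cong \Spec(k(M \times N))$ as affine monoids. Thus the left-hand side becomes $\Rep(\Spec(k(M \times N)))$, and \cref{lem:graded_over_commutative_monoid} identifies this with $\overline{k(M\times N)}$, the free 2-rig on the discrete symmetric monoidal category with object monoid $M \times N$ under its product operation.

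The heart of the argument is then to show that $\overline{k(M \times N)} \simeq \overline{kM} \boxtimes \overline{kN}$. Here I would exploit the fact that $\overline{k(-)}$ is a composite of left 2-adjoints, as recalled in the discussion following \cref{thm:free_2-rig_on_bosonic_subline}: the free $k$-linear functor $k(-) \maps \SMC \to \SMLin$ and the Cauchy completion $\overline{(-)} \maps \SMLin \to \TRig$ are both left adjoints, hence so is their composite, which therefore preserves coproducts. On the symmetric-monoidal-category side, the discrete monoidal category on $M \times N$ is the coproduct (in $\SMC$) of the discrete monoidal categories on $M$ and on $N$, since a strong symmetric monoidal functor out of a discrete monoidal group-like category is just a monoid homomorphism on objects, and $\mathrm{Hom}(M \times N, -) \cong \mathrm{Hom}(M,-) \times \mathrm{Hom}(N,-)$ exhibits $M \times N$ as the coproduct of $M$ and $N$ in $\CMon$. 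Applying the coproduct-preserving 2-functor $\overline{k(-)}$ then yields $\overline{k(M\times N)} \simeq \overline{kM} \boxtimes \overline{kN}$.

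I expect the main obstacle to be the careful bookkeeping in the penultimate step: verifying that the discrete symmetric monoidal category on $M \times N$ really is the $\SMC$-coproduct of those on $M$ and $N$, and that passing through $k(-)$ and Cauchy completion genuinely realizes $\boxtimes$ as the image of this coproduct. One must check that the universal property being used matches the one defining $\boxtimes$ in \cite[Lem.\ 4.2]{Schur}, i.e.\ that 2-rig maps out of $\overline{kM} \boxtimes \overline{kN}$ correspond to pairs of 2-rig maps, which correspond (via freeness) to pairs of monoid homomorphisms, which correspond to a single homomorphism out of $M \times N$. Once this chain of equivalences is assembled, the result follows formally; the only real content lies in matching the universal properties correctly and handling the symmetry isomorphisms (the unsigned switch) consistently throughout.
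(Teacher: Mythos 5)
Your proposal is correct and follows essentially the same route as the paper: both reduce the claim to $\overline{k(M\times N)} \simeq \overline{kM} \boxtimes \overline{kN}$ and derive this from the fact that $M \times N$ is the coproduct of $M$ and $N$ in the 2-category of symmetric monoidal categories, combined with the universal property of $\overline{k(-)}$ from \cref{lem:graded_over_commutative_monoid}. Your phrasing via ``the composite of left 2-adjoints preserves coproducts'' is just a repackaging of the paper's explicit chain of equivalences of hom-categories, and the one step you flag as delicate (that the discrete symmetric monoidal category on $M \times N$ is the $\SMC$-coproduct) is likewise asserted without proof in the paper.
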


\begin{proof}
By \cref{lem:graded_over_commutative_monoid} we know $\Rep(\Spec(kM)) \simeq \overline{kM}$ for any commutative monoid $M$, so it suffices to show
\[  \overline{k(M \times N)} \simeq  \overline{kM} \boxtimes \overline{kN}   . \]
 We establish this by showing that both sides have the same universal property. 
By \cref{lem:graded_over_commutative_monoid}, 2-rig maps of the form $\overline{k(M \times N)} \to \R$ are equivalent to strong symmetric monoidal functors $M \times N \to \R$, where we identify $M$ and $N$ with discrete symmetric monoidal categories. Since $M \times N$ is the coproduct of $M$ and $N$ in the 2-category of symmetric monoidal categories, such symmetric monoidal functors $M \times N \to \R$ are equivalent to pairs of symmetric monoidal functors $M \to \R$, $N \to \R$. These in turn are equivalent to pairs of 2-rig maps 
\[\Rep(\Spec(kM)) \to \R, \qquad \Rep(\Spec(kN)) \to \R,\] 
again by \cref{lem:graded_over_commutative_monoid}. Finally, such pairs are equivalent to 2-rig maps
\[  \Rep(\Spec(kM)) \boxtimes \Rep(\Spec(kN)) \to \R \]
because $\boxtimes$ is the coproduct of 2-rigs. 
\end{proof}

\begin{thm}
\label{thm:free_2-rig_on_N_bosonic_sublines}
The free 2-rig on $N$ bosonic subline objects is $\Rep(k^N) \simeq \A^{\boxtimes N}$, or equivalently the 2-rig of $\N^N$-graded vector spaces of finite total dimension, with the symmetry defined using the usual symmetry on $\Vect$ in each homogeneous component.
\end{thm}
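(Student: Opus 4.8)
The plan is to assemble the theorem from the lemmas already in place, establishing the three descriptions in parallel. The crucial first step is to recognize the affine monoid $k^N$—the multiplicative monoid of diagonal $N \times N$ matrices—as $\Spec(k\N^N)$. Recall from \cref{ex:M_1} that the single affine monoid $k$ is $\Spec(k\N)$, and that $k^N$ is by definition the $N$-fold product $k \times \cdots \times k$ in $\mathsf{AffMon}$. Since $\Spec(k-) \maps \CMon\op \to \mathsf{AffMon}$ preserves products (\cref{ex:commutative_monoids_as_affine_monoids}), and the $N$-fold biproduct of $\N$ in $\CMon$ is $\N^N$, this product of affine monoids is carried to $k^N = \Spec(k\N)^{\times N} \cong \Spec(k\N^N)$.

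With this identification in hand, the $\N^N$-graded description is immediate from \cref{lem:graded_over_commutative_monoid} applied to $M = \N^N$: it gives $\Rep(k^N) \simeq \Rep(\Spec(k\N^N)) \simeq \overline{k\N^N}$, the 2-rig of $\N^N$-graded vector spaces of finite total dimension, whose symmetry is the unsigned switch of tensor factors on each homogeneous component—exactly as claimed, and exactly what makes the generating objects \emph{bosonic} rather than fermionic.

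For the equivalence with $\A^{\boxtimes N}$, I would iterate \cref{lem:rep_of_product}. Writing $\Spec(k\N^N) \cong \Spec(k\N) \times \Spec(k\N^{N-1})$ and inducting on $N$, that lemma yields $\Rep(\Spec(k\N^N)) \simeq \Rep(\Spec(k\N))^{\boxtimes N}$; since $\Rep(\Spec(k\N)) = \Rep(k) \simeq \A$ by \cref{lem:free_2-rig_on_bosonic_subline}, we conclude $\Rep(k^N) \simeq \A^{\boxtimes N}$. Finally \cref{lem:free_2-rig_on_N_bosonic_sublines} identifies $\A^{\boxtimes N}$ as the free 2-rig on $N$ bosonic subline objects, completing all three identifications. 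Under these equivalences the $i$-th generating subline object $s_i$ corresponds to the coprojection of the generator $s \in \A$ into the $i$-th tensor factor, and to the graded vector space concentrated in the $i$-th standard basis degree of $\N^N$.

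Because every ingredient is already proved, there is no substantial obstacle; the only point requiring care is bookkeeping—verifying that the chain of equivalences is symmetric monoidal (so the generators really are bosonic subline objects), and that the associativity and coherence of $\boxtimes$, as a coproduct, legitimize the inductive use of \cref{lem:rep_of_product}. Both are guaranteed by the fact that the cited lemmas assert equivalences \emph{of 2-rigs} and that coproducts are associative up to coherent isomorphism.
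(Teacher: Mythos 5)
Your proposal is correct and follows essentially the same route as the paper's own proof: both rest on \cref{lem:rep_of_product} (iterated to $N$ factors), \cref{lem:free_2-rig_on_bosonic_subline}, \cref{lem:free_2-rig_on_N_bosonic_sublines}, and \cref{lem:graded_over_commutative_monoid}. You merely make explicit two steps the paper leaves implicit—the identification $k^N \cong \Spec(k\N^N)$ via product-preservation of $\Spec(k-)$, and the induction legitimizing the $N$-fold use of the two-factor lemma—which is a welcome bit of extra care rather than a divergence.
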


\begin{proof}
Since the affine monoid $k$ is $\Spec(k\N)$, \cref{lem:rep_of_product} implies that
\[  \Rep(k^N) \simeq \Rep(k)^{\boxtimes N} \]
as 2-rigs. \cref{lem:free_2-rig_on_bosonic_subline} implies that 
\[  \Rep(k)^{\boxtimes N}  \simeq \A^{\boxtimes N} \]
where $\A$ is the free 2-rig on one bosonic subline object. \cref{lem:free_2-rig_on_N_bosonic_sublines} says that $\A^{\boxtimes N}$ is the free 2-rig on $N$ bosonic subline objects. By \cref{lem:graded_over_commutative_monoid}, $\Rep(k^N)$ is also the 2-rig of finite-dimensional $\N^N$-graded vector spaces, with the symmetry defined using the usual symmetry on $\Vect$ in each homogeneous component.
\end{proof}

\section{The free 2-rig on several bosonic line objects}
\label{sec:N_bosonic_line_objects}

Just as we can describe the free 2-rig on $N$ bosonic subline objects in two slightly different ways, we can do the same for free 2-rig on $N$ bosonic line objects. The only difference is that throughout the discussion the multiplicative monoid $k$ is replaced by its submonoid $k^\ast$ consisting of nonzero elements, and the 2-rig $\A$ of finite-dimensional $\N$-graded vector spaces is replaced by the 2-rig $\T$ of finite-dimensional $\Z$-graded vector spaces, as introduced in \cref{sec:free_on_bosonic_line}.

We begin with a new description of the 2-rig of algebraic representations of the affine group $k^\ast \cong \Spec(k\Z)$ introduced in \cref{ex:GL_1}.

\begin{lem}
\label{lem:free_2-rig_on_bosonic_line}
The 2-rig $\Rep(k^{\ast})$ is the free 2-rig on a bosonic line object, $\T$.
\end{lem}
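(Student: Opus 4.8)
The plan is to follow, almost verbatim, the strategy used to prove \cref{lem:free_2-rig_on_bosonic_subline}, replacing the commutative monoid $\N$ everywhere by the group $\Z$. The first step is to invoke \cref{ex:GL_1}, which identifies the affine group $k^\ast$ with $\Spec(k\Z)$; this reduces the claim to an analysis of $\Rep(\Spec(k\Z))$.

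Next I would apply \cref{lem:graded_over_commutative_monoid} to the commutative monoid $M = \Z$. That lemma supplies an equivalence of 2-rigs between $\Rep(\Spec(k\Z))$ and the 2-rig of $\Z$-graded vector spaces of finite total dimension, with tensor product $(V \otimes W)_n = \bigoplus_{p+q = n} V_p \otimes W_q$ and the unsigned symmetry $V_p \otimes W_q \to W_q \otimes V_p$. But this is exactly the 2-rig $\T$ introduced in \cref{sec:free_on_bosonic_line}. Finally, \cref{thm:free_2-rig_on_bosonic_line} states that $\T$ is the free 2-rig on a bosonic line object, which finishes the argument.

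There is essentially no hard step here, since all the real work was done in establishing \cref{lem:graded_over_commutative_monoid} and \cref{thm:free_2-rig_on_bosonic_line}. The one point deserving a moment's attention---the closest thing to an obstacle---is confirming that the generating object genuinely is a \emph{line} object rather than merely a subline object. This is where the passage from $\N$ to $\Z$ matters: because $\Z$ is a group, the grade-$(-1)$ object is a tensor inverse to the grade-$1$ generator $\ell$, so $\ell$ is invertible, whereas in the $\N$-graded case of \cref{lem:free_2-rig_on_bosonic_subline} the generator $s$ had no inverse. The unsigned symmetry guarantees that $\sigma_{\ell,\ell}$ is the identity, so $\ell$ is bosonic rather than fermionic.
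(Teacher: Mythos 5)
Your proposal is correct and follows essentially the same route as the paper: identify $k^\ast$ with $\Spec(k\Z)$, apply \cref{lem:graded_over_commutative_monoid} with $M = \Z$ to see that $\Rep(k^\ast)$ is the 2-rig of $\Z$-graded vector spaces of finite total dimension, recognize this as $\T$, and conclude by \cref{thm:free_2-rig_on_bosonic_line}. In fact your write-up is slightly more careful than the paper's own one-line proof, which contains an evident copy-paste slip (it names $\A$ instead of $\T$ and cites the subline theorem \cref{thm:free_2-rig_on_bosonic_subline} rather than the line theorem), and your closing remark about invertibility of the grade-$1$ generator coming from $\Z$ being a group is exactly the point that distinguishes this case from \cref{lem:free_2-rig_on_bosonic_subline}.
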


\begin{proof}
By \cref{lem:graded_over_commutative_monoid}, $\Rep(k^\ast) \simeq \Rep(\Spec(k\mathbb{Z}))$ is the 2-rig of $\Z$-graded vector spaces of finite total dimension, or more precisely $\A$, which according to \cref{thm:free_2-rig_on_bosonic_subline} is the free 2-rig on a bosonic subline object.
\end{proof}

\begin{thm}
\label{thm:free_2-rig_on_N_bosonic_lines}
 The free 2-rig on $N$ bosonic line objects is $\Rep({k^\ast}^N) \simeq \T^{\boxtimes N}$, or equivalently the 2-rig of $\Z^N$-graded vector spaces of finite total dimension, with the symmetry defined using the usual symmetry on $\Vect$ in each homogeneous component.
\end{thm}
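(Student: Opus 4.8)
The plan is to run the proof of \cref{thm:free_2-rig_on_N_bosonic_sublines} essentially verbatim, replacing the additive monoid $\N$ by $\Z$, the affine monoid $k$ by $k^\ast$, and the 2-rig $\A$ by $\T$ throughout. Every lemma invoked there has an exact analogue already available in the line setting, so no genuinely new construction is required.

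First I would observe that $k^\ast$ is $\Spec(k\Z)$ by \cref{ex:GL_1}, and that the functor $\Spec(k-)$ preserves products by \cref{ex:commutative_monoids_as_affine_monoids}, so that ${k^\ast}^N \cong \Spec(k\Z)^N \cong \Spec(k\Z^N)$ as affine monoids. Next, \cref{lem:rep_of_product}, extended from two factors to $N$ factors by a routine induction using that $\boxtimes$ is the coproduct of 2-rigs, gives
\[
\Rep({k^\ast}^N) \simeq \Rep(\Spec(k\Z))^{\boxtimes N} = \Rep(k^\ast)^{\boxtimes N}.
\]
Then \cref{lem:free_2-rig_on_bosonic_line}, which identifies $\Rep(k^\ast)$ with the free 2-rig $\T$ on one bosonic line object, yields $\Rep(k^\ast)^{\boxtimes N} \simeq \T^{\boxtimes N}$.

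It then remains to check that $\T^{\boxtimes N}$ is indeed the free 2-rig on $N$ bosonic line objects; this is the line analogue of \cref{lem:free_2-rig_on_N_bosonic_sublines} and is proved identically. As a coproduct, $\T^{\boxtimes N}$ carries coprojections $i_k \maps \T \to \T^{\boxtimes N}$, and I set $\ell_k = i_k(\ell)$ for the generating bosonic line object $\ell \in \T$. Given any 2-rig $\R$ equipped with bosonic line objects $x_1, \dots, x_N$, \cref{thm:free_2-rig_on_bosonic_line} produces 2-rig maps $F_k \maps \T \to \R$ with $F_k(\ell) = x_k$, unique up to isomorphism, and the coproduct property assembles these into a map $F \maps \T^{\boxtimes N} \to \R$ with $F(\ell_k) = x_k$, again unique up to isomorphism.

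Finally, the concrete $\Z^N$-graded description follows from \cref{lem:graded_over_commutative_monoid} applied with $M = \Z^N$: it identifies $\Rep(\Spec(k\Z^N))$ with the 2-rig of $\Z^N$-graded vector spaces of finite total dimension, whose symmetry is the usual unsigned switch of tensor factors on each homogeneous component, matching the bosonic hypothesis. The only point deserving any attention, and it is a minor one, is the inductive passage from the two-factor \cref{lem:rep_of_product} to the $N$-factor case; everything else is a direct transcription of the subline argument, so I do not expect a real obstacle.
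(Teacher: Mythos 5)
Your proposal is correct and follows essentially the same route as the paper, which proves this theorem by explicitly declaring that the argument for \cref{thm:free_2-rig_on_N_bosonic_sublines} carries over with $k^\ast$ replacing $k$, $\Z$ replacing $\N$, and $\T$ replacing $\A$. You have simply written out the details (including the $N$-factor extension of \cref{lem:rep_of_product} and the line analogue of \cref{lem:free_2-rig_on_N_bosonic_sublines}) that the paper leaves implicit.
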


\begin{proof}
The proof follows the same argument as that for \cref{thm:free_2-rig_on_N_bosonic_sublines}, but with $k^\ast$ replacing $k$, $\Z$ replacing $\N$ and $\T$ replacing $\A$.
\end{proof}

\section{Dimension and subdimension}

A line object can be thought of as having dimension 1, and a subline object as having dimension at most 1. In fact, these are special cases of more general concepts: we can say what it means for an object in a 2-rig having dimension $d$, or dimension at most $d$. These again come in `bosonic' and `fermionic' forms.

The symmetric group $S_n$ has two one-dimensional representations: the trivial representation, which we call $\triv$, and the sign representation, where each permutation $\sigma \in S_n$ acts as multiplication by its sign $\sign(\sigma)$. These give Schur functors, which act on any 2-rig $\R$ sending any object $x \in \R$ to its $n$th symmetric power
\[   \Sym^n(x) = \triv \otimes_{k[S_n]} x^{\otimes n} \]
and its $n$th exterior power
\[   \Lambda^n(x) = \sign \otimes_{k[S_n]} x^{\otimes n} \] 
respectively.

\begin{defn} 
An object $x$ in a 2-rig has \define{bosonic subdimension $n$} if $\Lambda^{n+1}(x) \cong 0$, and \define{fermionic subdimension $n$} if $\Sym^{n+1}(x) \cong 0$.
\end{defn}

\begin{defn} 
An object $x$ in a 2-rig has \define{bosonic dimension $n$} if $\Lambda^n(x)$ is a bosonic line object, and \define{fermionic dimension $n$} if $\Sym^n(x)$ is a fermionic line object.
\end{defn}

Note that an object has bosonic (resp.\ fermionic) subdimension 1 if and only if it is a bosonic (resp.\ fermionic) subline object, and it has bosonic (resp.\ fermionic) dimension 1 if and only if it is a bosonic (resp.\ fermionic) line object.

\begin{lem}
\label{lem:subdim_inequality}
If an object $x$ in a 2-rig has bosonic (resp.\ fermionic) subdimension $n$, it has bosonic (resp.\ fermionic) subdimension $m$ for all $m \ge n$.
\end{lem}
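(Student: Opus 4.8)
The plan is to reduce the statement to a single structural fact---that $\Lambda^{k+1}(x)$ is a retract of $\Lambda^k(x) \otimes x$---and then to finish with a one-line induction. By definition, $x$ has bosonic subdimension $n$ precisely when $\Lambda^{n+1}(x) \cong 0$, and to say $x$ has bosonic subdimension $m$ for every $m \ge n$ is exactly to say that $\Lambda^{k}(x) \cong 0$ for every $k \ge n+1$. So the content of the bosonic half of the lemma is the implication $\Lambda^{n+1}(x) \cong 0 \;\Rightarrow\; \Lambda^{k}(x) \cong 0$ for all $k \ge n+1$, and the fermionic half is the same statement with $\Sym$ in place of $\Lambda$.

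The key step is to show that $\Lambda^{k+1}(x)$ is a direct summand of $\Lambda^k(x) \otimes x$ for every $k \ge 1$. I would argue this at the level of the idempotents defining the exterior powers. Write $a_j = \frac{1}{j!}\sum_{\sigma \in S_j} \sign(\sigma)\,\sigma \in k[S_j]$ for the antisymmetrizer, which is a genuine idempotent since $k$ has characteristic zero; then $\Lambda^j(x)$ is by definition the splitting of $a_j$ acting on $x^{\otimes j}$. Regarding $S_k \subset S_{k+1}$ as the permutations fixing the last letter, the identity $a_{k+1}\sigma = \sign(\sigma)\,a_{k+1}$ for $\sigma \in S_{k+1}$ yields, upon averaging over $S_k$,
\[ a_{k+1}\,(a_k \otimes 1) = a_{k+1}. \]
Hence the idempotent $a_{k+1}$ factors through $a_k \otimes 1$, and a short diagram chase with the two splittings shows that the induced maps $\Lambda^{k+1}(x) \to \Lambda^k(x)\otimes x \to \Lambda^{k+1}(x)$ compose to the identity. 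Thus $\Lambda^{k+1}(x)$ is a retract of $\Lambda^k(x) \otimes x$, as claimed. (Conceptually this is just the Pieri rule categorified to a natural isomorphism $\Lambda^k(x)\otimes x \cong \Lambda^{k+1}(x) \oplus S_{\R,(2,1^{k-1})}(x)$, but the idempotent argument avoids having to name the complementary summand.)

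The induction is then immediate. The base case $k = n+1$ is the hypothesis. For the inductive step, suppose $\Lambda^k(x) \cong 0$ for some $k \ge n+1$. Since $\otimes$ distributes over biproducts we have $\Lambda^k(x) \otimes x \cong 0$, and a retract of a zero object is a zero object, so $\Lambda^{k+1}(x) \cong 0$. This gives $\Lambda^k(x) \cong 0$ for all $k \ge n+1$, proving the bosonic case. The fermionic case is word-for-word dual: replacing the antisymmetrizer $a_j$ by the symmetrizer $\frac{1}{j!}\sum_{\sigma \in S_j}\sigma$ gives $\Sym^{k+1}(x)$ as a retract of $\Sym^k(x)\otimes x$, and the same induction applies.

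The only point requiring care---and what I would regard as the main obstacle---is making sure the retract fact is valid in an \emph{arbitrary} 2-rig rather than merely in $\Rep(S_n)$ or the ring of symmetric functions. The idempotent argument is exactly what settles this: it uses only that $k$ has characteristic zero (so that $a_{k+1}$ and $a_k$ are idempotents) and that idempotents split in a 2-rig, both of which are built into the definition. Everything else is formal manipulation of the splitting maps, so no further input from the representation theory of the symmetric groups is needed.
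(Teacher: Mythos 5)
Your proof is correct and takes essentially the same route as the paper: both hinge on the factorization $a_{k+1}(a_k\otimes 1)=a_{k+1}$ of the antisymmetrizer, which the paper packages as ``the canonical epimorphism $x^{\otimes k+1}\to\Lambda^{k+1}(x)$ factors through $x\otimes\Lambda^k(x)$'' and you package as ``$\Lambda^{k+1}(x)$ is a retract of $\Lambda^k(x)\otimes x$,'' the two being interchangeable here since either way $\Lambda^k(x)\cong 0$ forces $\Lambda^{k+1}(x)\cong 0$. The induction and the symmetrizer-for-antisymmetrizer swap in the fermionic case are likewise identical to the paper's argument.
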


\begin{proof}
For the first, note that the canonical epimorphism
$x^{\otimes n+1} \to \Lambda^{n+1} (x)$
factors through $x \otimes \Lambda^n(x)$, so 
\[  \Lambda^n (x) \cong 0 \; \implies \; 
\Lambda^{n+1} (x) \cong 0 \]
and thus an object of subdimension $n$ has subdimension $n+1$. For the second, similarly note that the canonical epimorphism $x^{\otimes n+1} \to \Sym^{n+1} (x)$ factors through $x \otimes \Sym^n(x)$. 
\end{proof}

\begin{lem}
If $x$ and $y$ are objects in a 2-rig with bosonic subdimensions $m$ and $n$, respectively, then $x \oplus y$ has bosonic subdimension $m+n$. Similarly if $x$ and $y$ have fermionic subdimension $m$ and $n$, respectively, then $x \oplus y$ has fermionic subdimension $m+n$.
\end{lem}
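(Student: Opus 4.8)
The plan is to reduce both statements to a single structural fact, the categorified Cauchy (addition) formula for exterior and symmetric powers in a 2-rig:
\[
\Lambda^p(x \oplus y) \;\cong\; \bigoplus_{i+j=p} \Lambda^i(x) \otimes \Lambda^j(y),
\qquad
\Sym^p(x \oplus y) \;\cong\; \bigoplus_{i+j=p} \Sym^i(x) \otimes \Sym^j(y).
\]
Granting the exterior-power formula, the bosonic case is immediate. To show that $x \oplus y$ has bosonic subdimension $m+n$ we must check $\Lambda^{m+n+1}(x \oplus y) \cong 0$. Every pair with $i + j = m+n+1$ satisfies $i \ge m+1$ or $j \ge n+1$, since $i \le m$ and $j \le n$ would force $i+j \le m+n$. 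Because $x$ has bosonic subdimension $m$ and $y$ has bosonic subdimension $n$, \cref{lem:subdim_inequality} gives $\Lambda^i(x) \cong 0$ whenever $i \ge m+1$ and $\Lambda^j(y) \cong 0$ whenever $j \ge n+1$. Since $\otimes$ is $k$-linear it preserves the zero object, so each summand $\Lambda^i(x) \otimes \Lambda^j(y)$ with $i+j = m+n+1$ vanishes, and hence so does the direct sum. The fermionic case is identical with $\Sym$ in place of $\Lambda$.

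The real content is therefore the addition formula, which I would prove by the induced-representation method rather than by appealing to the full Littlewood--Richardson rule. Writing $z = x \oplus y$ and expanding $z^{\otimes p}$ by distributivity of $\otimes$ over $\oplus$, the symmetry makes $S_p$ act on the $2^p$ summands by permuting tensor factors; the summands indexed by the $i$-element subsets of $\{1, \dots, p\}$ form a single orbit whose point-stabilizer is the Young subgroup $S_i \times S_j$ (with $j = p-i$). Reordering tensor factors via the symmetry then exhibits an $S_p$-equivariant isomorphism
\[
z^{\otimes p} \;\cong\; \bigoplus_{i+j=p} \mathrm{Ind}_{S_i \times S_j}^{S_p}\bigl(x^{\otimes i} \otimes y^{\otimes j}\bigr),
\]
where $x^{\otimes i} \otimes y^{\otimes j}$ carries the evident $S_i \times S_j$-action. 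Applying the Schur-functor construction $\sign \otimes_{k[S_p]} (-)$ and using the standard adjunction $\sign \otimes_{k[S_p]} \mathrm{Ind}_{H}^{S_p}(M) \cong (\mathrm{Res}_H \sign) \otimes_{k[H]} M$, together with the fact that the restriction of $\sign$ to $S_i \times S_j$ is the external product of the sign representations of $S_i$ and $S_j$, produces $\Lambda^i(x) \otimes \Lambda^j(y)$ in each degree. The symmetric-power formula is the same argument with $\triv$ replacing $\sign$, using that $\triv$ restricts to $\triv$. All these manipulations are legitimate in a 2-rig: since $k$ has characteristic zero the group algebras $k[S_p]$ are semisimple, and the relevant idempotents split in the Cauchy-complete category $\R$, which is exactly the setting of the Schur-functor calculus of \cite{Schur}.

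I expect the main obstacle to be the careful justification of this equivariant decomposition at the categorified level, as opposed to merely in the Grothendieck ring $K(\R)$. Concretely, one must verify that the reordering isomorphisms furnished by the symmetry assemble into a genuine $S_p$-equivariant isomorphism, and that the idempotent-splitting construction $\rho \otimes_{k[G]}(-)$ commutes with induction and restriction of the group actions carried by the objects $z^{\otimes p}$. Once this bookkeeping is in place, the combinatorial heart of the lemma---that $i+j > m+n$ forces $i > m$ or $j > n$---is immediate, so essentially all of the effort is concentrated in the addition formula and none in the deduction itself.
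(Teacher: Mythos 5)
Your proof is correct and follows essentially the same route as the paper: both reduce the claim to the addition formula $\Lambda^{p}(x\oplus y)\cong\bigoplus_{i+j=p}\Lambda^{i}(x)\otimes\Lambda^{j}(y)$ (and its $\Sym$ analogue) combined with \cref{lem:subdim_inequality} and the observation that $i+j=m+n+1$ forces $i\ge m+1$ or $j\ge n+1$. The only difference is that the paper simply cites the addition formula as well known, whereas you additionally sketch a (correct) proof of it via induction from Young subgroups.
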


\begin{proof} 
Assume $x$ has bosonic subdimension $m$ and $y$ has bosonic subdimension $n$. By \cref{lem:subdim_inequality} we have $\Lambda^m(x) \cong 0$ for $M > m$ and $\Lambda^n(y) \cong 0$ for $N > n$. Using the well-known isomorphism
\[   \Lambda^k(x \oplus y) \cong \bigoplus_{m+n=k}
\Lambda^m(x) \otimes \Lambda^n(x) \]
it follows that $\Lambda^k(x \oplus y) \cong 0$ for $k > m + n$, so $x \oplus y$ has bosonic subdimension $m+n$. The same argument works for fermionic subdimension using the isomorphism
\[   \Sym^k(x \oplus y) \cong \bigoplus_{m+n=k}
\Sym^m(x) \otimes \Sym^n(x) . \qedhere \]
\end{proof} 

\begin{cor}
\label{cor:bosonic_subline_coproduct}
If $s_1, \ldots, s_N$ are bosonic sublines, then the coproduct $s_1 \oplus \cdots \oplus s_N$ is of bosonic subdimension $N$. 
\end{cor}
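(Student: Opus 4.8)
The plan is to prove this by a straightforward induction on $N$, using the preceding lemma on the bosonic subdimension of a coproduct. First I would record the key observation that each bosonic subline object $s_i$ has bosonic subdimension $1$: by the remark following the definitions of dimension and subdimension, an object has bosonic subdimension $1$ precisely when it is a bosonic subline object, since $\Lambda^2(s_i) \cong 0$ is exactly the condition that $\sigma_{s_i, s_i}$ is the identity.

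Then I would set up the induction on $N$. The base case $N = 1$ is immediate from the observation above. For the inductive step, assuming that $s_1 \oplus \cdots \oplus s_{N-1}$ has bosonic subdimension $N - 1$, I would apply the preceding lemma with $x = s_1 \oplus \cdots \oplus s_{N-1}$ (of subdimension $N-1$) and $y = s_N$ (of subdimension $1$), concluding that $s_1 \oplus \cdots \oplus s_N$ has bosonic subdimension $(N-1) + 1 = N$. This closes the induction.

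I expect no real obstacle here: the corollary simply packages the additivity of bosonic subdimension under coproducts, which the preceding lemma already establishes, together with the identification of bosonic subline objects as objects of subdimension $1$. The only step meriting any care is making sure to invoke that identification, so that the additive formula delivers precisely the value $N$; recall that ``subdimension $N$'' here means $\Lambda^{N+1}$ vanishes, which by \cref{lem:subdim_inequality} is an upper-bound-style condition, so no minimality claim is needed.
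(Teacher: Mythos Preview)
Your proposal is correct and matches the paper's intent: the paper states this as an immediate corollary of the preceding additivity lemma without giving any proof, and your induction on $N$ using that lemma together with the identification of bosonic sublines as objects of bosonic subdimension $1$ is exactly the implicit argument.
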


The following trio of conjectures, if true, would clarify the overall picture laid out in the next section. However, we do not strictly need them in what follows.

\begin{conj} 
\label{conj:dimension_vs_subdimension}
If an object in a 2-rig has bosonic dimension $n$, it has bosonic subdimension $n$. If it has fermionic dimension $n$, it has fermionic subdimension $n$. 
\end{conj}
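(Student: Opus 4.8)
The plan is to reduce to a single universal instance and then detect the vanishing by a sufficiently faithful functor. Both the hypothesis ``$x$ has bosonic dimension $n$'' and the conclusion ``$\Lambda^{n+1}(x) \cong 0$'' are transported along 2-rig maps, since every 2-rig map is additive (hence carries zero objects to zero objects) and commutes with the Schur functors $\Lambda^k$. Consequently it suffices to prove the statement for the generic object of bosonic dimension $n$: assuming it exists, let $\mathcal{F}_n$ be the free 2-rig on an object $u$ whose $n$-th exterior power is a bosonic line object (one expects to build this from the free 2-rig on one object by universally inverting $\Lambda^n$ of the generator), with universal object $u$. If $\Lambda^{n+1}(u) \cong 0$ in $\mathcal{F}_n$, then for any $(\R, x)$ with $x$ of bosonic dimension $n$ the classifying map $G \maps \mathcal{F}_n \to \R$ sends $\Lambda^{n+1}(u)$ to $\Lambda^{n+1}(x)$ and $0$ to $0$, giving $\Lambda^{n+1}(x) \cong 0$.

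To prove $\Lambda^{n+1}(u) \cong 0$ in $\mathcal{F}_n$, I would exhibit a faithful 2-rig map out of $\mathcal{F}_n$ under which the image of $u$ visibly splits into lines. The $n$-dimensional standard representation $\mathrm{std}$ of $\GL(n,k)$ has $\Lambda^n(\mathrm{std}) = \det$ a bosonic line object, so $(\Rep(\GL(n,k)), \mathrm{std})$ classifies a map $P \maps \mathcal{F}_n \to \Rep(\GL(n,k))$ with $P(u) = \mathrm{std}$. Composing with restriction to the diagonal torus gives
\[ \mathcal{F}_n \xrightarrow{\;P\;} \Rep(\GL(n,k)) \longrightarrow \Rep({k^\ast}^n) \simeq \T^{\boxtimes n} \]
(\cref{thm:free_2-rig_on_N_bosonic_lines}), and this composite sends $u$ to $\xi_1 \oplus \cdots \oplus \xi_n$, the sum of the $n$ coordinate characters, each a bosonic line object. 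By \cref{cor:bosonic_subline_coproduct} this sum has bosonic subdimension $n$, so $\Lambda^{n+1}$ of it is $0$. A faithful 2-rig map reflects zero objects (if $E(z) \cong 0$ then $E(\mathrm{id}_z) = 0$, so $\mathrm{id}_z = 0$ by faithfulness, whence $z \cong 0$); hence, provided the displayed composite is faithful, $\Lambda^{n+1}(u) \cong 0$. The restriction map to the torus is one of the 2-rig extensions analysed in this paper, so the real content is concentrated in the faithfulness of $P$.

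The hard part---and the reason this remains a conjecture---is precisely this: showing that $\Rep(\GL(n,k))$ ``sees all'' of the free 2-rig on a dimension-$n$ object, i.e.\ that $P$ is faithful (equivalently, faithfully embedding $\mathcal{F}_n$ into $\Rep(\GL(n,k))$ via $u \mapsto \mathrm{std}$). This is an instance of the splitting principle for the universal object $u$, and one cannot simply invoke the Splitting Principle Conjecture, since its hypothesis (finite subdimension of $u$) is exactly what we are trying to establish. A bare-hands route would categorify the classical flag-bundle construction: build a faithful extension $\mathcal{F}_n \to \mathcal{F}_n'$ in which $u$ acquires a bosonic subline object $\xi_1$ with a ``quotient'' of bosonic dimension $n-1$, then iterate $n$ times until $u$ becomes a sum of $n$ bosonic sublines; establishing faithfulness and the drop in dimension at each step is the crux.

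Finally, the fermionic statement follows by the parallel argument with $\Lambda^k$ and $\Sym^k$ interchanged throughout: one replaces $\Rep(\GL(n,k))$ by the analogous 2-rig carrying the universal object whose $n$-th symmetric power is a fermionic line object, and uses the fermionic coproduct lemma in place of \cref{cor:bosonic_subline_coproduct}. Alternatively one transports the bosonic result along the categorified involution exchanging symmetric and exterior powers.
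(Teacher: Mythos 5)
First, a point of orientation: the paper does not prove this statement. It is \cref{conj:dimension_vs_subdimension}, and the authors explicitly say they do not need it for their main results, so there is no proof of record to compare yours against. Judged on its own terms, your proposal is a reduction rather than a proof, and you say so yourself. The scaffolding is sound as far as it goes: 2-rig maps preserve Schur functors, line objects, bosonicity and zero objects, so it does suffice to treat a universal example, and a faithful linear functor reflects zero objects by exactly the argument you give. But both loads you place on that scaffolding are unproven. (i) The existence of the free 2-rig $\mathcal{F}_n$ on an object of bosonic dimension $n$ is established nowhere in the paper; ``bosonic dimension $n$'' involves inverting $\Lambda^n$ of the generator, and the paper only \emph{conjectures} (\cref{conj:free_2-rig_on_object_of_dimension_N}) that the result is $\Rep(\GL(n,k))$ with its tautologous representation. (ii) The faithfulness of the classifying map $P \maps \mathcal{F}_n \to \Rep(\GL(n,k))$ is precisely the content you would need from that conjecture; granting it, your argument closes immediately (and you do not even need the further restriction to the torus, since $\Lambda^{n+1}$ of the standard representation already vanishes in $\Rep(\GL(n,k))$ by classical representation theory). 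So what you have actually established is the implication ``\cref{conj:free_2-rig_on_object_of_dimension_N}, or even just faithfulness of the comparison map, implies \cref{conj:dimension_vs_subdimension}.'' That is a worthwhile observation about how the paper's trio of conjectures interlock, but it is not a proof, and the crux you isolate (faithfulness of $P$) is of essentially the same depth as the statement being proved.

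Two smaller cautions. Your fermionic coda via ``the categorified involution exchanging symmetric and exterior powers'' is not automatic: the involution $\Omega$ constructed in the appendix is monoidal and $k$-linear but explicitly \emph{not} symmetric monoidal, hence not a 2-rig map, so transporting a statement about the symmetry-dependent notions ``bosonic'' and ``fermionic'' along it requires passing through the super-twisted 2-rig $\G$ and checking carefully how $\Lambda$ and $\Sym$ interchange; the paper only gestures at this. Separately, if you want to avoid $\mathcal{F}_n$ altogether, the natural direct attack in an arbitrary 2-rig is to use the characteristic-zero retraction $\Lambda^{n+1}(x) \to \Lambda^n(x) \otimes x \to \Lambda^{n+1}(x)$ (a nonzero multiple of the identity) together with invertibility of $\Lambda^n(x)$ to exhibit $\Lambda^n(x)^\ast \otimes \Lambda^{n+1}(x)$ as a retract of $x$ and then try to force it to vanish; that this does not obviously succeed is presumably why the statement remains a conjecture.
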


\begin{conj}
\label{conj:free_2-rig_on_object_of_subdimension_N} 
The 2-rig $\Rep(\M(N,k))$ is the free 2-rig on an object of bosonic subdimension $N$, namely the tautologous representation of $\M(N,k)$ on $k^N$. 
\end{conj}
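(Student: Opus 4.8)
The plan is to realize $\Rep(\M(N,k))$ as a universal quotient of the free 2-rig on one object and to identify what gets killed with the ideal generated by $\Lambda^{N+1}$. Write $V \in \Rep(\M(N,k))$ for the tautologous representation on $k^N$. First I would check that $V$ has bosonic subdimension $N$: since $\dim V = N$, elementary linear algebra gives $\Lambda^{N+1}(V) \cong 0$ (and in fact $\Lambda^N(V)$ is the determinant line object, so $V$ has bosonic dimension exactly $N$). Because $\ksbar$ is the free 2-rig on one object \cite{Schur}, the object $V$ determines a 2-rig map $A \maps \ksbar \to \Rep(\M(N,k))$ sending the generator to $V$ --- this is the map labeled $A$ in the diagram of the introduction. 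The whole statement then reduces to showing that $A$ enjoys the universal property of the quotient that universally forces $\Lambda^{N+1}$ of the generator to vanish.

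Second, I would pin down the structure of $A$ using classical polynomial representation theory. In characteristic zero $\Rep(\M(N,k))$ is semisimple, with indecomposables the Schur objects $S_\lambda(V)$, and $S_\lambda(V) \cong 0$ precisely when $\ell(\lambda) > N$; thus $A$ is essentially surjective. On hom-spaces between tensor powers, Schur--Weyl duality identifies the map
\[ k[S_n] = \ksbar(x^{\otimes n}, x^{\otimes n}) \longrightarrow \Rep(\M(N,k))(V^{\otimes n}, V^{\otimes n}) \]
with the canonical surjection onto $\mathrm{End}_{\GL(N)}(V^{\otimes n}) \cong k[S_n]/I_n$, where $I_n$ is the annihilator of $V^{\otimes n}$, cut out concretely by the Young symmetrizers of shapes with more than $N$ rows. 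Hence $A$ is full and surjective on these homs (both sides vanish when the two degrees differ), exhibiting $\Rep(\M(N,k))$ as a 2-rig quotient of $\ksbar$ whose kernel is generated, in the 2-rig sense, by the single object $\Lambda^{N+1}(x) = S_{(1^{N+1})}$.

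Third comes the heart of the argument: the factorization. Given any 2-rig $\R$ and any object $x \in \R$ with $\Lambda^{N+1}(x) \cong 0$, freeness of $\ksbar$ yields a 2-rig map $G \maps \ksbar \to \R$ with $G$ sending the generator to $x$, and I must produce $F \maps \Rep(\M(N,k)) \to \R$ with $F \circ A \cong G$, so that $F(V) \cong x$. The key lemma is that $G$ annihilates every Schur object that $A$ does, that is, $S_\lambda(x) \cong 0$ whenever $\ell(\lambda) > N$. This follows from the classical fact that in $\ksbar$ the Schur object $S_\lambda$ is a direct summand (with multiplicity one) of the tensor product $\Lambda^{\lambda'_1} \otimes \cdots \otimes \Lambda^{\lambda'_{\lambda_1}}$ indexed by the conjugate partition, since $s_\lambda$ occurs with coefficient $K_{\lambda'\lambda'} = 1$ in $e_{\lambda'}$ (a summand rather than a mere subquotient because $\ksbar$ is semisimple). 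When $\ell(\lambda) > N$ the first factor is $\Lambda^{\lambda'_1} = \Lambda^{\ell(\lambda)}$ with $\ell(\lambda) \ge N+1$, which $G$ sends to $\Lambda^{\ell(\lambda)}(x)$; since $\Lambda^{N+1}(x) \cong 0$ forces $\Lambda^m(x) \cong 0$ for all $m \ge N+1$ by \cref{lem:subdim_inequality}, this factor vanishes, so the tensor product and its summand $S_\lambda(x)$ vanish. As a 2-rig map preserves tensor powers, direct summands, and the symmetric-group actions on them, $G$ therefore kills $I_n$ on each $x^{\otimes n}$, so it descends through the hom-quotients defining $A$; assembling these gives the desired $F$, determined up to isomorphism because $A$ is full and essentially surjective.

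The main obstacle I anticipate is not any single computation but making the phrase \emph{2-rig quotient} and the factorization through it fully rigorous: one must verify that defining $F$ on objects by $V^{\otimes n} \mapsto x^{\otimes n}$ (equivalently $S_\lambda(V) \mapsto S_\lambda(x)$) and on morphisms by the induced map on $k[S_n]/I_n$ respects composition, the monoidal structure, the symmetry, and splitting of idempotents, and is genuinely well defined --- which is exactly where the vanishing $S_\lambda(x) \cong 0$ for $\ell(\lambda) > N$ is needed. Semisimplicity of both $\ksbar$ and $\Rep(\M(N,k))$ in characteristic zero should make this bookkeeping manageable, reducing every coherence check to a statement already available in $\ksbar$ and transported along the full, essentially surjective functor $A$.
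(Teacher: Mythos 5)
You should first be aware that the paper offers \emph{no proof} of this statement: it appears only as \cref{conj:free_2-rig_on_object_of_subdimension_N}, in a trio of conjectures that the authors explicitly set aside (``we do not strictly need them in what follows''). So there is no in-paper argument to compare yours against, and your proposal is genuinely new relative to the paper. Its outline looks sound. The two substantive inputs are correct: (a) since $S_\lambda$ is a multiplicity-one direct summand of $\Lambda^{\lambda'_1} \otimes \cdots \otimes \Lambda^{\lambda'_{\lambda_1}}$ in the semisimple 2-rig $\ksbar$, and $\lambda'_1 = \ell(\lambda)$ with $\Lambda^m(x) \cong 0$ for all $m \ge N+1$ by \cref{lem:subdim_inequality}, any 2-rig map out of $\ksbar$ sending $x$ to an object of bosonic subdimension $N$ kills every $S_\lambda(x)$ with $\ell(\lambda) > N$; and (b) classical polynomial representation theory of $\GL(N)$ in characteristic zero identifies $\Rep(\M(N,k))$ as the semisimple 2-rig whose simples are exactly the surviving $S_\lambda(V)$, with $A$ full and essentially surjective.

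That said, do not undersell what remains. First, you need the full Schur--Weyl statement that $k[S_n] \to \Rep(\M(N,k))(V^{\otimes n}, V^{\otimes n})$ is surjective with kernel $I_n$ for \emph{all} $n$; the lemmas the paper proves toward \cref{thm:Ai_ess_inj} establish only the isomorphism for $n \le N$, so the case $n > N$ is additional classical input you must cite or prove. Second, and more seriously, the factorization step needs a general principle of the form: if $A \maps \C \to \D$ is a full, essentially surjective 2-rig map between semisimple 2-rigs and $G \maps \C \to \R$ is a 2-rig map with $G(c) \cong 0$ whenever $A(c) \cong 0$, then $G \cong F \circ A$ for a 2-rig map $F$ unique up to \emph{monoidal} natural isomorphism. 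This is the 2-rig analogue of quotienting by a tensor ideal and is very plausible in the semisimple setting, but the paper develops no such machinery; checking that the symmetry, the coherence constraints, and the Cauchy-completion data all descend, and that the uniqueness isomorphism can be taken monoidal (using that $A$ is full, essentially surjective, and strong monoidal), is exactly where the mathematical content lies. None of this looks like an obstruction, but until it is written out you have a correct and promising strategy rather than a complete proof.
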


\begin{conj}
\label{conj:free_2-rig_on_object_of_dimension_N} 
The 2-rig $\Rep(\GL(N,k))$ is the free 2-rig on an object of bosonic dimension $N$, namely the tautologous representation of $\GL(N,k)$ on $k^N$. 
\end{conj}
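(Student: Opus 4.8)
The plan is to deduce the statement from the monoid case together with a localization. I would reduce it to three ingredients: (i) \cref{conj:free_2-rig_on_object_of_subdimension_N}, which identifies $\Rep(\M(N,k))$ as the free 2-rig on an object of bosonic subdimension $N$, with universal object the tautologous representation $V = k^N$; (ii) a localization lemma exhibiting $\Rep(\GL(N,k))$ as the universal 2-rig obtained from $\Rep(\M(N,k))$ by freely inverting the bosonic subline object $\det := \Lambda^N(V)$; and (iii) the bosonic case of \cref{conj:dimension_vs_subdimension}, that bosonic dimension $N$ implies bosonic subdimension $N$. Here $\Lambda^N(V)$ is a genuine bosonic subline object of $\Rep(\M(N,k))$ but fails to be invertible there, since $\M(N,k)$ contains singular matrices; the point of passing to $\GL(N,k)$ is precisely to make $\det$ invertible.

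For the localization lemma I would work at the level of coordinate bialgebras via \cref{lem:comodule}. Since $\mathcal{O}(\GL(N,k)) = \mathcal{O}(\M(N,k))[\det^{-1}]$, a finite-dimensional comodule of the localization is the same as a finite-dimensional comodule of $\mathcal{O}(\M(N,k))$ on which the grouplike element $\det$ acts invertibly; equivalently, every object becomes a retract of some $W \otimes \det^{\otimes(-n)}$ with $W$ polynomial and $n \ge 0$. I would organize this as a $\Z$-indexed filtered colimit followed by Cauchy completion, and then check the expected universal property: any 2-rig map out of $\Rep(\M(N,k))$ that sends $\det$ to an invertible object factors, uniquely up to isomorphism, through the restriction map $\Rep(\M(N,k)) \to \Rep(\GL(N,k))$. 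The symmetric monoidal and linear compatibilities are routine given the comodule description.

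With these ingredients the result assembles quickly. Given a 2-rig $\R$ and an object $x$ of bosonic dimension $N$, so that $\ell := \Lambda^N(x)$ is a bosonic line object, ingredient (iii) yields $\Lambda^{N+1}(x) \cong 0$, whence $x$ has bosonic subdimension $N$. By (i) there is an essentially unique 2-rig map $\Rep(\M(N,k)) \to \R$ with $V \mapsto x$; it sends $\det = \Lambda^N(V)$ to $\Lambda^N(x) = \ell$, which is invertible, so by (ii) it factors uniquely through $\Rep(\GL(N,k))$, giving the desired map taking the tautologous representation to $x$. Conversely the tautologous representation of $\GL(N,k)$ has bosonic dimension $N$, and essential uniqueness is inherited from (i) and (ii).

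The main obstacle is ingredient (iii), i.e.\ \cref{conj:dimension_vs_subdimension}, which is moreover \emph{necessary} for the statement: the tautologous object has both $\Lambda^N(V)$ invertible and $\Lambda^{N+1}(V) \cong 0$, so the universal property can hold only if every object of bosonic dimension $N$ also has subdimension $N$. I expect this to be the hard part. The naive attempt---realizing $\Lambda^{N+1}(x)$ as a Schur-functor summand of $\Lambda^2(\Lambda^N(x))$, which vanishes because $\ell$ is a bosonic line object---already fails for degree reasons when $N \ge 2$, since $\Lambda^2(\Lambda^N(x))$ is homogeneous of degree $2N$ in $x$ whereas $\Lambda^{N+1}(x)$ has degree $N+1$, and $2N = N+1$ only for $N = 1$. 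A successful argument seems to require exploiting the invertibility of $\ell$ more seriously---for instance by building a candidate dual $\Lambda^{N-1}(x) \otimes \ell^{-1}$ of $x$ and using the resulting rigidity (a categorified Cayley--Hamilton relation) to force $\Lambda^{N+1}(x) \cong 0$. One should also bear in mind that ingredient (i) is itself still conjectural.
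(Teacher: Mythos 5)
This statement is not proved in the paper: it is stated explicitly as a conjecture (\cref{conj:free_2-rig_on_object_of_dimension_N}), grouped with \cref{conj:dimension_vs_subdimension} and \cref{conj:free_2-rig_on_object_of_subdimension_N} as a trio the authors say ``would clarify the overall picture'' but which they do not prove and do not strictly need. So there is no proof in the paper to compare yours against, and your proposal does not close the gap either: it is a conditional reduction whose inputs (i) and (iii) are precisely the other two open conjectures, as you yourself acknowledge. What you have written is a plausible dependency diagram among the three conjectures plus a localization lemma, not an argument that establishes any of them.

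That said, the architecture of the reduction is sound and contains genuine insight. Your observation that \cref{conj:dimension_vs_subdimension} (in the form ``$\Lambda^N(x)$ invertible implies $\Lambda^{N+1}(x) \cong 0$'') is \emph{necessary} for \cref{conj:free_2-rig_on_object_of_dimension_N} is correct: a 2-rig map commutes with Schur functors, so if the tautologous $V$ has $\Lambda^{N+1}(V) \cong 0$ and maps to $x$, then $\Lambda^{N+1}(x) \cong 0$. Your diagnosis that the naive degree-$2N$ versus degree-$(N+1)$ mismatch kills the obvious attack on that implication is also correct, and your suggestion of exploiting rigidity via a candidate dual $\Lambda^{N-1}(x) \otimes \ell^{-1}$ is the right direction. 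The localization lemma (ii) is classically true at the level of objects ($\mathcal{O}(\GL(N,k)) = \mathcal{O}(\M(N,k))[\det^{-1}]$, and every rational representation is a polynomial one twisted by a negative power of $\det$), but the 2-categorical universal property---that precomposition with restriction induces an \emph{equivalence} between 2-rig maps out of $\Rep(\GL(N,k))$ and 2-rig maps out of $\Rep(\M(N,k))$ inverting $\det$---is itself a nontrivial claim that would need its own proof; note the paper only proves the much weaker statement that restriction $D \maps \Rep(\M(N,k)) \to \Rep(\GL(N,k))$ is an extension (\cref{lem:D_ess_inj}). To turn this into a theorem you would need to supply proofs of (i), (ii), and (iii), none of which are available in the paper.
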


\section{A network of 2-rigs}
\label{sec:network}

Our main results concern a number of 2-rigs important in representation theory, and maps between these:
\[
\begin{tikzcd}
    \ksbar
    \arrow[r,"A"]
    & 
    \Rep(\M(N,k)) 
    \ar[r,"B"]
    \ar[d,"D"']
    &
    \Rep(k^N) \simeq \A^{\boxtimes N}
    \arrow[d,"C"]
    \\
    & 
    \Rep(\GL(N,k))
    \ar[r,"E"']
    & 
    \Rep({k^\ast}^N) \simeq \T^{\boxtimes N}
\end{tikzcd}
\]

The objects in this diagram are as follows:
\begin{itemize}
\item $\ksbar$ is the category of finitely supported and finite-dimensional representations of the groupoid of finite sets, $\S$, with its Day tensor product. In \cite[Thm.\ 3.3]{Schur} we proved that $\ksbar$ is the free 2-rig on one object.
\item $\Rep(\M(N,k))$ is the 2-rig of algebraic representations of $\M(N,k)$, the affine monoid of $N \times N$ matrices with entries in $k$. \cref{conj:free_2-rig_on_object_of_subdimension_N} claims that $\Rep(\M(N,k))$ is also the free 2-rig on an object of bosonic subdimension $N$, namely $k^N$.
\item $\Rep(k^N)$ is the 2-rig of algebraic representations of $k^N$, which becomes an affine monoid under pointwise multiplication. We described this 2-rig in several different ways in \cref{thm:free_2-rig_on_N_bosonic_sublines}.  It is the free 2-rig on $N$ bosonic subline objects $s_1, \dots, s_N$.  It is also the $N$-fold tensor power $\T^{\boxtimes N}$ of the free 2-rig on one bosonic subline object, and the 2-rig of $\N^N$-graded vector spaces of finite total dimension.
\item $\Rep(\GL(N,k))$ is the 2-rig of algebraic representations of $\GL(N,k)$, the affine group of invertible $N \times N$ matrices with entries in $k$. \cref{conj:free_2-rig_on_object_of_dimension_N} claims that $\Rep(\GL(N,k))$ is also the free 2-rig on an object of bosonic dimension $N$, namely $k^N$.
\item $\Rep({k^\ast}^N)$ is the 2-rig of algebraic representations of ${k^*}^n$, which becomes an affine group under pointwise multiplication. We described this 2-rig in several ways in \cref{thm:free_2-rig_on_N_bosonic_lines}.  It is the free 2-rig on $N$ bosonic line objects $\ell_1, \dots, \ell_N$.  It is also the $N$-fold tensor power $\A^{\boxtimes N}$ of the free 2-rig on one bosonic line object, and the 2-rig of $\Z^N$-graded vector spaces of finite total dimension.
\end{itemize}
The arrows in this diagram can be defined using representation theory:
\begin{itemize}
\item The 2-rig map $A \maps \ksbar \to \Rep(\M(N,k))$ sends the generator $x \in \ksbar$ to the tautologous representation of the affine monoid $\M(N,k)$ on the vector space $k^N$. 
\item The 2-rig map $B \maps \Rep(\M(N,k)) \to \Rep(k^N)$ is given by restricting algebraic representations of $M(n)$ to the submonoid consisting of diagonal matrices.
\item The 2-rig map $C \maps \Rep(k^N)  \to \Rep({k^\ast}^N)$ restricts algebraic representations of $k^N$ to the subgroup ${k^\ast}^N$.
\item The 2-rig map $D \maps \Rep(\M(N,k)) \to \Rep(\GL(N,k))$ restricts algebraic representations of $\M(N,k)$ to $\GL(N,k)$.
\item The 2-rig map $E \maps \Rep(\GL(N,k)) \to \Rep({k^*}^N)$ restricts representations of $\GL(N,k)$ to the subgroup consisting of invertible diagonal matrices.
\end{itemize}
We expect that all these arrows can also be defined using the universal properties of the various 2-rigs involved:
\begin{itemize}
\item The 2-rig map $A \maps \ksbar \to \Rep(\M(N,k))$ sends the generator $x$ of the free 2-rig on one generator to the object $k^N$. 
\item Given \cref{conj:free_2-rig_on_object_of_subdimension_N}, the 2-rig map $B \maps \Rep(\M(N,k)) \to \Rep(k^N)$ sends the generator of the free 2-rig on an object of bosonic subdimension $n$, namely $k^N$, to the direct sum of bosonic subline objects $s_1 \oplus \cdots \oplus s_n$.
\item The 2-rig map $C \maps \Rep(k^N)  \to \Rep({k^\ast}^N)$ sends each bosonic subline object $s_i$ to the bosonic line object $\ell_i$.
\item Given Conjectures \ref{conj:dimension_vs_subdimension}--\ref{conj:free_2-rig_on_object_of_dimension_N}, the 2-rig map $D \maps \Rep(\M(N,k)) \to \Rep(\GL(N,k))$ maps the generator of the free 2-rig on an object of bosonic subdimension $n$ to the generator of the free 2-rig on an object of bosonic dimension $n$.
\item Given \cref{conj:free_2-rig_on_object_of_dimension_N}, the 2-rig map $E \maps \Rep(\GL(N,k)) \to \Rep({k^*}^n)$ maps the generator of the free 2-rig on an object of bosonic dimension $N$, namely $k^N$, to the direct sum of bosonic line objects $\ell_1 \oplus \cdots \oplus \ell_N$.
\end{itemize} 

The technical heart of this paper is to prove that all these maps are `extensions', in the following sense:

\begin{defn}
\label{defn:extension}
    A functor $F \maps \C \to \D$ is an \define{extension} if the underlying functor has the following three properties: 
\begin{itemize}
 \item {\rm (Fa)} $F$ is faithful: if $f, g \maps c \to c'$ are morphisms in $\R$ such that  $F(f) = F(g)$, then $f = g$. 
 
 \item {\rm (Co)} $F$ is conservative: if  $F(f)$ is an isomorphism, then $f$ is an isomorphism. 
 
 \item {\rm (Es)} $F$ is essentially injective: if $c,c'$ are objects of $\C$ such that  $F(c) \cong F(c')$, then $c \cong c'$.
\end{itemize}
If $F \maps \R \to \S$ is a 2-rig map and also an extension, we call it a \define{2-rig extension}.
\end{defn}
\noindent
All three properties above can be seen as forms of `injectivity'. The third is generally the hardest to check. For all three, we will repeatedly use the following easy lemma:

\begin{lem} 
\label{lem:extension}
Let $G \maps \C \to \D$ and $F \maps \D \to \E$ be functors.
Then:
\begin{itemize}
\item If $F$ and $G$ both satisfy one of the conditions {\rm (Fa), (Co)} or {\rm (Es)}, then $F \circ G$ satisfies that condition.
\item If $F \circ G$ satisfies one of {\rm (Fa), (Co)} or {\rm (Es)}, then $G$ satisfies that condition.
\item If $F \circ G$ satisfies one of {\rm (Fa)}, {\rm (Co)} and $G$ is full, then $F$ satisfies that condition. 
\item If $F \circ G$ satisfies {\rm (Es)} and $G$ is essentially surjective, then $F$ satisfies {\rm (Es)}.
\end{itemize}
\end{lem}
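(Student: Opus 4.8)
The plan is to prove all four items by elementary diagram chases straight from \cref{defn:extension}, using only the single external fact that every functor preserves isomorphisms. I will treat the three conditions (Fa), (Co), (Es) uniformly within each item, since for a fixed item the arguments for the three conditions all have the same shape; only the ``equation / isomorphism of morphisms / isomorphism of objects'' flavor changes.

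For the first claim, that a composite inherits a property held by both factors, I peel off the two functors in turn. For (Fa): from $F(G(f)) = F(G(g))$, faithfulness of $F$ gives $G(f) = G(g)$, and then faithfulness of $G$ gives $f = g$; the (Co) and (Es) arguments are identical in form, applying the relevant property of $F$ first and of $G$ second. For the second claim, that $G$ inherits a property held by the composite, I push the hypothesis forward through $F$: if $G(f) = G(g)$ then $F(G(f)) = F(G(g))$, so (Fa) for $F \circ G$ forces $f = g$; if $G(f)$ is an isomorphism then $F(G(f))$ is one because $F$ preserves isomorphisms, so (Co) for $F \circ G$ makes $f$ an isomorphism; and if $G(c) \cong G(c')$ then $F(G(c)) \cong F(G(c'))$, so (Es) for $F \circ G$ gives $c \cong c'$.

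The last two claims concern the outer functor $F$, and here one must manufacture objects and morphisms of $\D$ out of data in $\C$. For the third claim, to test (Fa) or (Co) for $F$ on a parallel pair $u, v \maps G(c) \to G(c')$, fullness of $G$ supplies $f, g \maps c \to c'$ with $G(f) = u$ and $G(g) = v$; then $F(u) = F(v)$ reads $F(G(f)) = F(G(g))$, whence (Fa) for $F \circ G$ gives $f = g$ and so $u = v$, and the (Co) case runs identically with ``invertible'' in place of ``equal.'' For the fourth claim, given $F(d) \cong F(d')$, essential surjectivity of $G$ provides $c, c'$ with $d \cong G(c)$ and $d' \cong G(c')$; applying $F$ and composing the resulting isomorphisms (again using that $F$ preserves isomorphisms) yields $F(G(c)) \cong F(G(c'))$, so (Es) for $F \circ G$ gives $c \cong c'$, and hence $d \cong d'$.

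None of this is hard, and the only point that rewards care is the \emph{scope} of the conclusions about the outer functor $F$. Fullness alone, as invoked in the third claim, delivers the property of $F$ only on morphisms between objects lying in the image of $G$; to obtain faithfulness or conservativity of $F$ on \emph{all} of $\D$ one genuinely wants $G$ to be surjective onto objects up to isomorphism as well --- precisely the extra hypothesis made explicit in the fourth claim. Keeping straight which of these surjectivity conditions is needed to reach an arbitrary object of $\D$ is the one place where the bookkeeping matters; everything else is formal.
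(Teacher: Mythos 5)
Your argument is the standard one, and the paper in fact offers no proof of this lemma at all (it is flagged as an ``easy lemma''), so there is no alternative route to compare against; bullets one, two, and four are proved completely and correctly. The one substantive point is your treatment of the third bullet, and here you are right to hesitate: as literally stated, that bullet is false. Fullness of $G$ lets you realize any parallel pair $u,v \maps G(c) \to G(c')$ as $G(f), G(g)$ and conclude $u = v$ (or that $u$ is invertible), but this only establishes faithfulness or conservativity of $F$ on the full image of $G$. A concrete counterexample to the unrestricted claim: let $\C$ be empty (or let $\C$ be the terminal category mapping fully onto a single object of a larger $\D$ whose other hom-sets $F$ collapses); then $G$ is full and $F \circ G$ vacuously faithful and conservative, while $F$ need not be. The correct hypothesis is that $G$ be full \emph{and} essentially surjective, in which case one conjugates an arbitrary parallel pair $u, v \maps d \to d'$ by isomorphisms $G(c) \cong d$, $G(c') \cong d'$ and runs your argument --- exactly the repair you sketch. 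This is a harmless erratum for the paper, since every citation of \cref{lem:extension} in the text invokes only the second or fourth bullet, but your bookkeeping remark is the one place where the lemma genuinely rewards care, and you got it right.
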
 

Another easy lemma concerns the case where $\C$ is a $k$-linear semisimple category: 

\begin{lem}
\label{lem:semisimple_extension}
If $\C, \D$ are $k$-linear categories and $\C$ is semisimple, then a $k$-linear functor $F \maps \C \to \D$ is faithful if and only if for any map $f \maps c \to c'$ between simple objects in $\C$, the condition $F(f) = 0$ implies $f = 0$. The functor $F$ is conservative if and only if for any $f \maps c \to c'$ between simple objects, $f$ is invertible whenever $F(f)$ is invertible. 
\end{lem}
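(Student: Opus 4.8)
The plan is to treat the two biconditionals separately, and in each case the forward implication is immediate by specialization, so the real content lies in the two backward implications. Throughout I will use that a semisimple $k$-linear category $\C$ is abelian and balanced (a morphism that is both monic and epic is invertible), that every object is a finite biproduct of simple objects, and that monos and epis split with kernels and cokernels occurring as direct summands. I will also repeatedly use that a $k$-linear functor $F$ is additive, hence preserves finite biproducts together with their structural inclusions and projections, and in particular carries split monos to split monos and split epis to split epis.

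For faithfulness, the idea is that the backward hypothesis is really a statement about matrix entries. Given an arbitrary $g \maps x \to y$ with $F(g) = 0$, I would decompose $x \cong \bigoplus_i c_i$ and $y \cong \bigoplus_j c'_j$ into simples, with structural inclusions $\iota_i, \iota'_j$ and projections $q_i, q'_j$. Each component $g_{ji} = q'_j \circ g \circ \iota_i \maps c_i \to c'_j$ is a map between simple objects, and $F(g_{ji}) = F(q'_j)\,F(g)\,F(\iota_i) = 0$, so the hypothesis forces $g_{ji} = 0$. Reassembling $g = \sum_{i,j}\iota'_j\, g_{ji}\, q_i = 0$ then gives faithfulness, while the forward direction is the one-line observation that $F(f) = 0 = F(0)$ together with faithfulness gives $f = 0$.

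For conservativity, the crucial preliminary step is to extract from the hypothesis the single fact that $F(S) \neq 0$ for every simple $S$. By Schur's lemma the only informative instances of the hypothesis are endomorphisms of a simple $S$; applying it to the non-invertible morphism $0_S \maps S \to S$ (non-invertible since $S \neq 0$) shows that $0_{F(S)}$ is not invertible, i.e.\ $F(S) \neq 0$. Consequently $F$ sends every nonzero object to a nonzero object, since any nonzero object has a simple retract. Now, given $g \maps x \to y$ with $F(g)$ invertible, I would show $g$ is both monic and epic. For monic: let $j \maps K \hookrightarrow x$ be the split inclusion of $K = \ker g$; then $F(g)F(j) = F(gj) = 0$ with $F(g)$ monic forces $F(j) = 0$, and as $j$ is split monic so is $F(j)$, whence $F(K) = 0$ and therefore $K = 0$. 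The epic case is dual, using $\coker g$. Since $\C$ is balanced, $g$ is then invertible, and the forward direction is again immediate by specializing to simple objects.

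The step that requires the most care is this last reduction. One might initially worry that $F(g)$ invertible only delivers isomorphisms of the shape $F(S)^m \cong F(S)^n$ in $\D$, which need not force $m = n$ without an invariant-basis-number hypothesis on the target. The resolution is to never count dimensions in $\D$ at all, but to argue entirely inside $\C$: because the kernel and cokernel of $g$ split off as genuine direct summands, their nonvanishing would produce a nonzero split sub- or quotient object that $F$ cannot annihilate (as $F$ is nonzero on simples), contradicting invertibility of $F(g)$. This balanced-category argument bypasses all matrix and division-algebra computations and keeps the proof uniform over an arbitrary target $\D$.
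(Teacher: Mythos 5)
Your proposal is correct. Note, however, that the paper states this lemma without giving any proof at all (it is introduced as ``another easy lemma''), so there is no argument of the authors' to compare yours against; what you have written is a complete justification of a claim the paper leaves to the reader. Your treatment of faithfulness---reducing $F(g)=0$ to the vanishing of the matrix entries $q'_j \circ g \circ \iota_i$ between simple summands, which is exactly what the stated hypothesis controls---is the natural argument. For conservativity, your key move is to observe that the hypothesis on maps between simples is only ever used through the single consequence $F(S)\neq 0$ for every simple $S$ (extracted by applying the contrapositive to $0_S$), and then to run the kernel/cokernel argument: $\ker g$ and $\operatorname{coker} g$ split off as direct summands, $F$ kills them because $F(g)$ is monic and epic, hence they vanish, hence $g$ is monic and epic and therefore invertible since $\C$ is balanced. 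Your closing remark is also well taken: the tempting alternative of writing $g$ as a block matrix over the division rings $\operatorname{End}(S)$ and trying to deduce invertibility of each block from invertibility of $F(g)$ would require some control over $\D$ (e.g.\ an invariant-basis-number type property), and your route through split kernels and cokernels inside $\C$ cleanly avoids imposing any hypothesis on $\D$ beyond $k$-linearity. The only implicit inputs are the standard facts that a semisimple $k$-linear category is abelian and balanced with split monos and epis, which you state up front; these hold for the categories to which the paper applies the lemma.
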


\section{Splitting an object of finite dimension}

As a prelude to more general `splitting principles' we study the process of splitting an object of bosonic dimension $N$ into $N$ bosonic line objects, and show that in one key case this process gives an extension of 2-rigs. 

For any field $k$ of characteristic zero and any natural number $N$ there is a map
\[   
\begin{array}{rccc}
j \maps & {k^\ast}^N & \to & \GL(N,k)  \\ \\
& (x_1, \dots, x_N) & \mapsto & 
\left( \begin{array}{cccc} 
x_1 & 0 & \cdots & 0 \\
0 & x_2 & \cdots & 0 \\
\vdots & \vdots & \ddots &  \vdots \\
0 & 0 &  \cdots & x_N \end{array}
 \right)
\end{array}
\]
This map is an algebraic homomorphism between affine groups, so by \cref{cor:Rep_as_a_2-functor}, restricting representations along it induces a 2-rig map
\[   E = \Rep(j) \maps \Rep(\GL(N,k)) \to \Rep({k^\ast}^N). \]
This 2-rig maps sends the representation $k^N$ to the sum $s_1 \oplus \cdots \oplus s_N$. So, we can say it splits $k^N$ into a sum of $N$ bosonic line objects.

We now show that $E$ is a 2-rig extension in the sense of \cref{defn:extension}. This fact can be proved using Young tableaux for instance. More in the spirit of this paper, this fact is also important in the theory of algebraic groups, since $\GL(N,k)$ is perhaps the most fundamental example of a `split reductive' algebraic group, and ${k^\ast}^N$ is its `maximal torus'. The algebraic representations of any split reductive algebraic group are determined up to isomorphism by their restriction to its maximal torus, and this gives the essential injectivity of $E$. We spell this out below.

\begin{lem}
\label{lem:E_ess_inj}
The 2-rig map $E \maps \Rep(\GL(N,k)) \to \Rep({k^\ast}^N)$ is a 2-rig extension.
\end{lem}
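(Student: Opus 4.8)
The plan is to verify the three extension properties (Fa), (Co), (Es) for the restriction functor $E = \Rep(j) \maps \Rep(\GL(N,k)) \to \Rep({k^\ast}^N)$, exploiting that both source and target are semisimple 2-rigs. Since $\GL(N,k)$ in characteristic zero is linearly reductive, $\Rep(\GL(N,k))$ is semisimple, with simple objects the irreducible algebraic representations $W_\lambda$ indexed by dominant weights (equivalently, highest-weight representations parametrized by weakly decreasing integer sequences $\lambda = (\lambda_1 \ge \cdots \ge \lambda_N)$). The target $\Rep({k^\ast}^N) \simeq \T^{\boxtimes N}$ is the 2-rig of $\Z^N$-graded vector spaces by \cref{thm:free_2-rig_on_N_bosonic_lines}, whose simple objects are the one-dimensional weight spaces $\ell^\mu$ for $\mu \in \Z^N$, and on which $E$ acts by sending a $\GL(N,k)$-representation to its underlying torus representation, i.e.\ its weight-space decomposition.

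First I would handle faithfulness and conservativity together using \cref{lem:semisimple_extension}. Because $\Rep(\GL(N,k))$ is semisimple it suffices to check the criterion on maps between simple objects. A map $f \maps W_\lambda \to W_{\lambda'}$ between irreducibles is either zero or (when $\lambda = \lambda'$) a scalar, by Schur's lemma. Restriction to the torus is an exact, faithful forgetful functor on underlying vector spaces: $E(f)$ is literally the same linear map, just regarded as a morphism of graded vector spaces. Hence $E(f) = 0 \iff f = 0$, giving faithfulness, and $E(f)$ invertible $\iff f$ invertible (a nonzero scalar endomorphism of $W_\lambda$ restricts to a nonzero scalar on each weight space), giving conservativity. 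These two steps are routine.

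The substance is essential injectivity (Es), and this is the step I expect to be the main obstacle. The content is exactly the classical fact that an irreducible algebraic representation of $\GL(N,k)$ is determined up to isomorphism by its restriction to the maximal torus ${k^\ast}^N$, i.e.\ by its formal character. The clean way to organize this: since both categories are semisimple, $E(W) \cong E(W')$ forces the multiset of weights (with multiplicities) of $W$ and $W'$ to agree. For irreducibles $W_\lambda, W_{\lambda'}$ I would argue that the highest weight $\lambda$ is recovered from the character as the unique dominant weight that is maximal in the dominance order among weights occurring, so equality of characters forces $\lambda = \lambda'$, hence $W_\lambda \cong W_{\lambda'}$. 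For general semisimple objects $W = \bigoplus_i W_{\lambda_i}$ and $W' = \bigoplus_j W_{\lambda'_j}$, equality of restricted weight-multiset data recovers each multiplicity via the standard triangular (unitriangular) relationship between characters $\chi_\lambda$ and monomial symmetric functions, so the decompositions agree and $W \cong W'$.

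The care needed is that (Es) for $E$ requires injectivity on isomorphism classes of \emph{all} objects, not merely simples, so I must upgrade the highest-weight argument to a linear-independence-of-characters statement: the Weyl characters $\{\chi_\lambda\}$ form a $\Z$-basis of the ring of symmetric Laurent polynomials, so distinct multisets of highest weights yield distinct total characters in $\Rep({k^\ast}^N)$. Equivalently, $K(E) \maps K(\Rep(\GL(N,k))) \to K(\Rep({k^\ast}^N))$ sends $[W_\lambda]$ to $\chi_\lambda$, and linear independence of the $\chi_\lambda$ gives injectivity at the level of $K$-theory; semisimplicity then promotes injectivity-on-$K$-classes to essential injectivity, since in a semisimple category two objects are isomorphic iff they have the same class in the Grothendieck group. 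Thus the whole lemma reduces to the single classical input that Weyl characters are linearly independent, which I would cite from the theory of reductive groups rather than reprove.
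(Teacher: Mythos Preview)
Your proposal is correct and follows essentially the same overall strategy as the paper: for essential injectivity, both reduce to the classical fact that $K(E)$ is injective (you via linear independence of Weyl characters, the paper via Milne's results on split reductive groups), and then both use semisimplicity of $\Rep(\GL(N,k))$ to promote injectivity on $K$-classes to essential injectivity.

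There is one minor organizational difference worth noting for (Fa) and (Co). You invoke \cref{lem:semisimple_extension}, which already requires semisimplicity of the source. The paper instead argues more directly: both $\Rep(\GL(N,k))$ and $\Rep({k^\ast}^N)$ have faithful conservative forgetful functors to $\Set$ (underlying set of the representation), and $E$ commutes with these; by \cref{lem:extension} this immediately gives faithfulness and conservativity of $E$ without appealing to semisimplicity or Schur's lemma. This is exactly the content of your remark that ``$E(f)$ is literally the same linear map,'' so the difference is cosmetic, but the paper's formulation is slightly cleaner since it postpones the use of semisimplicity to the one place it is genuinely needed, namely (Es).
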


\begin{proof} 
That $E$ is faithful and conservative follows from \cref{lem:extension} because we have a commutative diagram
\[
\begin{tikzcd}[column sep=0em]
    \Rep(\M(N,k)) 
    \ar[rr,"E"]
    \ar[dr]
    & &
    \Rep(\GL(N,k))
    \ar[dl]
     \\
   & \Set. 
\end{tikzcd}
\]
where the downwards arrows, the obvious forgetful functors to $\Set$, are both faithful and conservative.

To prove that $E$ is essentially injective, recall that $E$ pulls back representations along the inclusion of algebraic groups
\[    j \maps {k^\ast}^N \to \GL(N,k) .\]
In characteristic zero, an algebraic group $G$ is reductive if and only if it is connected and $\Rep(G)$ is semisimple \cite[Thm.\ 22.42]{MilneAlgGp}. 
In fact the algebraic group $\GL(N,k)$ together with its subgroup $\im(j) \cong {k^\ast}^N$ is split reductive \cite[Ex. 21.6]{MilneAlgGp}. This subsumes the fact that $\GL(N,k)$ is reductive, and it also implies \cite[Thm.\ 22.48]{MilneAlgGp} that $E$ induces an injection of Grothendieck groups
\[     K(\Rep(\GL(N,k))) \to K(\Rep({k^\ast}^N)). \]

Now, suppose that $a,b \in \Rep(\GL(N,k))$ have $E(a) \cong E(b)$. It follows that $[E(a)] = [E(b)]$ in $K(\Rep({k^\ast}^N))$, so we must have $[a] = [b]$ in $K(\Rep(\GL(N,k)))$. This means that $a \oplus c \cong b \oplus c$ for some $c \in \Rep(\GL(N,k))$, but since 
$\Rep(\GL(N,k))$ is semisimple this implies $a \cong b$. Thus $E$ is essentially injective. \end{proof}

The use of results on reductive algebraic groups would be disappointing if one were hoping for a more elementary proof of \cref{lem:E_ess_inj}. We know of no such proof. We have provided detailed references to Milne's textbook because he proves everything from scratch, but the path to what we use above is not a short one. The detour into Grothendieck groups, at least, can be shortcut: Milne really proves the essential injectivity of $E$, but he only states the injectivity of $K(E)$, so to extract the result we need, we used the semisimplicity of $\Rep(\GL(n,k))$. 

However, we really only need \cref{lem:E_ess_inj} in the special case $k = \CC$; see the discussion following \cref{lem:BAi_ess_inj}. Here the essential injectivity of $E$ can be proved more easily, using Lie theory. Since this is the case relevant to the classical splitting principle for complex vector bundles, we sketch the argument here. 

The key is Weyl's `unitarian trick', which exploits this commutative diagram of 2-rigs:
 \[
\begin{tikzcd}
    \Rep(\GL(N,\CC)) 
    \arrow[r, "F"]
    \arrow[d, "E"']
    &
    \Rep(\U(N))
    \arrow[d, "G"]
    \\
    \Rep({\CC^*}^N) \arrow[r, "H"'] &
    \Rep(\U(1)^N)
\end{tikzcd}
\]
Here $\U(N)$ is the subgroup of $\GL(N,\CC)$ consisting of unitary matrices, ${\CC^*}^n$ is the subgroup consisting of invertible diagonal matrices and $\U(1)^N = \U(N) \cap {\CC^*}^N$. Here $\Rep$ denotes the category of complex-algebraic representations for $\GL(N,\CC)$ and ${\CC^*}^N$, as usual in this paper, but for $\U(N)$ and $\U(1)^N$ it stands for the category of continuous finite-dimensional complex representations. All the maps in the diagram arise from restricting representations.

An algebraic representation $\rho \maps \GL(N,\CC) \to \End(V)$ is determined by its restriction to $\U(N)$ since $\U(N)$ is Zariski dense in $\GL(N,\CC)$. A continuous finite-dimensional representation $\rho \maps \U(N) \to \End(V)$ is determined up to isomorphism by its character since $\U(N)$ is a compact Lie group, and its character is determined by its restriction to the diagonal subgroup $\U(1)^N$ since characters are class functions and every unitary matrix is conjugate to a diagonal one. Thus, a representation $\rho \in \Rep(\GL(N,k))$ is determined up to isomorphism by $G(F(\rho)) \in \Rep(\U(1)^N)$. In other words, the composite $G \circ F$ is essentially injective. This implies that $H \circ E$ and thus $E$ is essentially injective.

\section{Splitting an object of finite subdimension}
\label{sec:splitting_finite_subdimension}

Next we study the process of splitting an object of subdimension $N$ into $N$ subline objects. We do this in one key case, namely the object $k^N \in \Rep(\M(N,k))$, and we
show that the 2-rig map 
\[   B \maps \Rep(\M(N,k)) \to \Rep(k^N)  \]
sending $k^N$ to $s_1 \oplus \cdots \oplus s_N$ is an extension of 2-rigs.

To prove this fact, we recall a portion of our main diagram:
\[
\begin{tikzcd}
    \Rep(\M(N,k)) 
    \ar[r,"B"]
    \ar[d,"D"']
    &
    \Rep(k^N) 
    \arrow[d,"C"]
    \\
    \Rep(\GL(N,k))
    \ar[r,"E"']
    & 
    \Rep({k^\ast}^N). 
\end{tikzcd}
\]
We have already seen that $E$ is an extension; we now prove this for the two vertical arrows and then $B$. 

\begin{lem}
\label{lem:C_ess_inj}
The 2-rig map $C \maps \Rep(k^N) \to \Rep({k^\ast}^N)$ is a 2-rig extension.
\end{lem}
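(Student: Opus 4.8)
The plan is to prove this by passing to the concrete descriptions of both 2-rigs. By \cref{thm:free_2-rig_on_N_bosonic_sublines}, $\Rep(k^N)$ is equivalent to the 2-rig of $\N^N$-graded vector spaces of finite total dimension, and by \cref{thm:free_2-rig_on_N_bosonic_lines}, $\Rep({k^\ast}^N)$ is equivalent to the 2-rig of $\Z^N$-graded vector spaces of finite total dimension. Under these equivalences, the inclusion of affine monoids $j \maps {k^\ast}^N \hookrightarrow k^N$ corresponds to the inclusion of commutative monoids $\N^N \hookrightarrow \Z^N$ (recall $k \cong \Spec(k\N)$ and $k^\ast \cong \Spec(k\Z)$ from \cref{ex:M_1} and \cref{ex:GL_1}). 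So the first step is to identify $C$ with the functor that reinterprets an $\N^N$-graded vector space as a $\Z^N$-graded vector space via $\N^N \hookrightarrow \Z^N$, i.e.\ as one whose homogeneous components vanish outside $\N^N$. This identification follows from the comodule description in \cref{lem:comodule}: restriction along $j$ simply postcomposes the coaction $V \to V \otimes k\N^N$ with the inclusion $k\N^N \hookrightarrow k\Z^N$, and since this inclusion is injective no grades are mixed or lost.

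The second step is to observe that this functor is fully faithful. A morphism of $\Z^N$-graded vector spaces between two objects supported on $\N^N$ must preserve each $\Z^N$-homogeneous component; since both objects vanish outside $\N^N$, this is exactly the condition of being a morphism of $\N^N$-graded vector spaces. Faithfulness (Fa) is then immediate, and conservativity (Co) follows because any fully faithful functor reflects isomorphisms.

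For essential injectivity (Es), suppose $C(V) \cong C(W)$. An isomorphism of $\Z^N$-graded vector spaces forces $\dim V_g = \dim W_g$ for every $g \in \Z^N$, hence in particular for every $g \in \N^N$. Since $V$ and $W$ are supported on $\N^N$, this already gives $V \cong W$ as $\N^N$-graded vector spaces. Thus all three conditions of \cref{defn:extension} hold, and $C$ is a 2-rig extension.

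I do not anticipate a genuine obstacle here: once $C$ is identified with the inclusion-induced functor on graded vector spaces, all three properties reduce to bookkeeping with supports and graded dimensions. The only point requiring care is confirming that restriction along $j$ really is this simple inclusion functor rather than something that mixes grades; this is precisely what the factorization of the coaction through $V \otimes k\N^N$ guarantees.
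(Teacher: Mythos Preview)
Your proposal is correct and follows essentially the same route as the paper: identify $\Rep(k^N)$ and $\Rep({k^\ast}^N)$ with $\N^N$- and $\Z^N$-graded vector spaces of finite total dimension via \cref{thm:free_2-rig_on_N_bosonic_sublines} and \cref{thm:free_2-rig_on_N_bosonic_lines}, identify $C$ with the evident inclusion functor, and observe that this functor is fully faithful. The paper then deduces all three properties (Fa), (Co), (Es) directly from full faithfulness, whereas you prove essential injectivity separately by a dimension count; since full faithfulness already implies essential injectivity, your extra argument is redundant but harmless.
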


\begin{proof} 
This follows from Theorems \ref{thm:free_2-rig_on_N_bosonic_sublines} and \ref{thm:free_2-rig_on_N_bosonic_lines}: $\Rep(k^N)$ is the category of finite-dimensional $\N^N$-graded vector spaces, $\Rep({k^\ast}^N)$ is the category of finite-dimensional $\Z^N$-graded vector spaces, and $B$ is the forgetful functor from the former to the latter. This functor is not only faithful but full, so it is conservative and essentially injective.
\end{proof}

\begin{lem}
\label{lem:D_ess_inj}
The 2-rig map $D \maps \Rep(\M(N,k)) \to \Rep(\GL(N,k))$ is a 2-rig extension.
\end{lem}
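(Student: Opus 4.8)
The plan is to show that $D$ is in fact \emph{fully faithful}, from which all three conditions (Fa), (Co), (Es) follow formally. By \cref{cor:Rep_as_a_2-functor} the map $D$ arises from the inclusion of affine monoids $\GL(N,k) \hookrightarrow \M(N,k)$, and by \cref{lem:comodule} I may work throughout with finite-dimensional comodules. This inclusion corresponds to the map of coordinate bialgebras
\[ \iota \maps \mathcal{O}(\M(N,k)) \to \mathcal{O}(\GL(N,k)), \]
which is the localization $k[e_{ij}] \to k[e_{ij}][\det^{-1}]$ (see \cref{ex:MNk} and \cref{ex:GL}); under this identification $D$ sends a comodule $\eta \maps V \to V \otimes \mathcal{O}(\M(N,k))$ to the comodule $(1_V \otimes \iota)\circ \eta$ on the same underlying vector space.

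First I would record the one genuinely geometric input: $\iota$ is injective. Indeed $\mathcal{O}(\M(N,k)) = k[e_{ij}]$ is an integral domain and $\iota$ is localization at the nonzero element $\det$, so nothing nonzero is killed. (This is exactly the statement that $\GL(N,k)$ is Zariski dense in $\M(N,k)$.) Since we work over a field, tensoring with a fixed finite-dimensional space $W$ is exact, so $1_W \otimes \iota \maps W \otimes \mathcal{O}(\M(N,k)) \to W \otimes \mathcal{O}(\GL(N,k))$ is again injective.

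Next I would deduce full faithfulness. A linear map $f \maps V \to W$ underlies a morphism of $\GL(N,k)$-comodules $DV \to DW$ exactly when $(1_W\otimes\iota)\bigl(\eta_W\circ f - (f\otimes 1)\circ\eta_V\bigr)=0$; by injectivity of $1_W\otimes\iota$ this forces $\eta_W\circ f = (f\otimes 1)\circ\eta_V$, i.e. $f$ is already a morphism of $\M(N,k)$-comodules. As $D$ is the identity on underlying linear maps, its action on each hom-set is thus a bijection, so $D$ is fully faithful. A fully faithful functor is automatically faithful; it is conservative and essentially injective because any isomorphism $g \maps DV \to DW$ and its inverse lift uniquely to maps in $\Rep(\M(N,k))$ whose composites map to identities and hence \emph{are} identities, yielding $V \cong W$. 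Alternatively, (Fa) and (Co) can be obtained exactly as for $E$ in \cref{lem:E_ess_inj}, by factoring the faithful and conservative forgetful functor $\Rep(\M(N,k)) \to \Set$ through $D$ and invoking \cref{lem:extension}.

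The main obstacle here is, happily, very light: unlike the essential injectivity of $E$, which required the theory of split reductive groups, the entire content of this lemma is the single algebraic fact that localizing the domain $k[e_{ij}]$ at $\det$ is injective. The only points needing care are verifying that $D$ really is the pushforward-of-comodules functor along $\iota$, and that the comodule-compatibility equation linearizes as claimed after applying $1_W\otimes\iota$; once these bookkeeping matters are settled, the result is formal and, notably, gives the stronger conclusion that $D$ is fully faithful.
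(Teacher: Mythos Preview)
Your proof is correct and rests on the same mathematical content as the paper's: the injectivity of the localization map $\iota$, which is the comodule-side rephrasing of the Zariski density of $\GL(N,k)$ in $\M(N,k)$ that the paper invokes. The paper treats (Fa) and (Co) via the forgetful-functor trick of \cref{lem:E_ess_inj} and then argues (Es) by noting that a conjugating isomorphism on $\GL(N,k)$ extends by density to $\M(N,k)$; you package all three at once by proving the stronger statement that $D$ is fully faithful, from which (Fa), (Co), (Es) drop out formally. Your route is a bit cleaner and yields a slightly sharper conclusion, but the essential idea is identical.
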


\begin{proof}
That $D$ is faithful and conservative follows from same argument as in Lemma \ref{lem:E_ess_inj}. For essential injectivity, suppose that $D(\rho) \cong D(\rho')$ where $\rho \maps \M(N,k) \to \End(V)$, $\rho \maps \M(N,k) \to \End(V')$ are algebraic representations. Concretely this means that for some isomorphism $\phi \maps V \to V'$ we have $\rho'(g) = \phi \rho(g) \phi^{-1}$ for all $g \in \GL(N,k)$. Since $\GL(N,k)$ is Zariski dense in $\M(N,k)$ the same equation holds for all $g \in \M(N,k)$, so $\rho' \cong \rho$.
\end{proof}

\begin{lem}
\label{lem:B_ess_inj}
The 2-rig map $B \maps \Rep(\M(N,k)) \to \Rep(k^N)$ is a 2-rig extension.
\end{lem}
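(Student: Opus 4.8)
The plan is to deduce that $B$ is an extension purely formally from the fact that the other three maps in the square are extensions, together with the two-out-of-three style bookkeeping of \cref{lem:extension}. The key observation is that the square of restriction maps commutes up to natural isomorphism, so that
\[ C \circ B \;\cong\; E \circ D \]
as 2-rig maps $\Rep(\M(N,k)) \to \Rep({k^\ast}^N)$.

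First I would record that $E \circ D$ is an extension. By \cref{lem:D_ess_inj} the map $D$ is an extension and by \cref{lem:E_ess_inj} the map $E$ is an extension, so each of them satisfies all three of the conditions (Fa), (Co), (Es). Applying the first bullet of \cref{lem:extension} to each condition in turn shows that the composite $E \circ D$ satisfies all three, i.e.\ is an extension. By commutativity of the square, $C \circ B$ is therefore also an extension.

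Next I would conclude that $B$ itself is an extension. Writing the composite as $C \circ B$ and invoking the second bullet of \cref{lem:extension} with $G = B$ and $F = C$: since $C \circ B$ satisfies each of (Fa), (Co), (Es), the first factor $B$ satisfies each of them as well. Hence $B$ is a 2-rig extension, as claimed. Note that this argument uses only that $C \circ B$ is an extension; the fact that $C$ is itself an extension (\cref{lem:C_ess_inj}) is not even needed, although it is reassuring.

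The only point requiring verification beyond citing earlier results is the commutativity of the square up to natural isomorphism. This holds because all four maps are given by restricting algebraic representations along the commutative square of affine-monoid inclusions relating $\M(N,k)$, $\GL(N,k)$, $k^N$ and ${k^\ast}^N$; functoriality of $\Rep$ (\cref{cor:Rep_as_a_2-functor}) turns that commuting square of inclusions into the commuting square of 2-rig maps, and this was already established when the square was introduced. So I expect essentially no obstacle in this final step: the genuine difficulty in showing $B$ is an extension has been entirely absorbed into the preceding lemmas, in particular the reductivity and semisimplicity input behind the essential injectivity in \cref{lem:E_ess_inj}.
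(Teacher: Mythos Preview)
Your proposal is correct and follows essentially the same route as the paper: use the commutativity of the square to identify $C \circ B$ with $E \circ D$, conclude the latter is an extension from \cref{lem:E_ess_inj} and \cref{lem:D_ess_inj}, and then apply the second clause of \cref{lem:extension} to deduce that $B$ is an extension. Your observation that \cref{lem:C_ess_inj} is not actually needed here is also correct.
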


\begin{proof}
By \cref{lem:extension} it suffices to show the composite $C \circ B$ is a 2-rig extension. Since this composite is naturally isomorphic to $E \circ D$, this follows from the fact that $D$ and $E$ are 2-rig extensions (Lemmas \ref{lem:D_ess_inj} and \ref{lem:E_ess_inj}).
\end{proof}

We conclude this sections with some results, relying on those above, that we shall use to prove the splitting principle. For these we slightly extend our main diagram:
\[
\begin{tikzcd}
    \ksbar_{\le n} 
      \arrow[r,"i"]
    &
    \ksbar
    \arrow[r,"A"]
    & 
    \Rep(\M(N,k)) 
    \ar[r,"B"]
    \ar[d,"D"']
    &
    \Rep(k^N) 
    \arrow[d,"C"]
    \\
    & &
    \Rep(\GL(N,k))
    \ar[r,"E"']
    & 
    \Rep({k^\ast}^N) 
\end{tikzcd}
\]
Here $\ksbar_{\le n}$ is the category of finite-dimensional representations of the groupoid of sets with at most $n$ elements, which we call $\S_{\le n}$. The functor
\[i \maps \ksbar_{\le n} \longrightarrow \ksbar \]
extends any finite-dimensional representation of $\S_{\le n}$ to one that assigns the zero-dimensional space to any set of cardinality $> n$. Note that $i$ is not a 2-rig map, just a $k$-linear functor between Cauchy complete linear categories. 

\begin{thm}
\label{thm:Ai_ess_inj}
The following composite functor is an extension when $n \le N$:
\[ \begin{tikzcd}
    \overline{k\S}_{\le n} 
      \arrow[r,"i"]
    &
    \overline{k\S}
    \arrow[r,"A"]
    & 
    \Rep(\M(N,k)).
\end{tikzcd}
\]
\end{thm}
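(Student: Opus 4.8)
The plan is to exploit the semisimplicity of the source and to replace the target by a category so concrete that isomorphism can be detected by a Poincaré polynomial. Recall that $\ksbar_{\le n}$ is the coproduct of the categories $\Rep(S_m)$ for $0 \le m \le n$, hence is a semisimple $k$-linear category whose simple objects are the Schur objects $S_\lambda$ indexed by partitions with $|\lambda| \le n$. Since $A$ is a 2-rig map sending the generator $x$ to the tautologous object $k^N$, and 2-rig maps preserve Schur functors, the composite $G := A \circ i$ sends $S_\lambda$ to $S_\lambda(k^N)$. The hypothesis $n \le N$ enters exactly here: every $\lambda$ in sight has at most $\ell(\lambda) \le |\lambda| \le n \le N$ rows, which is precisely the condition that will make these images nonzero and mutually distinguishable.

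First I would reduce to an elementary target. Post-composing with $B \maps \Rep(\M(N,k)) \to \Rep(k^N)$ and invoking the second clause of \cref{lem:extension}, it suffices to prove that $B \circ G$ is an extension, for then its inner factor $G$ is automatically one. Using \cref{thm:free_2-rig_on_N_bosonic_sublines} I identify $\Rep(k^N)$ with the 2-rig of finite-dimensional $\N^N$-graded vector spaces, under which $B \circ G$ sends $S_\lambda$ to $S_\lambda(s_1 \oplus \cdots \oplus s_N)$, whose graded dimension is recorded by the Schur polynomial $s_\lambda(t_1, \dots, t_N)$ in the $N$ variables tracking the multidegree. Because $\lambda$ has at most $N$ rows, this Schur polynomial---and hence the object $(B \circ G)(S_\lambda)$---is nonzero.

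For faithfulness and conservativity I would appeal to \cref{lem:semisimple_extension}: on the semisimple category $\ksbar_{\le n}$ it is enough to examine morphisms between simple objects. Between non-isomorphic $S_\lambda$ and $S_\mu$ the only morphism is zero, while the endomorphism algebra of each $S_\lambda$ is $k \cdot 1$; since $(B \circ G)(S_\lambda) \ne 0$, the functor $B \circ G$ sends a nonzero scalar endomorphism to a nonzero (indeed invertible) map, giving both (Fa) and (Co) at once.

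The crux is essential injectivity, and this is where the genuinely representation-theoretic input is needed. Writing an object as $c \cong \bigoplus_\lambda S_\lambda^{\oplus m_\lambda}$, its image under $B \circ G$ is a graded vector space with Poincaré polynomial $\sum_\lambda m_\lambda\, s_\lambda(t_1, \dots, t_N)$, and isomorphic graded vector spaces have equal Poincaré polynomials. The multiplicities $m_\lambda$ can therefore be recovered provided the Schur polynomials $\{\, s_\lambda(t_1, \dots, t_N) : \ell(\lambda) \le N \,\}$ are linearly independent---the classical fact that they form a basis for the symmetric polynomials in $N$ variables. Hence $(B \circ G)(c) \cong (B \circ G)(c')$ forces $m_\lambda = m'_\lambda$ for all $\lambda$, so $c \cong c'$, completing the argument. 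The only nontrivial ingredient, the linear independence of these Schur polynomials (equivalently, the distinctness of the associated Weyl modules), is the anticipated obstacle; everything else is bookkeeping with semisimple categories and the composition lemmas.
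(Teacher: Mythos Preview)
Your argument is correct and takes a genuinely different route from the paper's.

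The paper proves \cref{thm:Ai_ess_inj} by first reducing to each grade $\ksbar_m$ separately (via a block-diagonal argument using scalar matrices in $\M(N,k)$), and then showing that on each grade the functor is in fact \emph{fully faithful}, ultimately by establishing a Schur--Weyl-type isomorphism
\[
kS_m \xrightarrow{\;\sim\;} [\M(N,k),\Vect](V^{\otimes m}, V^{\otimes m})
\]
by an explicit elementary calculation.  Your approach instead composes with $B$ and works in the concrete target $\Rep(k^N)\simeq\A^{\boxtimes N}$, reducing faithfulness and conservativity to the nonvanishing $S_\lambda(s_1\oplus\cdots\oplus s_N)\neq 0$ via \cref{lem:semisimple_extension}, and reducing essential injectivity to the linear independence of the Schur polynomials $s_\lambda(t_1,\dots,t_N)$ for $\ell(\lambda)\le N$.

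What each approach buys: your argument is shorter and more transparent, importing one classical fact about symmetric polynomials and otherwise using only semisimplicity and the composition lemma. The paper's argument is longer but entirely self-contained (the Schur--Weyl calculation is done from scratch) and yields the stronger conclusion that the restriction to each grade is fully faithful into $\Rep(\M(N,k))$, not merely an extension after composing with $B$. Both avoid the deeper input from the theory of reductive groups that underlies \cref{lem:E_ess_inj}; indeed your argument shows that \cref{lem:BAi_ess_inj} can be obtained directly, without passing through \cref{lem:B_ess_inj} at all.
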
 

The proof will be broken up into a series of lemmas.  First we introduce $\overline{k\S}_m$, which is the category of finite-dimensional representations of the groupoid of $m$-element sets---or equivalently, the category of finite-dimensional representations of the group $S_m$.  Whenever $m \le n$, we have an inclusion of Cauchy complete linear categories
\[          \overline{k\S}_m \longrightarrow \overline{k\S}_{\le n} \]
extending any such representation to one that assigns the zero-dimensional space to any set of cardinality $\ne m$.

\begin{lem}
Suppose that the restriction of $A \circ i \maps \ksbar_{\le n} \to \Rep(\M(N, k))$ to $\ksbar_m$ is an extension when $m \le n \le N$.  Then $A \circ i$ is an extension when $n \le N$.
\end{lem}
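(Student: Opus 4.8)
The plan is to exploit the fact that $\ksbar_{\le n}$ is semisimple and decomposes as the direct sum $\bigoplus_{m=0}^{n} \ksbar_m$ of its blocks. Indeed the groupoid $\S_{\le n}$ is the coproduct of the one-object groupoids $BS_0, \dots, BS_n$, so $\ksbar_{\le n} \simeq \bigoplus_{m=0}^n \ksbar_m$ as $k$-linear categories; the simple objects are exactly the irreducible representations of the groups $S_m$ for $m \le n$, each living in a single block $\ksbar_m$, and each inclusion $\ksbar_m \hookrightarrow \ksbar_{\le n}$ is full. I would obtain (Fa) and (Co) directly from this block structure together with the per-block hypothesis, whereas (Es) requires an extra ingredient—a grading on the target—and is the real obstacle.

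For faithfulness and conservativity I would invoke \cref{lem:semisimple_extension}. Since $\ksbar_{\le n}$ is semisimple it suffices to test both conditions on morphisms $f \maps c \to c'$ between simple objects. If $c$ and $c'$ lie in different blocks (or are non-isomorphic simples) then $\hom(c,c') = 0$ and the condition is vacuous. If they lie in a common block $\ksbar_m$, then, because that inclusion is full, $f$ and its image agree with those computed for the restriction $A \circ i|_{\ksbar_m}$, which is an extension by hypothesis; hence $A \circ i(f)=0$ forces $f=0$, and $f$ is invertible whenever $A\circ i(f)$ is. Thus $A \circ i$ is faithful and conservative.

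The essential injectivity is the main point. The key observation is that $\Rep(\M(N,k))$ carries a canonical $\N$-grading by polynomial degree, induced by restricting along the central scalar submonoid $k \hookrightarrow \M(N,k)$, $t \mapsto t\,\mathrm{Id}$: each object $V$ splits as $V = \bigoplus_m V^{(m)}$, where $t\,\mathrm{Id}$ acts on $V^{(m)}$ by $t^m$, and—because the scalars are central—every morphism of $\M(N,k)$-representations preserves this decomposition (here characteristic zero, and hence $k$ infinite, is used to separate the distinct weights $t \mapsto t^m$). I would then check that $A \circ i$ carries each block $\ksbar_m$ into homogeneous degree $m$: an object of $\ksbar_m$ is a representation $\rho$ of $S_m$, and $A \circ i(\rho) \cong \rho \otimes_{k[S_m]} (k^N)^{\otimes m}$, on which $t\,\mathrm{Id}$ acts as $t^m$.

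Granting this, essential injectivity follows formally. Writing $c \cong \bigoplus_m c_m$ and $c' \cong \bigoplus_m c'_m$ with $c_m, c'_m \in \ksbar_m$, and using that $A\circ i$ preserves direct sums, the degree-$m$ part of $A\circ i(c)$ is $A\circ i(c_m)$, and likewise for $c'$. Any isomorphism $A\circ i(c) \cong A\circ i(c')$ preserves the grading, so it restricts to isomorphisms $A\circ i(c_m) \cong A\circ i(c'_m)$ in each degree; applying the essential injectivity of $A\circ i|_{\ksbar_m}$ supplied by the hypothesis gives $c_m \cong c'_m$ for every $m$, whence $c \cong c'$. The only substantive work is establishing the grading and its preservation by morphisms; everything else is bookkeeping across the finitely many blocks.
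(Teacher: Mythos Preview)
Your proposal is correct and takes essentially the same approach as the paper. For (Fa) and (Co) both you and the paper invoke \cref{lem:semisimple_extension}; for (Es) both arguments hinge on the action of central scalars---you package this as an $\N$-grading on $\Rep(\M(N,k))$ via the central inclusion $k \hookrightarrow \M(N,k)$, while the paper performs the equivalent direct computation, showing that for any equivariant $f$ the off-diagonal block components $f_{ij}$ vanish because $\lambda^j f_{ij} = \lambda^i f_{ij}$ for scalar $\lambda$.
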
 

\begin{proof}
\cref{lem:semisimple_extension} implies that if for each
$m \le n \le N$ the restriction of $A \circ i$ to $\ksbar_m$ is faithful and conservative, then the same holds for $A \circ i$.  Thus we only need to show this: if for each $m \le n \le N$ the restriction of $A \circ i$ to $\ksbar_m$ is essentially injective, then the same holds for $A \circ i$.

Let $\rho, \pi$ be objects of $\overline{k\S}_{\le n}$. Write $\rho = \rho_0 \oplus \cdots \oplus \rho_n$ where $\rho_j$ is an $S_j$-representation for $1 \le j \le n$, and similarly write $\pi = \pi_0 \oplus \cdots \oplus \pi_n$. Let $V = k^N$ be the tautological $\M(N, k)$-representation. Then 
\[
(A \circ i)(\rho) = \bigoplus_{1 \le j \le n} \rho_j \otimes_{kS_j} V^{\otimes j}
\]
with a similar expression for $(A \circ i)(\pi)$. Abbreviating the summands in the display above as $\widetilde{\rho_j}(V)$, suppose we have an $\M(N, k)$-equivariant map 
\[
    f \maps \widetilde{\rho_0}(V) \oplus \cdots \oplus \widetilde{\rho_n}(V) \to \widetilde{\pi_0}(V) \oplus \cdots \oplus \widetilde{\pi_n}(V),
\]
and denote the block components as $f_{ij} \maps \widetilde{\rho_i}(V) \to \widetilde{\pi_j}(V)$. We now show that $f_{ij} = 0$ if $i \neq j$.  For a scalar $\lambda \in k$, abuse notation slightly by letting $\lambda \in \M(N, k)$ also denote the corresponding scalar multiple of the identity on $V$. Then 
\[
\widetilde{\rho_i}(\lambda) = \lambda^i \maps \widetilde{\rho_i}(V) \to \widetilde{\rho_i}(V)
\]
and similarly $\widetilde{\pi_j}(\lambda) = \lambda^j$. By equivariance, $\lambda^j f_{ij} = f_{ij} \lambda^i = \lambda^i f_{ij}$. For $i \neq j$, this forces $f_{ij} = 0$. 

Thus, an equivariant map $f$ must be in block diagonal form $f_{11} \oplus \cdots \oplus f_{nn}$. As a result, $f$ is an isomorphism if and only if each of its block components $f_{mm}$ is an isomorphism. Thus, if the restriction of $A \circ i$ to $\ksbar_m$ is essentially injective for
each $m \le n$, then the same holds for $A \circ i$.
\end{proof}

Thanks to the preceding lemma, to prove \cref{thm:Ai_ess_inj} it suffices to show that the restriction of $A \circ i$ to $\ksbar_m$ is an extension whenever $m \le n \le N$.  This amounts to showing that under these conditions, the functor sending $\rho \in \ksbar_m$ to 
\[  \tilde{\rho}(V) = \rho_m \otimes_{kS_j} V^{\otimes m} \in [\M(N,k),\Vect] \]
is an extension.   In fact we prove a stronger result: this functor is fully faithful.   We need to show this statement whenever $m \le n \le N$, but $n$ does not appear in this statement so there is no loss of generality in taking $n = m$, which we do below.

Let $[\M(N, k), \Vect]$ denote the category of all linear representations of the monoid $\M(N, k)$.  The forgetful functor
\[
\Rep(\M(N, k)) \to [\M(N, k), \Vect]
\]
is clearly faithful, which leads to the following lemma.

\begin{lem}
If the composite functor 
    \[
    \ksbar_n \to \Rep(\M(N, k)) \to [\M(N, k), \Vect]
    \]
    is fully faithful, then $\ksbar_n \to \Rep(\M(N, k))$ is fully faithful. 
\end{lem}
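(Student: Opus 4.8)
The plan is to recognize this as a purely formal statement about functors that requires no special structure of the categories involved beyond one fact already established just before the statement: the forgetful functor $U \maps \Rep(\M(N,k)) \to [\M(N,k),\Vect]$ is faithful. Write $G \maps \ksbar_n \to \Rep(\M(N,k))$ for the first functor, so that the hypothesis reads ``$U \circ G$ is fully faithful'' and the goal is ``$G$ is fully faithful''.

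First I would dispatch faithfulness of $G$ with no extra work: since $U \circ G$ is fully faithful it is in particular faithful, and by \cref{lem:extension} faithfulness of a composite implies faithfulness of the first factor $G$. The remaining content is fullness of $G$, and this is precisely where faithfulness of $U$ earns its keep. Given an arbitrary morphism $\psi \maps G(x) \to G(y)$ in $\Rep(\M(N,k))$, I would apply $U$ to obtain a morphism $U(\psi) \maps (U \circ G)(x) \to (U \circ G)(y)$. Fullness of $U \circ G$ then produces some $\phi \maps x \to y$ in $\ksbar_n$ with $(U \circ G)(\phi) = U(\psi)$, that is, $U(G(\phi)) = U(\psi)$. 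Since $U$ is faithful, we may cancel it on the left to conclude $G(\phi) = \psi$, exhibiting $\psi$ in the image of $G$. Thus $G$ is full, and together with faithfulness, fully faithful.

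There is essentially no obstacle here: the argument is a one-line diagram chase, and the only nontrivial ingredient is the faithfulness of $U$, whose sole role is to let us cancel it on the left in the equation $U(G(\phi)) = U(\psi)$. The reason the lemma is worth isolating is strategic rather than technical: it lets us reduce full faithfulness of the representation-theoretic functor $\ksbar_n \to \Rep(\M(N,k))$ to the same claim for the composite landing in $[\M(N,k),\Vect]$, where objects are merely vector spaces carrying a monoid action and the relevant hom-spaces---spaces of natural (equivariant) maps between tensor-power-type functors---are far more transparent to compute with directly.
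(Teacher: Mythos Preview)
Your proof is correct and takes essentially the same approach as the paper: both exploit faithfulness of $U$ to cancel it on the left. The paper packages this as the abstract observation that if $h$ is monic and $h \circ k$ is an isomorphism then $k$ is an isomorphism, applied to the induced maps on hom-sets; your version simply unpacks that observation element by element.
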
 

\begin{proof}
In any category, $k \maps A \to B$ is an isomorphism if $h \circ k \maps A \to C$ is an iso for some mono $h \maps B \to C$.  The result follows by applying this observation to the evident maps between hom-sets, where full faithfulness corresponds to such maps being isos, and faithfulness corresponds to such maps being monos.  
\end{proof} 

By this last lemma, the proof of \cref{thm:Ai_ess_inj} has now been reduced to showing the functor $\ksbar_n \to [\M(N, k), \Vect]$ is fully faithful. The category $\ksbar_n$ is the Cauchy completion of the one-object linear category $kS_n$. Here, the endo-hom of this object is the regular representation of $S_n$, and applying the functor $\rho \mapsto \widetilde{\rho}(V)$ to this representation we obtain 
\[
kS_n \otimes_{kS_n} V^{\otimes n} \cong V^{\otimes n}.
\]

\begin{lem}
If the restriction of $\ksbar_n \to [\M(N, k), \Vect]$ to $kS_n$ is fully faithful, i.e., if the canonical map 
\[
   R \maps kS_n \to [\M(N, k), \Vect](V^{\otimes n}, V^{\otimes n})
\]
is an isomorphism, then $\overline{kS_n} \to [\M(N, k), \Vect]$ is fully faithful. 
\end{lem}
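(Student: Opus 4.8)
The functor in question, write it $\Psi \maps \overline{kS_n} \to [\M(N,k),\Vect]$, $\rho \mapsto \rho \otimes_{kS_n} V^{\otimes n}$, is $k$-linear and restricts on the generating object $\ast = kS_n$ to the map $R$, since $kS_n \otimes_{kS_n} V^{\otimes n} \cong V^{\otimes n}$. Thus $\Psi$ is the canonical extension of $R$ to the Cauchy completion, and the plan is to prove the general principle that full faithfulness of a $k$-linear functor into a Cauchy complete target propagates from a linear category to its Cauchy completion. The target $[\M(N,k),\Vect]$ is abelian, hence closed under finite biproducts and the splitting of idempotents, so it is Cauchy complete; and $\Psi$, being $k$-linear into such a category, automatically preserves finite biproducts and idempotent splittings. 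Because every object of $\overline{kS_n}$ is a retract of a finite biproduct $\ast^{\oplus p}$, it suffices to verify full faithfulness of $\Psi$ on objects of these two forms, which I would do in two steps, biproducts first and retracts second.

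For the first step, the hom-space $\overline{kS_n}(\ast^{\oplus p}, \ast^{\oplus q})$ is the space of $q \times p$ matrices with entries in $kS_n$, and since $\Psi$ preserves biproducts the induced map on this hom-space is the entrywise application of $R$, composed with the biproduct-coherence isomorphisms on the target. As $R$ is assumed to be an isomorphism, this map is an isomorphism, so $\Psi$ is fully faithful on all finite biproducts of $\ast$.

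For the second step, an arbitrary pair of objects of $\overline{kS_n}$ is, up to isomorphism, a pair of retracts $(\ast^{\oplus p}, e)$ and $(\ast^{\oplus q}, f)$ cut out by idempotents $e, f$. The hom-space between them is the image of the idempotent operator ``postcompose with $f$, precompose with $e$'' acting on $\overline{kS_n}(\ast^{\oplus p}, \ast^{\oplus q})$, and likewise on the target with $\Psi(e), \Psi(f)$ in place of $e, f$. Since $\Psi$ is a $k$-linear functor it intertwines these two idempotent operators, so the isomorphism of hom-spaces established in the first step restricts to an isomorphism of their images. This gives full faithfulness on retracts, hence on all of $\overline{kS_n}$.

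The argument is entirely formal once the hypothesis that $R$ is an isomorphism is granted: the genuine content lies in the preceding reductions and in that hypothesis itself (a Schur--Weyl-type statement), not here. The step requiring the most care is the retract step, where one must check that $\Psi$ genuinely intertwines the conjugation-by-idempotents operators and that the biproduct step has already supplied an isomorphism to restrict; apart from this bookkeeping, and the initial verification that $\Psi$ is indeed the Cauchy-completion extension of $R$, I anticipate no serious obstacle.
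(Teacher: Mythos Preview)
Your proposal is correct and gives a clean, elementary proof. The paper takes a different route: rather than working explicitly with biproducts and retracts, it proves the general enriched-category fact by embedding both source and target into presheaf categories via Yoneda, forming the left Kan extension $PF \maps [\C\op,\Vect] \to [\D\op,\Vect]$, and showing that the unit of the adjunction $PF \dashv F^\ast$ is an isomorphism (a coend computation using the assumed full faithfulness of $F$). This is more abstract machinery for the same conclusion.

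Your approach has the advantage of being entirely concrete and self-contained: it exploits directly the finitary nature of Cauchy completion in the $k$-linear setting, where every object is visibly a retract of a finite biproduct of generators, and the argument requires nothing beyond matrix algebra and the idempotent-image description of retract homs. The paper's approach, by contrast, is phrased so as to apply uniformly across enriched bases, and it avoids any explicit description of the objects of $\overline{\C}$---but at the cost of invoking presheaf categories, Kan extensions, and coends for what is, in this instance, a one-object starting category. For the specific lemma at hand your argument is arguably the more transparent one; the paper's version trades that transparency for generality.
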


\begin{proof}
More generally, if $F \maps \C \to \D$ is a fully faithful $k$-linear functor and $\D$ is Cauchy complete, then the functor $F' \maps \overline{\C} \to \D$ obtained by extending $F$ to the Cauchy completion $\overline{\C}$ is also fully faithful. Indeed, this is a general fact in enriched category theory.  To prove it in the $k$-linear case, first note that  we have a square of enriched functors, commuting up to isomorphism
    \[
    \begin{tikzcd}
        \overline{\C} 
        \ar[d, swap, "{F'}"] 
        \ar[r, "y_C"] 
        & 
        {[\overline{\C}\op, \Vect]} 
        \ar[r, "\sim"] 
        & 
        {[\C\op, \Vect]} 
       \ar[d, "PF"]
        \\ 
        \D 
        \ar[rr, "y_D"] 
        & &
        {[\D\op, \Vect]}
    \end{tikzcd}
    \]
of functors, commuting up to isomorphism, where $PF$ is obtained by left Kan extending the composite 
    \[
    \C \xlongrightarrow{F} \D \xlongrightarrow{y_D} [\D\op, \Vect]
    \]
along the Yoneda embedding $y_C \maps \C \to [\C\op, \Vect]$.  In the above square the horizontal arrows are fully faithful by the enriched Yoneda lemma.  Thus, if we can show $PF$ is fully faithful, we can conclude that $F'$ is fully faithful as well.

Using the adjunction $PF \dashv F^\ast = [F\op, \Vect]$,  the theory of  coreflective enriched subcategories implies that $PF$ is fully faithful if the unit of this adjunction is an isomorphism.  For any $W \in [\C\op,\Vect]$ the unit acts as follows:
    \[
    W \cong \int^c Wc \cdot \C(-, c) \to \int^c Wc \cdot D(F-, Fc) \cong (F^\ast \circ PF) W.
    \]
The middle arrow is an isomorphism by the full faithfulness of $F$, 
so the unit is indeed an isomorphism.    \end{proof}

The proof of \cref{thm:Ai_ess_inj} is therefore complete once we prove our final lemma: 

\begin{lem}
     If $V = k^N$, the canonical map 
    \[
    kS_n \to [\M(N, k), \Vect](V^{\otimes n}, V^{\otimes n})
    \]
    is an isomorphism. 
\end{lem}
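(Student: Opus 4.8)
This is the classical Schur--Weyl duality statement for the monoid $\M(N,k) = \mathrm{End}(V)$, and the plan is to prove it by recognizing the target hom-space as a centralizer algebra and then applying the double centralizer theorem. The map $R$ sends a permutation $\sigma \in S_n$ to the operator $P_\sigma \maps V^{\otimes n} \to V^{\otimes n}$ permuting tensor factors. A morphism in $[\M(N,k),\Vect]$ from $V^{\otimes n}$ to itself is a linear map commuting with $g^{\otimes n}$ for every $g \in \M(N,k)$, so
\[
[\M(N,k),\Vect](V^{\otimes n},V^{\otimes n}) = \{g^{\otimes n} : g \in \mathrm{End}(V)\}' ,
\]
the centralizer in $\mathrm{End}(V^{\otimes n})$ of the image of the monoid. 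Since $(gh)^{\otimes n} = g^{\otimes n}h^{\otimes n}$ and $1^{\otimes n}$ is the identity, this image is a unital multiplicative set, so its centralizer equals $\mathcal{A}'$, where $\mathcal{A} := \mathrm{span}\{g^{\otimes n} : g \in \mathrm{End}(V)\}$ is a subalgebra. Writing $\mathcal{B} := \im\big(R \maps kS_n \to \mathrm{End}(V^{\otimes n})\big)$ for the algebra of permutation operators, the claim becomes the pair of statements $kS_n \xrightarrow{\sim} \mathcal{B}$ and $\mathcal{B} = \mathcal{A}'$.

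\emph{Injectivity of $R$.} Because $N \ge n$, I would choose $n$ distinct basis vectors $e_1,\dots,e_n$ of $V = k^N$. The vectors $P_\sigma(e_1 \otimes \cdots \otimes e_n) = e_{\sigma^{-1}(1)} \otimes \cdots \otimes e_{\sigma^{-1}(n)}$ are pairwise distinct basis vectors of $V^{\otimes n}$, hence linearly independent, so $\sum_\sigma c_\sigma P_\sigma = 0$ forces all $c_\sigma = 0$. Thus $R$ is injective and $kS_n \cong \mathcal{B}$. (This is exactly where the hypothesis $n \le N$ is used.)

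\emph{The centralizer computation.} Under the canonical identification $\mathrm{End}(V^{\otimes n}) \cong \mathrm{End}(V)^{\otimes n}$, conjugation $f \mapsto P_\sigma f P_\sigma^{-1}$ corresponds precisely to permuting the $n$ tensor factors of $\mathrm{End}(V)^{\otimes n}$, since $P_\sigma(v_1 \otimes \cdots \otimes v_n) = v_{\sigma^{-1}(1)} \otimes \cdots \otimes v_{\sigma^{-1}(n)}$. Hence the centralizer $\mathcal{B}'$ is exactly the $S_n$-fixed subspace $(\mathrm{End}(V)^{\otimes n})^{S_n}$, i.e.\ the space $\mathcal{S}$ of symmetric tensors. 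The key technical input is the \emph{polarization identity}: over a field of characteristic zero the pure powers $\{u^{\otimes n} : u \in U\}$ of any vector space $U$ span all of $\mathcal{S}$, via
\[
u_1 \odot \cdots \odot u_n = \frac{1}{n!}\sum_{T \subseteq \{1,\dots,n\}} (-1)^{\,n-|T|}\Big(\sum_{i \in T} u_i\Big)^{\!\otimes n},
\]
where the right-hand side collapses by inclusion--exclusion to the sum over bijections. Applied to $U = \mathrm{End}(V)$ this gives $\mathcal{A} = \mathcal{S} = \mathcal{B}'$, since the reverse inclusion $\mathcal{A} \subseteq \mathcal{S}$ is clear.

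\emph{Conclusion.} Finally, $kS_n$ is semisimple by Maschke's theorem in characteristic zero, so its quotient $\mathcal{B}$ is a semisimple subalgebra of $\mathrm{End}(V^{\otimes n})$, and the double centralizer theorem gives $\mathcal{B}'' = \mathcal{B}$. Therefore
\[
\mathcal{A}' = \mathcal{S}' = (\mathcal{B}')' = \mathcal{B}'' = \mathcal{B},
\]
so $[\M(N,k),\Vect](V^{\otimes n},V^{\otimes n}) = \mathcal{A}' = \mathcal{B}$, and the injective map $R$ surjects onto $\mathcal{B}$, hence is an isomorphism. The main obstacle is the polarization step $\mathcal{A} = \mathcal{S}$, which is precisely where characteristic zero enters substantively; the remaining ingredients (the conjugation-as-permutation identification, injectivity from $n \le N$, Maschke, and the double centralizer theorem) are routine.
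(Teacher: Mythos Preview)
Your proof is correct and is the classical Schur--Weyl argument via the double centralizer theorem. The paper takes a more elementary and direct route that avoids Maschke's theorem, polarization, and the double centralizer machinery entirely. The paper's proof shows first that any $\M(N,k)$-equivariant $\psi$ is determined by its value on the ``generic element'' $g = e_1 \otimes \cdots \otimes e_n$, and then tests equivariance against two carefully chosen diagonal matrices (one scaling the basis vectors $e_i$ with $i > n$, one scaling those outside the image of a given $f$) to force all coefficients $a_f$ in the expansion of $\psi(g)$ to vanish unless $f \in S_n$. This argument is entirely hands-on linear algebra.

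The trade-offs: your approach situates the lemma in its natural conceptual home and makes the role of characteristic zero transparent (it enters twice, via Maschke and via polarization), but it imports substantial external theory. The paper's approach is self-contained and shows that only equivariance under diagonal matrices is really needed, which is in keeping with the paper's broader theme of restricting to the diagonal submonoid $k^N \subset \M(N,k)$; on the other hand, the specific choices of test matrices are somewhat ad hoc, and the characteristic-zero hypothesis is used more obliquely (through $2^m \neq 1$).
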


\begin{proof}
    Let $e_1, \ldots, e_N$ be the standard basis of $V$. There are $N^n$ basis elements for $V^{\otimes n}$, indexed by functions $f\maps [n] \to [N]$ where $[n] \coloneqq \{1, \ldots, n\}$. Thus a typical basis element is $e_{f(1)} \otimes \cdots \otimes e_{f(n)}$, and a typical element is uniquely representable as a $k$-linear combination 
\[
\sum_{f\maps [n] \to [N]} a_f \, e_{f(1)} \otimes \cdots \otimes e_{f(n)}
\]
Let $g = e_1 \otimes \cdots \otimes e_n$ (a `generic element' of $V^{\otimes n}$). An element $\sigma \in S_n$ is uniquely determined by its action on $g$, since of course $\sigma \cdot g = e_{\sigma(1)} \otimes \cdots \otimes e_{\sigma(n)}$. Thus, the map
\[
kS_n \to [\M(N, k), \Vect](V^{\otimes n}, V^{\otimes n})
\]
is injective. It remains to prove the claim that it is surjective. 

First, notice that if $\psi \maps V^{\otimes n} \to V^{\otimes n}$ is a $\M(N, k)$-equivariant map, where $\M(N, k)$ acts diagonally on $V^{\otimes n}$ by
\[
L \cdot (v_1 \otimes \cdots \otimes v_n) = L^{\otimes n}(v_1 \otimes \cdots \otimes v_n) = L(v_1) \otimes \cdots \otimes L(v_n),
\]
then $\psi$ is uniquely determined by the value $\psi(g)$. Indeed, if $e_{f(1)} \otimes \ldots \otimes e_{f(n)}$ is any basis element, then taking $L \in \M(N, k)$ to be any linear map such that $L(e_i) = e_{f(i)}$ for $1 \leq i \leq n$, equivariance forces the equation
\[
\psi(e_{f(1)} \otimes \cdots \otimes e_{f(n)}) = L \cdot \psi(g).
\]
Given such a map $\psi$, write the element $\psi(g)$ as
\[
\psi(g) = \sum_f a_f e_{f(1)} \otimes \cdots \otimes e_{f(n)},
\]
summing over functions $f \maps [n] \to [N]$. The claimed surjectivity will follow if $a_f = 0$ unless $f$ is a permutation on $[n]$, for in that case we have
\[
\psi(g) = \psi(e_1 \otimes \cdots \otimes e_n) = \sum_{\sigma \in S_n} a_\sigma e_{\sigma 1} \otimes \cdots \otimes e_{\sigma n} = (\sum_{\sigma \in S_n} a_\sigma \sigma) \cdot (e_1 \otimes \cdots \otimes e_n)
\]
and thus
\[  \psi(v_1 \otimes \cdots \otimes v_n) = (\sum_{\sigma \in S_n} a_\sigma \sigma) \cdot (v_1 \otimes \cdots \otimes v_n) \]
for all $v_i \in V$, by applying the $M_N(k)$-equivariance of $\psi$ to any linear map $L$ with $L(e_i)=v_i$.  The last displayed equation shows that $\psi$ is indeed in the image of the map $kS_n \to [\M(N, k), \Vect](V^{\otimes n}, V^{\otimes n})$.

To see $a_f = 0$ unless $f(i)\in [n]$ for all $i \in [n]$, consider the linear map $L\maps V \to V$ defined by the rule $L(e_i) = e_i$ if $1 \leq i \leq n$, and $L(e_i)=2e_i$ if $n < i \leq N$. Then
\[
(\psi \circ L^{\otimes n})(e_1 \otimes \cdots \otimes e_n) = \psi(e_1 \otimes \cdots \otimes e_n) = \sum_f a_f e_{f(1)} \otimes \cdots \otimes e_{f(n)}
\]
whereas
\begin{align*}
    (L^{\otimes n} \circ \psi)(e_1 \otimes \cdots \otimes e_n) 
    &= L^{\otimes n}\left(\sum_{f \maps [n] \to [N]} a_f e_{f(1)} \otimes \cdots \otimes e_{f(n)}\right) 
    \\&= \sum_{f \maps [n] \to [N]} 2^{m_f} a_f e_{f(1)} \otimes \cdots \otimes e_{f(n)}
\end{align*}
where $m_f$ is the size of $\{i \leq n|\; f(i)>n\}$. Matching coefficients, we have $a_f = 2^{m_f} a_f$, so that $a_f = 0$ unless $f(i) \leq n$ for all $i \leq n$.  Thus, under equivariance, $\psi(g)$ is of the form $\sum_{f \maps [n] \to [n]} a_f e_{f(1)} \otimes \cdots \otimes e_{f(n)}$. 

To see $a_f = 0$ unless $f$ is surjective, pick any non-surjective $f$, and define a linear map $L$ by the rule $L(e_i) = e_i$ if $i \in \text{im}(f)$, and $L(e_i) = 2e_i$ if $i \notin \text{im}(f)$. Then
\[
(L^{\otimes n} \circ \psi)(g) = L^{\otimes n}\left(\sum_{h \maps [n] \to [n]} a_h e_{h(1)} \otimes \cdots \otimes e_{h(n)}\right)
\]
where the summand at index $f$ is $a_f L(e_{f(1)}) \otimes \cdots \otimes L(e_{f(n)}) = a_f e_{f(1)} \otimes \cdots \otimes e_{f(n)}$. On the other hand, letting $m$ be the size of $[n] \setminus \text{im}(f)$,
\[
(\psi \circ L^{\otimes n})(g) = \psi(L(e_1) \otimes \cdots \otimes L(e_n)) = \psi(2^m e_1 \otimes \cdots \otimes e_n) = \sum_h 2^m a_h e_{h(1)} \otimes \cdots \otimes e_{h(n)}
\]
where the summand at the index $f$ is $2^m a_f e_{f(1)} \otimes \cdots \otimes e_{f(n)}$. These summands agree by equivariance, and this forces $a_f = 0$. This shows that the sum in
\[
\psi(g) = \sum_f a_f e_{f(1)} \otimes \cdots \otimes e_{f(n)}
\]
may be taken over surjective functions $f \maps [n] \to [n]$, in other words permutations $f \in S_n$, and the proof is complete.
\end{proof}

\begin{cor} 
\label{lem:BAi_ess_inj}
The following composite functor is an extension when $N \ge n$:
\[ \begin{tikzcd}
    \overline{k\S}_{\le n} 
      \arrow[r,"i"]
    &
    \overline{k\S}
    \arrow[r,"A"]
    & 
    \Rep(\M(N,k)) 
    \arrow[r,"B"]
    &
    \Rep(k^N) \simeq \A^{\boxtimes N}.
\end{tikzcd}
\]
\end{cor}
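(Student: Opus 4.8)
The plan is to recognize that the displayed three-arrow composite is nothing but $B \circ (A \circ i)$, a composite of two functors we have \emph{already} proved to be extensions, and then to invoke the bookkeeping result \cref{lem:extension}. So almost no new work is required: all the genuine content has been discharged upstream.

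First I would group the arrows as $B \circ (A \circ i)$. By \cref{thm:Ai_ess_inj}, the composite $A \circ i \maps \overline{k\S}_{\le n} \to \Rep(\M(N,k))$ is an extension precisely under the hypothesis $n \le N$, which is exactly the standing assumption $N \ge n$ of the corollary; thus $A \circ i$ satisfies each of the conditions (Fa), (Co) and (Es). Next, by \cref{lem:B_ess_inj}, the 2-rig map $B \maps \Rep(\M(N,k)) \to \Rep(k^N)$ is a 2-rig extension, so its underlying functor likewise satisfies (Fa), (Co) and (Es).

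Finally I would apply the first bullet of \cref{lem:extension} three times, once for each of the three conditions: since both $A \circ i$ and $B$ satisfy (Fa), so does $B \circ A \circ i$; likewise for (Co); likewise for (Es). Hence the composite is faithful, conservative, and essentially injective, i.e.\ an extension, which is the assertion. The identification $\Rep(k^N) \simeq \A^{\boxtimes N}$ is just the equivalence recorded in \cref{thm:free_2-rig_on_N_bosonic_sublines} and plays no role beyond naming the target.

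There is essentially no obstacle at this stage: the only conceivable pitfall is a bookkeeping one, namely making sure the hypothesis of \cref{thm:Ai_ess_inj} ($n \le N$) matches the corollary's hypothesis ($N \ge n$) and that $B$ is genuinely an \emph{extension} and not merely, say, faithful. Both checks are immediate. The substantive difficulty lived entirely in the proof of \cref{thm:Ai_ess_inj}---in particular the fully-faithfulness computation identifying $kS_n$ with the endomorphisms of $V^{\otimes n}$ as an $\M(N,k)$-representation---and in \cref{lem:B_ess_inj}, whose essential injectivity ultimately rested on the reductive-group input of \cref{lem:E_ess_inj}.
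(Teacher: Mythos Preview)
Your proof is correct and follows essentially the same approach as the paper: group the composite as $B \circ (A \circ i)$, invoke \cref{thm:Ai_ess_inj} and \cref{lem:B_ess_inj} to see that each factor is an extension under the hypothesis $N \ge n$, and conclude via \cref{lem:extension} that the composite is an extension.
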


\begin{proof} 
Assume $N \ge n$. By \cref{thm:Ai_ess_inj} we know that $A \circ i$ is an extension, and by \cref{lem:B_ess_inj} we know the same for $B$. Thus their composite is an extension.
  \end{proof} 

As noted in the discussion after \cref{lem:E_ess_inj}, we can use ideas from Lie theory to show $B$ is essentially injective in the special case $k = \CC$. Thus, it is interesting that we can prove the essential injectivity $B \circ A \circ i$ for any field $k$ of characteristic zero using the essential injectivity of $B$ only for $k = \CC$. The trick is to use a kind of `up-and-down' argument.

It is well known that $\Q$ is a splitting field for the symmetric groups \cite[Corollary 4.16]{Lorenz}. In our language, this implies that for any field of $k$ of characteristic zero, the functor
\[
\begin{tikzcd}
    \overline{\Q\S}_{\le n} 
      \arrow[r]
    & 
    \overline{k\S}_{\le n}
\end{tikzcd}
\] 
given by tensoring representations with $k$ is both essentially injective and essentially surjective. Clearly, changing coefficients along the field map $\Q \hookrightarrow k$ induces an essentially injective map $\Vect_\Q \to \Vect_k$, and the same is true for graded vector spaces of these types. Taking $k = \CC$ it follows that the two vertical maps in 
\[
\begin{tikzcd}
    \overline{\Q\S}_{\le n} 
      \arrow[rr,"B \circ A \circ i"] \arrow[d]
    && 
    \Rep(\Q^N) 
    \arrow[d] \\
    \overline{\CC\S}_{\le n}
    \arrow[rr,"B \circ A \circ i"'] 
    &&
    \Rep(\CC^N) 
\end{tikzcd}
\]
are essentially injective, as is the bottom horizontal map. It follows from \cref{lem:extension}  that the top horizontal map is essentially injective as well. 

Now let $k$ be any field of characteristic zero. Consider the diagram 
\[
\begin{tikzcd}
    \overline{\Q\S}_{\le n} 
      \arrow[rr, "B \circ A \circ i"] \arrow[d]
    && 
    \Rep(\Q^N) 
    \arrow[d] \\
    \ksbar_{\le n}
    \arrow[rr, "B \circ A \circ i"'] 
    &&
    \Rep(k^N). 
\end{tikzcd}
\]
We have seen that the right vertical map and the top horizontal map are essentially injective. On the other hand, because $\Q$ is a splitting field for the symmetric groups, the left vertical map is essentially {\em surjective}. It follows from \cref{lem:extension} that the bottom horizontal map is essentially injective.

\section{Graded 2-rigs} 
\label{sec:graded_2-rigs}

We are now close to proving the main result of this paper. In \cref{lem:BAi_ess_inj} we constructed an extension of linear categories whenever $N \ge n$:
\[  
    \overline{k\S}_{\le n} \to
    \Rep(k^N) \simeq \A^{\boxtimes N}.
\]
We now wish to study this extension in the limit where $N, n \to \infty$. To do this, we use the inclusions of linear categories
\[   \overline{k\S}_{\le 0} \xrightarrow{\phantom{x}i_0 \phantom{x}} \cdots \xrightarrow{i_{n-1}} \overline{k\S}_{\le n} \xrightarrow{\phantom{x}i_n\phantom{x}} \cdots \]
and the 2-rig maps
\[    \A^{\boxtimes\, 0} \xleftarrow{\phantom{x}\pi_0\phantom{x}} \cdots \xleftarrow{\phantom{x} \pi_{N-1}} 
\A^{\boxtimes N} \xleftarrow{\phantom{x}\pi_N} \cdots \]
where $\pi_N$ sends the generating subline objects $s_1, \dots, s_N$ to themselves and sends $s_{N+1}$ to zero. Using the former, it is easy to see that $\ksbar$ is the colimit of the linear categories $\overline{k\S}_{\le n}.$   Using the latter, we can define a 2-rig $\Ainfty$ that is some kind of limit of the 2-rigs $\A^{\boxtimes N}$. There are some subtleties here that we must address!  But in the end, we shall obtain a map of 2-rigs
\[    F \maps \ksbar \to \Ainfty  \]
and prove in \cref{thm:splitting2} that it is an extension. 

Let us address the subtleties. Clearly we need to decide whether to define $\Ainfty$ as a 1-categorical or 2-categorical limit of the 2-rigs $\A^{\boxtimes N}$---and if the latter, what kind. It turns out that a 1-categorical limit suffices. However, we need to take \emph{gradings} into account. Most of the 2-rigs and 2-rig maps that we have been discussing are in fact graded in an appropriate sense. By taking the limit in the category of graded 2-rigs, we avoid getting a limit that is `too large'.

To get a sense for why, note that this issue shows up already at the level of Grothendieck rings. Since $\A^{\boxtimes N}$ is the 2-rig of $\N^N$-graded vector spaces of finite total dimension, we have
\[    K(\A^{\boxtimes N}) \cong \Z[x_1, \dots , x_N] . \]
The ring homomorphism
\[  K(\pi_N) \maps \Z[x_1, \dots, x_{N+1}] \to \Z[x_1, \dots, x_N] \]
sends the generators $x_1, \dots , x_N$ to themselves and sends $x_{N+1}$ to zero. Homomorphisms of this sort give a diagram
\[  \Z \leftarrow \cdots \leftarrow  \Z[x_1, \ldots, x_N] \leftarrow \Z[x_1, \ldots x_N, x_{N+1}] \leftarrow \cdots \]
Practically by definition, the limit of this diagram in the category of rings, say $R$, consists of sequences of polynomials $P_N \in \Z[x_1, \dots, x_N]$ such that setting $x_{N+1}$ equal to zero in $P_{N+1}$ gives $P_N$. We can describe any such sequence as a possibly infinite formal sum of monomials in the variables $x_i$, for example
\[  P(x_1, x_2, \dots ) =   x_1 + 2x_1 x_2 + 3x_1 x_2 x_3 + \cdots .\]
We recover the polynomial $P_N$ from such a formal sum by setting all variables $x_i$ with $i > N$ equal to zero. In this description, the ring $R$ consists of all formal sums of monomials in the $x_i$ containing only finitely many monomials in any chosen finite set of variables. 

However, the rings $\Z[x_1, \dots, x_N]$ are all graded by degree, and the maps in the above diagram are homomorphisms of graded rings, so we can also take its limit in the category of graded rings. The result is some graded ring $S$. One can show that $S$ is a proper subring of $R$: it contains only those sequences $P_N$ that have \emph{bounded degree}. Thus, we may think of $S$ as the ring of formal sums of monomials of bounded degree in the variables $x_i$.
For example, $S$ does not contain the formal sum $P$ shown above, but it does contain all the symmetric functions, such as the $n$th power sum
\[  p_n(x_1, x_2, \dots) =  \sum_{i = 1}^\infty x^n_i  , \]
the $n$th elementary symmetric function
\[  e_n(x_1, x_2, \dots ) = \sum_{i_1 < \cdots < i_n} x_{i_1} \cdots x_{i_n}, \]
and the $n$th complete symmetric function
\[   h_n(x_1, x_2, \dots ) = \sum_{i_1 \le \cdots \le i_n} x_{i_1} \cdots x_{i_n} \]
for all $n \in \N$. In fact, the elements of $S$ fixed by all permutations of the variables $x_i$ form precisely the ring of symmetric functions \cite[Sec.\ I.2]{Macdonald}.

All this suggests that we should take the limit of the diagram
\[    \A^{\boxtimes\, 0} \xleftarrow{\phantom{x} \pi_0\phantom{x}} \cdots \xleftarrow{\phantom{x} \pi_{N-1}} 
\A^{\boxtimes N} \xleftarrow{\phantom{x}\pi_N} \cdots \]
not in the category of 2-rigs, but in the category of \emph{graded} 2-rigs. 

We present a careful treatment of graded 2-rigs in \cref{app:graded_2-rigs}. Here we merely describe the 2-category of $\N$-graded 2-rigs, and give the key examples we need.  We begin with some facts about Cauchy complete $k$-linear categories.

\begin{defn}
Define $\Cauch\Lin$ to be the 2-category with
\begin{itemize}
        \item Cauchy complete $k$-linear categories as objects, 
        \item $k$-linear functors as morphisms, 
        \item $k$-linear natural transformations as 2-morphisms.
\end{itemize}
\end{defn}

\begin{lem}
\label{lem:CauchLin_has_coproducts}
The 2-category $\Cauch\Lin$ has coproducts in the 2-categorical sense.
\end{lem}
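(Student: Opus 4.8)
The plan is to exhibit the coproduct of a small family $(\C_i)_{i \in I}$ of Cauchy complete $k$-linear categories as the Cauchy completion of their disjoint union, and to deduce its universal property from two standard facts: that the disjoint union is the coproduct in the 2-category $\Lin$ of all (small) $k$-linear categories, $k$-linear functors and $k$-linear natural transformations, and that Cauchy completion $\overline{(-)}$ is a left 2-adjoint to the forgetful 2-functor $\Cauch\Lin \hookrightarrow \Lin$.

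First I would describe the disjoint union $\bigsqcup_{i} \C_i$: its objects are pairs $(i,c)$ with $c \in \C_i$, and its hom-spaces are $\bigl(\bigsqcup_i \C_i\bigr)((i,c),(j,d)) = \C_i(c,d)$ when $i=j$ and $0$ otherwise, with composition and units inherited componentwise. A $k$-linear functor out of $\bigsqcup_i \C_i$ is then exactly a family of $k$-linear functors out of the $\C_i$, and---because the cross-component hom-spaces vanish---a $k$-linear natural transformation between two such functors is exactly a family of $k$-linear natural transformations. This gives a 2-natural isomorphism of hom-categories $\Lin(\bigsqcup_i \C_i, \D) \simeq \prod_i \Lin(\C_i, \D)$, so $\bigsqcup_i \C_i$ is the 2-coproduct in $\Lin$.

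Next I would invoke that $\overline{(-)} \maps \Lin \to \Cauch\Lin$ is left 2-adjoint to the inclusion; this is the $k$-linear, non-monoidal analogue of the completion 2-adjunction used earlier for $\overline{(-)} \maps \SMLin \to \TRig$, and a special case of free cocompletion under the absolute colimits (finite biproducts and splittings of idempotents), cf.\ \cite{Kelly}. A left 2-adjoint preserves 2-colimits, and in particular 2-coproducts: for any Cauchy complete $\D$ we have
\[
\Cauch\Lin\!\left(\overline{\textstyle\bigsqcup_i \C_i},\, \D\right)
\;\simeq\; \Lin\!\left(\textstyle\bigsqcup_i \C_i,\, \D\right)
\;\simeq\; \prod_i \Lin(\C_i, \D)
\;\simeq\; \prod_i \Cauch\Lin(\C_i, \D),
\]
where the last step uses that each $\C_i$ and $\D$ are already Cauchy complete, so that $k$-linear functors and transformations between them are precisely the morphisms and 2-morphisms of $\Cauch\Lin$. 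Hence $\overline{\bigsqcup_i \C_i}$, with coprojections the evident inclusions $\C_i \hookrightarrow \bigsqcup_i \C_i \hookrightarrow \overline{\bigsqcup_i \C_i}$, is the coproduct of $(\C_i)_{i \in I}$ in $\Cauch\Lin$.

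The only point requiring genuine care is the existence of the completion 2-adjunction at the full 2-categorical level---that is, that $\overline{(-)}$ induces \emph{equivalences of hom-categories}, not merely bijections of hom-sets. I expect this to be the main (though essentially routine) obstacle, handled exactly as in the symmetric monoidal case already cited; everything else is formal.
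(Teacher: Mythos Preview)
Your argument is correct, but it takes a genuinely different route from the paper. You first form the coproduct $\bigsqcup_i \C_i$ in the 2-category $\Lin$ of all $k$-linear categories and then push it forward along the left 2-adjoint $\overline{(-)}$; the paper instead writes down an explicit model directly inside $\Cauch\Lin$, namely the category whose objects are tuples $(c_\alpha)_{\alpha \in A}$ with all but finitely many $c_\alpha$ zero, with hom-spaces $\bigoplus_\alpha \C_\alpha(c_\alpha,d_\alpha)$, and then verifies the universal property by hand by exhibiting an explicit pseudo-inverse to the canonical comparison functor. Your approach is shorter and more conceptual; the paper's buys a concrete description of the objects, morphisms, and isomorphisms in the coproduct. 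That concrete description is not incidental: it is used verbatim in later proofs (for instance, to show that $K$ preserves coproducts, and in unpacking $M$-graded Cauchy complete linear categories). If you go your route, you should still observe that $\overline{\bigsqcup_i \C_i}$ is equivalent to the paper's explicit model---this follows because each $\C_i$ is already Cauchy complete, so formal finite sums of objects from the disjoint union reduce to finitely-supported tuples---so that the downstream arguments remain available.
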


\begin{proof}
 We need to show that given an indexed family $(\C_\alpha)_{\alpha \in A}$ with $\C_\alpha \in \Cauch\Lin$, there is a Cauchy complete $k$-linear category $\bigoplus_\alpha \C_\alpha$ equipped with $k$-linear functors 
 \[   i_\beta \maps \C_\beta \to \bigoplus_\alpha \C_\alpha \]
 such that for all $\D \in \Cauch\Lin$ there is an equivalence of categories
\[ p \maps \Cauch\Lin(\bigoplus_\alpha \C_\alpha, \D) \to \prod_\alpha \Cauch\Lin(\C_\alpha, \D) \]
whose $\alpha$th component, say $p_\alpha$, is obtained by precomposing with $i_\alpha$.

The objects of $\bigoplus_\alpha \C_\alpha$ are tuples \((c_\alpha)_{\alpha \in A}\) where \(c_\alpha \in \C_\alpha\) and all but finitely many of the \(c_\alpha\) are zero
objects.  A hom-set for $\bigoplus_\alpha \C_\alpha$ is the vector space defined by
\[    (\bigoplus_\alpha \C_\alpha) (c_\alpha)_{\alpha \in A}), (d_\alpha)_{\alpha \in A}) = \bigoplus_\alpha \C_\alpha(c_\alpha, d_\alpha) .\]
Composition and units are defined using those in the categories $\C_\alpha$.
It is routine to check that $(c_\alpha \oplus d_\alpha)_{\alpha \in A}$ is the biproduct of $(c_\alpha)_{\alpha \in A}$ and $(d_\alpha)_{\alpha \in A}$, and that idempotents split in the underlying ordinary category of $\bigoplus_\alpha \C_\alpha$   since idempotents split in each category $\C_\alpha$.  Thus $\bigoplus_\alpha \C_\alpha$ is Cauchy complete. 

There are $k$-linear functors $i_\beta \maps C_\beta \to \bigoplus_\alpha \C_\alpha$ defined by $i_\beta(c) = (c_\alpha)_{\alpha \in A}$ where 
\[  c_\alpha = \left\{  \begin{array}{ccc} c & \text{if } \alpha = \beta \\
0 & \text{if } \alpha \ne \beta. 
\end{array} \right. \]
To show that the resulting $k$-linear functor
\[ p \maps \Cauch\Lin(\bigoplus_\alpha \C_\alpha, \D) \to \prod_\alpha \Cauch\Lin(\C_\alpha, \D) \]
defined as above is an equivalence, we can check that it has an pseudo-inverse (meaning an inverse up to isomorphism)
\[  q \maps \prod_\alpha \Cauch\Lin(\C_\alpha, \D) \to \Cauch\Lin(\bigoplus_\alpha \C_\alpha, \D)  \]
which sends any tuple of $k$-linear functors $F = (F_\alpha \maps \C_\alpha \to \D)_{\alpha \in A}$ to the $k$-linear functor $F^\vee \maps \bigoplus_\alpha \C_\alpha \to D$ defined by
\[  F^\vee\left((c_\alpha)_{\alpha \in A}\right) = 
\bigoplus_\alpha F_\alpha(c_\alpha) . \qedhere \]
\end{proof}

We use these coproducts to define graded 2-rigs:

\begin{defn} 
An \define{graded 2-rig} is a 2-rig $\R$ equipped with Cauchy complete linear subcategories $\R_n$, called \define{grades}, for which the inclusions
$i_n \maps \R_n \to \R$ induce an equivalence of Cauchy complete linear categories
\[    \bigoplus_{n \in \N} \R_n \xlongrightarrow{\sim} \R, \]
and such that the tensor product and unit of $\R$ respect this decomposition as follows:
\[    \otimes \maps \R_m \boxtimes \R_n \to \R_{m + n} , \qquad I \in \R_0. \]
\end{defn}

We are mainly interested in two examples:

\begin{example}
\label{ex:ksbar_grading}
We give the 2-rig $\ksbar$ the grading whose $n$th grade is $\overline{kS_n}$, the linear category of all finite-dimensional representations of $S_n$, which is naturally a $k$-linear subcategory of $\ksbar$. For details, see \cref{ex:ksbar_grading_app}. 
\end{example}

\begin{example}
\label{ex:An_grading}
We give the 2-rig $\A^{\boxtimes N}$ the grading whose $n$th grade, $\A^{\boxtimes N}_n$, is the full subcategory on all $n$-fold tensor products of the generating bosonic sublines $s_i$. For details, see \cref{ex:AN_grading_app}. In \cref{thm:free_2-rig_on_N_bosonic_sublines} we saw how to identify $\A^{\boxtimes N}$ with the 2-rig of $\N^N$-graded vector spaces of finite total dimension: the object 
\[         s_1^{\otimes m_1} \otimes \cdots \otimes s_N^{\otimes m_N} \in \A^{\boxtimes N} \]
corresponds to the $\N^N$-graded vector space
with the field $k$ in grade $(m_1, \dots, m_N)$ and $0$ in every other grade.  If we make this identification, we obtain not just an equivalence but an isomorphism of $k$-linear categories
\[     \A^{\boxtimes N}_n \cong [\N^N(n), \Fin\Vect] \]
where $\N^N(n)$ is the set of $N$-tuples of natural numbers whose sum is $n$.  This nuance becomes important in \cref{sec:Ainfty}.
\end{example}

We can similarly define morphisms and 2-morphisms between graded 2-rigs, obtaining a 2-category of graded rigs.

\begin{defn} 
A \define{map of graded 2-rigs} $F \maps \R \to \R'$ is a 2-rig map that sends each grade $\R_n$ into the corresponding grade $\R'_n$. A \define{2-morphism of graded 2-rigs} is a linear natural transformation $\alpha \maps F \To F'$ between maps of $\N$-graded 2-rigs $F, F' \maps \R \to \R'$.
\end{defn}

Again, we are mainly interested in two examples:

\begin{example}
The 2-rig map 
\[        \phi_{N-1} \maps \A^{\boxtimes N} \to \A^{\boxtimes N - 1} \]
sending the generating sublines $s_1, \dots , s_{N-1}$ to themselves and sending $s_N$ to zero is a map of graded 2-rigs. 
\end{example}

\begin{example}
The 2-rig map 
\[         F_N \maps \ksbar \to \A^{\boxtimes N} \]
sending the generating object $x \in \ksbar$ to the object $s_1 \oplus \cdots \oplus s_N$ is a map of graded 2-rigs.  This map is isomorphic to the composite
\[ \begin{tikzcd}
    \overline{k\S}
    \arrow[r,"A"]
    & 
    \Rep(\M(N,k)) 
    \arrow[r,"B"]
    &
    \Rep(k^N) \simeq \A^{\boxtimes N}.
\end{tikzcd}
\]
In \cref{lem:BAi_ess_inj} we saw that precomposing this map with the inclusion $i \maps \overline{k\mathsf{S}}_{\le n} \to \ksbar$ gives an extension when $N \ge n$.
\end{example}

\section{The 2-rig $\Ainfty$}
\label{sec:Ainfty}

Now we turn to the 2-rig that plays the starring role in our splitting principle. We take the liberty of calling it $\Ainfty$, for just as $S^\infty$ is sometimes used in topology to denote the colimit of the topological spaces $S^N$, this 2-rig is the limit of the graded 2-rigs $\A^{\boxtimes N}$.

To obtain an explicit description of $\Ainfty$, we shall define it as a \emph{strict} limit rather than a fully 2-categorical limit.  For this to make sense, we must choose models of the graded 2-rigs $\A^{\boxtimes N}$ where they are specified up to isomorphism rather than merely up to equivalence.  Following \cref{ex:An_grading}, we henceforth define $\A^{\boxtimes N}$ to be the graded 2-rig of $\N^N$-graded vector spaces of finite total dimension. 

\begin{defn}
$\Ainfty$ is the strict limit of the diagram of graded 2-rigs
\[    \A^{\boxtimes\, 0} \xleftarrow{\phantom{x} \phi_0\phantom{x}} \cdots \xleftarrow{\phantom{x} \phi_{N-2}}  A^{\boxtimes N-1} \xleftarrow{\phantom{x} \phi_{N-1}} 
\A^{\boxtimes N} \xleftarrow{\phantom{x}\phi_N} \cdots \]
where $\phi_{N-1}$ sends the generating sublines $s_1, \dots , s_{N-1} \in \A^{\boxtimes N}$ to the like-named objects in $\A^{\boxtimes N - 1}$ and sends $s_N$ to zero.
\end{defn}
 
To describe $\Ainfty$ explicitly, we introduce the following sets: 

\begin{itemize}
    \item $\N^{(\infty)}$ is the set of sequences $(m_1, m_2, \dots)$ of natural numbers (including $0$) whose sum $m_1 + m_2 + \cdots$ is finite. 
    \item $\N^{(\infty)}(m)$ is the subset of $\N^{(\infty)}$ with sequences $(m_1, m_2, \ldots)$ whose sum is $m$. 
    \item $\N^N(m)$ is the set of $N$-tuples of natural numbers whose sum is $m$.
\end{itemize}
There are inclusions
\[  
\begin{array}{rrccl}
    i_{N,m} &\maps& \N^N(m) &\to& \N^{N+1}(m) \\
    && (m_1, \ldots, m_N) &\mapsto & (m_1, \ldots, m_N, 0) 
\end{array}
\]
and $\N^{(\infty)}(m)$ is the colimit in $\Set$ of this sequence of inclusions:
\[\N^{(\infty)}(m) \cong \lim_{\longrightarrow} \N^N(m) .\]
Precomposing with $i_{N,m}$ gives a $k$-linear functor 
\[
\phi_{N, m} \maps [\N^{N+1}(m), \Fin\Vect] \to [\N^N(m), \Fin\Vect]
\] 
with
\[   \phi_{N, m}(G)(m_1, \ldots, m_N) = G(m_1, \ldots, m_N, 0) \] 
Recalling from \cref{ex:An_grading} the isomorphism
\[    \A^{\boxtimes N}_m \cong [\N^N(m), \Fin\Vect ], \]
we see that $\phi_{N,m}$ in fact is the restriction of the graded 2-rig map $\phi_N \maps \A^{\boxtimes (N+1)} \to \A^{\boxtimes N}$ to the $m$th grade. 

\begin{lem}
\label{lem:Ainfty_mth_grade}
As a Cauchy complete linear category, the $m$th grade of $\Ainfty$ is isomorphic to $[\N^{(\infty)}(m), \Fin\Vect]$.
\end{lem}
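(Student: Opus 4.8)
The plan is to reduce the statement to a single fixed grade and then recognize the resulting limit of functor categories as a functor category out of a colimit of index sets.

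First I would use that $\Ainfty$ is a \emph{strict} limit in the $2$-category of graded $2$-rigs, together with the fact that every map of graded $2$-rigs preserves grades, to conclude that the grading on $\Ainfty$ is computed gradewise. Concretely, an object of $\Ainfty$ is a compatible family $(R^{(N)})_N$ with $R^{(N)} \in \A^{\boxtimes N}$ and $\phi_N(R^{(N+1)}) = R^{(N)}$; decomposing $R^{(N)} = \bigoplus_m R^{(N)}_m$ and using that $\phi_N$ carries the $m$th grade to the $m$th grade, the compatibility conditions hold separately in each grade (this gradewise behavior of strict limits is treated in \cref{app:graded_2-rigs}). Hence the $m$th grade of $\Ainfty$ is the strict limit in $\Cauch\Lin$ of
\[ \A^{\boxtimes 0}_m \xleftarrow{\phi_{0,m}} \cdots \xleftarrow{\phi_{N-1,m}} \A^{\boxtimes N}_m \xleftarrow{\phi_{N,m}} \cdots \]
Invoking the \emph{isomorphisms} $\A^{\boxtimes N}_m \cong [\N^N(m), \Fin\Vect]$ of \cref{ex:An_grading}—it is essential that these are isomorphisms, not merely equivalences, so that taking a strict limit is legitimate—and the identification of $\phi_{N,m}$ with precomposition along $i_{N,m} \maps \N^N(m) \to \N^{N+1}(m)$, the lemma reduces to an isomorphism of $k$-linear categories
\[ \lim_N \, [\N^N(m), \Fin\Vect] \;\cong\; [\N^{(\infty)}(m), \Fin\Vect]. \]

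Next I would construct this isomorphism directly from the fact, recorded in the excerpt, that $\N^{(\infty)}(m) \cong \lim_{\longrightarrow} \N^N(m)$ in $\Set$ with structure maps the inclusions $i_{N,m}$. Restriction along each canonical inclusion $\N^N(m) \hookrightarrow \N^{(\infty)}(m)$ (sending $(m_1,\dots,m_N)$ to $(m_1,\dots,m_N,0,0,\dots)$) gives $k$-linear functors $[\N^{(\infty)}(m), \Fin\Vect] \to [\N^N(m), \Fin\Vect]$ that commute strictly with the $\phi_{N,m}$, so they assemble into a comparison functor $\Psi \maps [\N^{(\infty)}(m), \Fin\Vect] \to \lim_N [\N^N(m), \Fin\Vect]$. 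I would then write down its inverse $\Theta$: since $\N^{(\infty)}(m)$ is the union of the $\N^N(m)$, every $a \in \N^{(\infty)}(m)$ lies in $\N^N(m)$ once $N$ exceeds the largest index in its support, so for a compatible family $(G_N)_N$ I set $\Theta((G_N))(a) = G_N(a)$ for any such $N$; the strict equalities $\phi_{N,m}(G_{N+1}) = G_N$ make this independent of $N$, and the same prescription defines $\Theta$ on morphisms.

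Finally I would verify that $\Psi$ and $\Theta$ are mutually inverse on objects and on morphisms—immediate since both are determined valuewise on $\N^{(\infty)}(m)$ and its finite stages—and that both are $k$-linear, yielding the claimed isomorphism; Cauchy completeness of $[\N^{(\infty)}(m), \Fin\Vect]$ is automatic, as it is a product of copies of the Cauchy complete category $\Fin\Vect$ indexed by the set $\N^{(\infty)}(m)$. The only point requiring genuine care—and the sole place the argument could fail—is the well-definedness of $\Theta$ and its strict inverse relationship to $\Psi$: this depends on working with the strict models of \cref{ex:An_grading} and on the colimit $\N^{(\infty)}(m) = \lim_{\longrightarrow} \N^N(m)$ being filtered, so that each index stabilizes at a finite stage. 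I would also flag, to prevent confusion, that no finiteness constraint appears \emph{within} this fixed grade $m$: the vector spaces $G(a)$ may be nonzero for infinitely many $a$—mirroring the way the power sum $p_m$ is an infinite sum of degree-$m$ monomials—the across-grade boundedness being the separate effect of taking the limit in graded rather than ungraded $2$-rigs.
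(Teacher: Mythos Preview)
Your proposal is correct and follows essentially the same route as the paper's proof: reduce to the $m$th grade, identify each $\A^{\boxtimes N}_m$ with $[\N^N(m),\Fin\Vect]$, and then use that a limit of functor categories over a chain of inclusions is the functor category out of the colimit of index sets. The paper presents this as a terse chain of four isomorphisms, whereas you spell out the comparison functor $\Psi$ and its inverse $\Theta$ explicitly and flag the role of strictness and filteredness; this is a welcome expansion but not a different argument.
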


\begin{proof}
Taking the limit as $N \to \infty$ we have
\[
\begin{array}{cccl}
   \qquad \qquad \qquad \qquad & \Ainfty_m
     &\cong& 
    \displaystyle{ \lim_{\longleftarrow} A^{\boxtimes N}_m }
    \\
    &&\cong& 
    \displaystyle{ \lim_{\longleftarrow} \; [\N^N(m), \Fin\Vect] }
    \\
    &&\cong&  
    \displaystyle{ [\lim_{\longrightarrow} \N^N (m) , \Fin\Vect] }
    \\
    &&\cong&  
    \displaystyle{ [\N^{(\infty)}(m), \Fin\Vect] } . \qquad \qquad \qquad \qquad \qedhere
\end{array}
\]
\end{proof}

To flesh out the graded 2-rig structure in this description of $\Ainfty$, we need to describe its monoidal structure in terms of $k$-linear functors 
\[
\otimes \maps \Ainfty_p \boxtimes \Ainfty_q \to \Ainfty_{p+q}.
\]
Let $F \in \Ainfty_p$ and $G \in \Ainfty_q$, and put $m = p+q$. Given $(m_1, m_2, \ldots) \in \N^{(\infty)}(m)$, there are only finitely many $(p_1, p_2, \ldots) \in \N^{(\infty)}(p)$ and $(q_1, q_2, \ldots) \in \N^{(\infty)}(q)$ such that $m_i = p_i + q_i$ for all $i$. Accordingly, define 
\[(F \otimes G)(m_1, m_2, \ldots) = \bigoplus_{(m_i) = (p_i) + (q_i)} F((p_i)) \otimes G((q_i))\]
where the $(p_i)$ and $(q_i)$ appearing in the coproduct indexing belong to $\N^{(\infty)}(p)$ and $\N^{(\infty)}(q)$, respectively. This gives the graded 2-rig structure on $\Ainfty$. 

In $\Ainfty$, there is for each $n \in \N$ an object of grade 1 that we shall call $s_n$, arising from the like-named objects of grade 1 in all the 2-rigs $\A^{\boxtimes N}$ for $N \ge n$.  Alternatively, we can use \cref{lem:Ainfty_mth_grade} to describe $s_n$ as the functor $L_n \maps \N^{\infty}(1) \to \Fin\Vect$ that sends the object 
\[   (0, \dots, 0, \underbrace{1}_{n\textrm{th place}}, 0, \dots) \in \N^{\infty}(1) \]
to $k$ and sends all other objects to zero.  One can check by explicit calculation that each object $s_n \in \Ainfty$ is a bosonic subline.

\begin{lem}
The 2-rig $\Ainfty$ contains an object of grade 1 that is the coproduct of all the bosonic sublines $s_1, s_2, \ldots \in \Ainfty$.
\end{lem}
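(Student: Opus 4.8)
The plan is to work entirely within the first grade $\Ainfty_1$, which by \cref{lem:Ainfty_mth_grade} is isomorphic to $[\N^{(\infty)}(1), \Fin\Vect]$. First I would observe that $\N^{(\infty)}(1)$ is a \emph{discrete} set: its elements are exactly the sequences $(0,\dots,0,1,0,\dots)$ with a single $1$ in the $n$th place, so it may be identified with $\{1,2,3,\dots\}$. Under the isomorphism $\Ainfty_1 \cong [\N^{(\infty)}(1), \Fin\Vect]$, the object $s_n$ corresponds to the functor $L_n$ that sends the $n$th element to $k$ and all others to $0$; equivalently, in the product description $[\N^{(\infty)}(1), \Fin\Vect] \cong \prod_{n \ge 1} \Fin\Vect$, the object $s_n$ is the tuple with $k$ in the $n$th slot and $0$ elsewhere.

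Next I would identify the candidate coproduct. Since coproducts in a functor category over a discrete set are computed pointwise, the sought-after object should be the functor $L$ sending every element of $\N^{(\infty)}(1)$ to $k$---equivalently the tuple $(k,k,k,\dots)$. The crucial point is that this is a legitimate object of $[\N^{(\infty)}(1), \Fin\Vect]$, because its value at each point of $\N^{(\infty)}(1)$ is the finite-dimensional space $k$. In other words, although the $s_n$ have infinitely many nonzero summands in total, their supports are pairwise disjoint, so the pointwise sum remains finite-dimensional at each grade-$1$ index and hence lands back inside $\Fin\Vect$ at every point.

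Finally I would verify the universal property of the coproduct. For any $G \in \Ainfty_1$, a morphism $L \to G$ is a natural transformation, and since the indexing category is discrete this is just a family of linear maps $k \to G(n)$, one for each $n$, giving $\Ainfty_1(L, G) \cong \prod_{n} G(n)$. On the other hand $\Ainfty_1(s_n, G) \cong G(n)$, so $\prod_n \Ainfty_1(s_n, G) \cong \prod_n G(n)$, and the two agree compatibly with the coprojections $s_n \to L$ (each of which is the inclusion of $k$ into the $n$th slot). Thus $L$ is the coproduct $\bigoplus_{n \ge 1} s_n$, and it lies in grade $1$.

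The one genuine obstacle worth flagging is that a 2-rig is only guaranteed to have \emph{finite} coproducts, so the existence of this \emph{infinite} coproduct is not automatic; it must be earned from the explicit model. The argument above earns it precisely from the disjointness of the supports of the $s_n$, which keeps the pointwise colimit inside $\Fin\Vect$. One should also note that coproducts of grade-$1$ objects are computed within $\Ainfty_1$ itself, since by \cref{lem:CauchLin_has_coproducts} the underlying linear category $\bigoplus_m \Ainfty_m$ has its coproducts computed gradewise.
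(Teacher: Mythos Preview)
Your argument is correct and follows essentially the same route as the paper's own proof: both identify $\Ainfty_1$ with $[\N^{(\infty)}(1), \Fin\Vect]$, observe that the constant functor $L$ with value $k$ is the coproduct of the $L_n$ there, and then use the coproduct decomposition $\Ainfty \simeq \bigoplus_m \Ainfty_m$ to conclude that this grade-1 coproduct is a coproduct in $\Ainfty$. Your more explicit verification of the universal property and your remark about why the infinite coproduct exists despite 2-rigs only having finite ones are welcome elaborations, but not a different method.
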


\begin{proof} 
By \cref{lem:Ainfty_mth_grade}, we have an isomorphism of Cauchy complete linear categories
\[       \Ainfty_1 \cong [\N^\infty(1) , \Fin\Vect]  .\]
In the latter, the coproduct of the functors $L_n$ exists: it is the constant functor $L \maps \N^{(\infty)}(1) \to \Fin\Vect$ with value $k$. Thus, the coproduct of all the objects $s_n$ exists in $\Ainfty_1$.   Since $\Ainfty$ is the coproduct in $\Cauch\Lin$ of the grades $\Ainfty_m$ for $m \in \N$, this coproduct of the objects $s_n$ is also their coproduct in $\Ainfty$. 
\end{proof}

We denote the coproduct of all the bosonic sublines $s_n \in \Ainfty$ by $s_1 \oplus s_2 \oplus \cdots$.  By the universal property of $\ksbar$, up to isomorphism there exists a unique graded 2-rig map 
\[
F \maps \ksbar \to \Ainfty
\]
that takes the generator $x$ to this object $s_1 \oplus s_2 \oplus \cdots$.  This 2-rig map fits into a triangle 
\[
\begin{tikzcd}
    & \Ainfty \ar[d, "\pi_N"] \\ \ksbar \ar[r, "F_N"'] \ar[ur, "F"] & \A^{\boxtimes N} 
\end{tikzcd}
\]
which commutes when applied to the generator $x$, and therefore commutes up to a unique natural isomorphism. We next turn to the key properties of $F$.

\section{The splitting principle}
\label{sec:splitting}

Our `splitting principle' says that we can extend the free 2-rig on one object to a 2-rig in which this object becomes an infinite coproduct of bosonic sublines. More precisely:

\begin{thm}
\label{thm:splitting2}
    The 2-rig map $F \maps \ksbar \to \Ainfty$ sending the generator $x \in \ksbar$ to the coproduct $s_1 \oplus s_2 \oplus \cdots \in \Ainfty$ is an extension.
\end{thm}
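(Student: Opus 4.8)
The plan is to reduce the statement to the finite-level result \cref{lem:BAi_ess_inj}, exploiting that $\ksbar$ is assembled from the finite pieces $\ksbar_{\le n}$ together with the triangle relating $F$, $\pi_N$, and $F_N$. First I would record two structural facts. The inclusion $i \maps \ksbar_{\le n} \to \ksbar$ is fully faithful: since $\ksbar \simeq \bigoplus_m \overline{kS_m}$ as Cauchy complete linear categories, with hom-spaces splitting as finite direct sums over the grades by the construction in \cref{lem:CauchLin_has_coproducts}, the functor $i$ identifies $\ksbar_{\le n}$ with the full subcategory of objects supported in grades $\le n$. Consequently every object of $\ksbar$, every morphism, and every parallel pair of morphisms lies in the image of $i$ for some $n$. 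The second fact is that each of the conditions (Fa), (Co), (Es) is invariant under natural isomorphism of functors, so the triangle---which commutes only up to natural isomorphism---lets me replace $\pi_N \circ F$ by $F_N$, and $F_N$ by $B \circ A$, freely.

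The heart of the argument is then a single application of \cref{lem:extension}. Fix $n$ and choose any $N \ge n$. Composing the triangle $\pi_N \circ F \cong F_N \cong B \circ A$ with $i$ gives
\[ \pi_N \circ (F \circ i) \;\cong\; B \circ A \circ i \maps \ksbar_{\le n} \to \A^{\boxtimes N}, \]
and by \cref{lem:BAi_ess_inj} the right-hand side is an extension whenever $N \ge n$. Since $\pi_N \circ (F \circ i)$ therefore satisfies (Fa), (Co), and (Es), the second bullet of \cref{lem:extension}---that a condition holding for a composite is inherited by its right factor---shows that $F \circ i$ is itself an extension, for every $n$.

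It remains to upgrade this to the claim that $F$ is an extension, and here the finite-support observation does all the work. For faithfulness, given parallel maps $f, g$ in $\ksbar$ with $F(f) = F(g)$, both arrows lie in $\ksbar_{\le n}$ for some $n$ via the fully faithful $i$; writing $f = i(\tilde f)$ and $g = i(\tilde g)$ and using faithfulness of $F \circ i$ forces $\tilde f = \tilde g$, hence $f = g$. Conservativity is identical: if $F(f)$ is invertible then $\tilde f$ is invertible by conservativity of $F \circ i$, so $f$ is invertible. For essential injectivity, suppose $F(c) \cong F(c')$; choosing $n$ so that $c = i(\tilde c)$ and $c' = i(\tilde c')$, essential injectivity of $F \circ i$ yields $\tilde c \cong \tilde c'$ and hence $c \cong c'$.

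The main obstacle is not any single estimate but making the passage to the limit rigorous: one must be certain that the relevant data---a parallel pair, a single morphism, or a pair of objects---can genuinely be confined to a common finite grade $n$, and that for that $n$ some $N \ge n$ is available so that \cref{lem:BAi_ess_inj} applies. This is exactly what the grading guarantees, since $F$ is a \emph{graded} 2-rig map and the grades $\overline{kS_m}$ of $\ksbar$ are ordinary finite-dimensional representation categories, so every object is a finite direct sum of homogeneous pieces and every hom-space splits accordingly. I would therefore foreground the full faithfulness of $i$ and the finiteness of support as the load-bearing facts, with the triangle and \cref{lem:extension} doing the rest mechanically.
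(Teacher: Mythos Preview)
Your proposal is correct and follows essentially the same route as the paper: show $F\circ i$ is an extension for each $n$ by postcomposing with $\pi_N$ for some $N\ge n$ and invoking \cref{lem:BAi_ess_inj} via \cref{lem:extension}, then upgrade to $F$ using that every finite configuration in $\ksbar$ lies in some $\ksbar_{\le n}$. The only cosmetic difference is that the paper cites \cref{lem:B_ess_inj} and \cref{thm:Ai_ess_inj} separately rather than their packaged combination \cref{lem:BAi_ess_inj}.
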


\begin{proof} 
First, we show that for each $n \in \N$, the composite
\[ 
\begin{tikzcd}
    \ksbar_{\le n} 
    \arrow[r,"i"]
    &
    \ksbar
    \arrow[r,"F"]
    &
    \Ainfty
\end{tikzcd}
\]
is an extension. By \cref{lem:extension} it  suffices to show that following this composite with any further map gives an extension, so let us use $\pi_N \maps \Ainfty \to \A^{\boxtimes N}$ for $N \ge n$. But this further composite
\[ 
\begin{tikzcd}
    \ksbar_{\le n} 
      \arrow[r,"i"]
    &
    \ksbar
    \arrow[r,"F"]
    &
 \Ainfty
  \arrow[r,"\pi_N"]
  &
 \A^{\boxtimes N}
\end{tikzcd}
\]
is naturally isomorphic to
\[ 
\begin{tikzcd}
    \ksbar_{\le n} 
      \arrow[r,"i"]
    &
    \ksbar
    \arrow[r,"F_N"]
    &
 \A^{\boxtimes N}
\end{tikzcd}
\]
which in turn is naturally isomorphic to the composite
\[
\begin{tikzcd}
    \ksbar_{\le n}
    \arrow[r,"i"]
    & 
    \ksbar
    \arrow[r,"A"]
    & 
    \Rep(\M(N,k)) 
    \ar[r,"B"]
    &
    \Rep(k^N) \simeq \A^{\boxtimes N}
    \end{tikzcd}
\]
because both $F_N$ and $B \circ A$ send the generating object $x \in \ksbar$ to $s_1 \oplus \cdots \oplus s_N$. Finally, this last composite is an extension because we saw in \cref{lem:B_ess_inj} that $B$ is an extension and we saw in \cref{thm:Ai_ess_inj} that $A \circ i$ is an extension when $N \ge n$.

Next we show that $F$ itself is an extension.
We have just seen that $F$ is faithful, conservative and essentially injective when restricted to each subcategory $\overline{k \mathsf{S}}_{\le n}$. We need to show that $F$ itself has these three properties. For this, the key is that $\ksbar$ is the colimit of the subcategories $\overline{k \mathsf{S}}_{\le n}$, so any finite set of objects and morphisms of $\ksbar$ lies in some $\overline{k \mathsf{S}}_{\le n}$. The three properties of $F$ are then straightforward:

(Fa) To show $F$ is faithful, assume $f, g \maps y \to z$ in $\ksbar$ have $F(f) = F(g)$. We know $f$ and $g$ are in $\overline{k \mathsf{S}}_{\le n}$ for some $n$. Since $F$ is faithful restricted to this subcategory we conclude $f = g$. Thus $F$ is faithful.

(Co) To show $F$ is conservative, assume $f \maps y \to z$ in $\ksbar$ is such that $F(f)$ is an isomorphism. We know $f$ is in  $\overline{k \mathsf{S}}_{\le n}$ for some $n$. Since $F$ is conservative when restricted to this subcategory, we conclude $f$ is an isomorphism. Thus $F$ is conservative.

(Es) To show $F$ is essentially injective, assume $x,y \in \ksbar$ have $F(x) \cong F(y)$. We know $x, y \in \overline{k \mathsf{S}}_{\le n}$ for some $n$. Since $F$ is essentially injective when restricted to this subcategory, we conclude $x \cong y$. Thus $F$ is essentially injective.
\end{proof}

We hope that \cref{thm:splitting2} is a special case of a more general splitting principle.

\begin{conj}
\label{conj:main_conjecture}
    Let $\R$ be a 2-rig and $r \in \R$ an object of finite bosonic subdimension. Then there exists a 2-rig $\R'$ and a map of 2-rigs $E \maps \R \to \R'$ such that:
    \begin{enumerate}
    \item $E(r)$ splits as a direct sum of finitely many bosonic subline objects.
    \item $E \maps \R \to \R'$ is a \define{2-rig extension}, i.e., it is faithful, conservative, and essentially injective.
    \item $K(E) \maps K(\R) \to K(\R')$ is  injective.  
    \end{enumerate}
\end{conj}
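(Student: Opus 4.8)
The plan is to \emph{reduce the general statement to the universal case} already settled in \cref{thm:splitting2}, by a cobase-change (pushout) construction in the 2-category of 2-rigs. Since $\ksbar$ is the free 2-rig on one object, the object $r$ is classified by an essentially unique 2-rig map $\hat r \maps \ksbar \to \R$ with $\hat r(x) \cong r$. The hypothesis that $r$ has finite bosonic subdimension $N$ means $\Lambda^{N+1}(r) \cong 0$; equivalently $\hat r$ annihilates the Schur object $\Lambda^{N+1}(x) \in \ksbar$, so $\hat r$ descends to the quotient 2-rig presenting ``one object of subdimension $N$''. On the universal side, \cref{thm:free_2-rig_on_N_bosonic_sublines} supplies $\A^{\boxtimes N}$ together with the graded 2-rig map $F_N \maps \ksbar \to \A^{\boxtimes N}$, $x \mapsto s_1 \oplus \cdots \oplus s_N$, which likewise kills $\Lambda^{N+1}(x)$. (Granting \cref{conj:free_2-rig_on_object_of_subdimension_N}, this quotient is $\Rep(\M(N,k))$, so one could instead push out the entire square built from $B \maps \Rep(\M(N,k)) \to \Rep(k^N)$.)

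First I would show that the 2-category of 2-rigs admits the pushout
\[
\begin{tikzcd}
    \ksbar \ar[r, "\hat r"] \ar[d, "F_N"'] & \R \ar[d, "E"] \\
    \A^{\boxtimes N} \ar[r] & \R'
\end{tikzcd}
\]
and define $\R'$ and $E \maps \R \to \R'$ as this cobase change. Granting its existence, \textbf{property (1)} is immediate: commutativity gives $E(r) \cong E(\hat r(x))$ agreeing with the image of $F_N(x) = s_1 \oplus \cdots \oplus s_N$, so $E(r)$ splits as a direct sum of the $N$ bosonic subline objects obtained as images of $s_1, \dots, s_N$. Note that the subdimension hypothesis is exactly what makes this consistent: if $\Lambda^{N+1}(r)$ were nonzero, the map $E$ would send a nonzero object to $0$ and could not be conservative.

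\textbf{Property (2)} is the crux, and I expect it to be the main obstacle. One wants to transport the extension property of the universal map $F_N$ — more precisely of $F_N \circ i$ on each bounded-degree piece $\overline{k\S}_{\le n}$, proven via $B \circ A \circ i$ in \cref{thm:splitting2} — across the pushout, to conclude that the cobase change $E$ is faithful, conservative, and essentially injective. The difficulty is that \emph{these three forms of injectivity are not preserved by pushouts of 2-rigs in general}. The universal proof succeeded only because of special representation-theoretic input: Zariski density of $\GL(N,k)$ in $\M(N,k)$, split reductivity, and semisimplicity of $\Rep(\GL(N,k))$ (\cref{lem:E_ess_inj}), none of which is available for an arbitrary $\R$. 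Bridging this gap is precisely what keeps \cref{conj:main_conjecture} open. A plausible line of attack is to realize $\R'$ concretely, for instance as a category of graded or ``$k^N$-equivariant'' objects internal to $\R$ (generalizing the identification $\Rep(k^N) \simeq \A^{\boxtimes N}$ of \cref{thm:free_2-rig_on_N_bosonic_sublines}), and to produce a faithful, conservative functor back to $\R$ detecting the required properties; alternatively, one might isolate a flatness or exactness condition on $\hat r$ under which cobase change preserves extensions, and then verify that it holds whenever $r$ has finite subdimension.

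\textbf{Property (3)} should then follow from (2) under mild hypotheses by the argument of \cref{lem:E_ess_inj}: if $K(E)[a] = K(E)[b]$, then $E(a) \oplus c' \cong E(b) \oplus c'$ for some $c'$, and essential injectivity of $E$ forces $[a] = [b]$ whenever $\R$ is semisimple or its Grothendieck ring is suitably torsion-free. In the absence of semisimplicity, (3) would require a separate argument exploiting that $K(\ksbar)$ is the free $\lambda$-ring on one generator and that $K(F_N)$ is explicitly computable, transferring injectivity along the $\lambda$-ring map induced by $\hat r$.
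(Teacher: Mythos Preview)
This statement is labeled a \emph{conjecture} in the paper, not a theorem: the paper does not prove it, but only sketches a possible strategy in the paragraph immediately following it. Your proposal is essentially the same strategy --- construct $\R'$ as a 2-pushout (iso-cocomma) of the classifying map $\hat r \maps \ksbar \to \R$ against a universal splitting map, and then verify (1)--(3) for the resulting $E$ --- and you correctly identify step (2), stability of the extension property under cobase change, as the genuine obstruction that keeps the conjecture open.

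Two differences are worth flagging. First, the paper pushes out along $F \maps \ksbar \to \Ainfty$ of \cref{thm:splitting2}, which \emph{is} a 2-rig extension on all of $\ksbar$; you push out along $F_N \maps \ksbar \to \A^{\boxtimes N}$, which is \emph{not} an extension (it annihilates $\Lambda^{N+1}(x) \ne 0$), so that only its restrictions $F_N \circ i$ to $\ksbar_{\le n}$ with $n \le N$ are extensions. Your alternative of pushing out $B \maps \Rep(\M(N,k)) \to \Rep(k^N)$, contingent on \cref{conj:free_2-rig_on_object_of_subdimension_N}, is closer in spirit to pushing out a genuine extension. The tradeoff is that your choice makes item (1) automatic, whereas with the paper's $F$ one must still argue that the finite-subdimension hypothesis forces all but finitely many of the $s_i$ to vanish in $\R'$.

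Second, for item (3) the paper does not appeal to semisimplicity of $\R$. It instead isolates a weak cancellation hypothesis (\cref{lem:injectivity_on_K-theory}): if whenever $E(r_1) \oplus s \cong E(r_2) \oplus s$ one can arrange $E(r_1) \oplus I^n \cong E(r_2) \oplus I^n$ for some $n$, then essential injectivity of $E$ already yields injectivity of $K(E)$. This condition is strictly weaker than semisimplicity and is the form in which the paper records the remaining obligation for (3).
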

\noindent
Item (3) is one of the main classical applications of the splitting principle: to prove an equation involving $\lambda$-ring operations applied to some element $r \in K(\R)$ of finite bosonic subdimension, it suffices to prove the corresponding equation for $E(r) \in K(\R')$, where $E(r)$ splits as a sum of bosonic subline objects.

Here is a possible strategy for proving \cref{conj:main_conjecture}.  Because $\ksbar$ is the free 2-rig on one object $x$, there is a 2-rig map $\hat r \maps \ksbar \to \R$ with $\hat r(x) = r$, and this map is unique up to isomorphism.   Consider the following `2-pushout'---or more precisely, iso-cocomma object:
     \[
    \begin{tikzcd}
        \ksbar 
        \arrow[r, "F"]
        \arrow[d, "\hat r"']
        \arrow[dr, phantom, "\ulcorner", pos = 0.9]
        &
        \Ainfty
        \arrow[d, "G"]
        \\
        \R \arrow[r, "E"'] &
        \R'
    \end{tikzcd}
    \] 
Then the conjecture will follow if we can show:
    \begin{enumerate}
        \item $E(r)$ is a finite coproduct of bosonic subline objects.
        \item $E \maps \R \to \R'$ is a 2-rig extension.
        \item If for some $r_1,r_2 \in \R$, $r' \in \R'$ we have $E(r_1) \oplus r' \cong E(r_2) \oplus r'$ then for some $n$ we have $E(r_1) \oplus I^n \cong E(r_2) \oplus I^n$.
    \end{enumerate}
Thanks to \cref{lem:injectivity_on_K-theory}, item (3) implies that $E \maps \R \to \R'$ induces an injection of $\lambda$-rings, $K(E) \maps K(\R) \hookrightarrow K(\R')$.  

\section{Symmetric functions}
\label{sec:symmetric_functions}

In \cite{Schur} we showed that the Grothendieck group $K(\R)$ of any 2-rig $\R$ becomes a $\lambda$-ring in a functorial way, and that just as $\ksbar$ is the free 2-rig on one generator, $K(\ksbar)$ is the free $\lambda$-ring on one generator.  The 2-rig map 
\[    F \maps \ksbar \to \Ainfty \]
gives a $\lambda$-ring homomorphism
\[    K(F) \maps K(\ksbar) \to K(\Ainfty).\]

Now we conclude this paper by proving that:
\begin{itemize}
\item $K(F)$ is injective, so its image is isomorphic to $K(\ksbar)$.
\item $K(\Ainfty)$ is the ring of formal power series of bounded degree on countably many variables $x_1, x_2, x_3, \dots$.
\item The image of $K(F)$ is the ring of `symmetric functions', meaning those formal power series of bounded degree on countably many variables $x_i$ that are invariant under all permutations of these variables.
\end{itemize}
It follows that the ring of symmetric functions is the free $\lambda$-ring on one generator. This is, of course, well-known \cite{Hazewinkel}, but here we see it as a decategorified spinoff of results on 2-rigs.

\begin{thm}
\label{thm:K(Ainfty)}
As a graded ring, $K(\Ainfty)$ is isomorphic to the ring of formal power series of bounded total degree in countably many variables, graded by total degree.
\end{thm}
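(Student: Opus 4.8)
The plan is to compute $K(\Ainfty)$ one grade at a time and then reassemble, keeping careful track of two distinct infinitary behaviours---\emph{arbitrary} coefficients within a fixed grade versus \emph{finite} support across grades---since it is precisely their interplay that produces the ``bounded degree'' condition. I would avoid trying to compute $K(\Ainfty)$ as a limit $\varprojlim K(\A^{\boxtimes N})$ of graded rings, since $K$ need not commute with the defining limit of $\Ainfty$; working grade-by-grade sidesteps this. First I would exploit the graded structure: since $\Ainfty$ is a graded 2-rig, its grades give an equivalence $\bigoplus_{m \in \N} \Ainfty_m \simeq \Ainfty$ in $\Cauch\Lin$ (\cref{lem:CauchLin_has_coproducts}). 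Because every object of this coproduct has finite support, each object of $\Ainfty$ is a finite biproduct of objects drawn from the grades, and there are no cross-grade morphisms; hence the Grothendieck group splits as $K(\Ainfty) \cong \bigoplus_{m \in \N} K(\Ainfty_m)$, graded by $m$. This $\bigoplus$ (rather than a product) is exactly what will encode boundedness of degree.

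Next I would identify each graded piece. By \cref{lem:Ainfty_mth_grade} we have $\Ainfty_m \cong [\N^{(\infty)}(m), \Fin\Vect]$, and since $\N^{(\infty)}(m)$ is discrete this is the product category $\prod_{\alpha \in \N^{(\infty)}(m)} \Fin\Vect$, whose objects are \emph{arbitrary} (not finitely supported) families of finite-dimensional vector spaces. Isomorphism classes are dimension functions, forming the cancellative commutative monoid $\N^{\N^{(\infty)}(m)}$; its group completion is the group of formal differences, computed componentwise, namely $\Z^{\N^{(\infty)}(m)}$. Thus $K(\Ainfty_m) \cong \Z^{\N^{(\infty)}(m)}$, the group of \emph{arbitrary} $\Z$-valued functions on the degree-$m$ monomials.

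Then I would match this with power series and upgrade to a ring isomorphism. Indexing monomials of total degree $m$ by $\N^{(\infty)}(m)$, the degree-$m$ homogeneous part of the ring of bounded-degree power series is precisely $\Z^{\N^{(\infty)}(m)}$, and a bounded-degree series is by definition a \emph{finite} sum of such homogeneous parts; so that ring is $\bigoplus_m \Z^{\N^{(\infty)}(m)}$ as a graded abelian group, matching the previous steps. For the ring structure, the multiplication on $K(\Ainfty)$ is induced by $\otimes \maps \Ainfty_p \boxtimes \Ainfty_q \to \Ainfty_{p+q}$, and the explicit formula
\[
(F \otimes G)\big((m_i)\big) = \bigoplus_{(m_i) = (p_i) + (q_i)} F\big((p_i)\big) \otimes G\big((q_i)\big)
\]
becomes, on dimension functions, the convolution that defines multiplication of power series, i.e.\ $x^{(p_i)} x^{(q_i)} = x^{(p_i + q_i)}$. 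This completes the graded ring isomorphism.

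The main obstacle is conceptual bookkeeping rather than a hard estimate: one must not conflate the within-grade product $\prod_{\alpha \in \N^{(\infty)}(m)}$ with the across-grade direct sum $\bigoplus_m$. Getting this right is exactly what distinguishes the limit taken in \emph{graded} rings (bounded degree, the ring $S$) from the larger limit taken in plain rings (the ring $R$), as discussed earlier in this section. A secondary technical point to nail down is the claim $K\big(\prod_\alpha \Fin\Vect\big) \cong \prod_\alpha \Z$---that the group completion of the infinite product monoid $\N^{\N^{(\infty)}(m)}$ really is $\Z^{\N^{(\infty)}(m)}$---which follows from cancellativity but deserves an explicit line.
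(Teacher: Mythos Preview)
Your proposal is correct and follows essentially the same approach as the paper: compute $K(\Ainfty_m)$ using \cref{lem:Ainfty_mth_grade} to get $\Z^{\N^{(\infty)}(m)}$, then reassemble via the coproduct decomposition $\Ainfty \simeq \bigoplus_m \Ainfty_m$ to obtain $\bigoplus_m \Z[[x_1,x_2,\ldots]]_m$, with the ring structure read off from the explicit convolution formula for $\otimes$. The only cosmetic difference is that the paper packages the step $K(\bigoplus_m \Ainfty_m) \cong \bigoplus_m K(\Ainfty_m)$ as a standalone lemma (\cref{lem:K_preserves_coproducts}) rather than arguing it directly from finite support and absence of cross-grade morphisms as you do.
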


\begin{proof}
We calculate $K(\Ainfty)$ graded by grade.  We have isomorphisms of abelian groups
\begin{align*}
    K(\Ainfty_m) 
    &\cong K([\N^{(\infty)}(m), \Fin\Vect]) 
    \\ & \cong K\left(\textstyle{\prod_{(m_i) \in \N^{(\infty)}(m)}} \Fin\Vect\right) 
    \\& \cong \textstyle{\prod_{(m_i) \in \N^{(\infty)}(m)}} K(\Fin\Vect) 
    \\& \cong \textstyle{\prod_{(m_i) \in \N^{(\infty)}(m)}}\mathbb{Z} 
    \\& \cong \mathbb{Z}[[x_1, x_2, \ldots]]_m
\end{align*}
where $x_i$ represents the isomorphism class $[s_i]$ of the functor $s_i \maps \N^{(\infty)}(1) \to \Fin\Vect$ that takes the sequence $(0, \ldots, 0, 1, 0, \ldots)$ (with $1$ in the $i^{th}$ place) to $k$, and all other sequences to $0$. By \cref{lem:K_preserves_coproducts}, $K$ preserves coproducts.  Thus we have
\[ K(\Ainfty) = K(\bigoplus_m \Ainfty_m) \cong \bigoplus_m K(\Ainfty_m) \cong \bigoplus_m \mathbb{Z}[[x_1, x_2, \ldots]]_m.
\]
At right we have the graded ring of formal power series of bounded degree in countably many variables. $K(\Ainfty)$ is
isomorphic to this not only as graded abelian group, but as a graded ring, thanks to our description of the graded 2-rig structure on $\Ainfty$ near the end of \cref{sec:Ainfty}.
\end{proof}

In \cite[Sec.\ 6]{Schur} we explained how taking the Grothendieck group defines a 2-functor
\[    K \maps \Cauch\Lin_0 \to \Ab \]
where \(\Cauch\Lin_0\) is the 2-category with 
\begin{itemize}
\item Cauchy complete $k$-linear categories as objects, 
\item $k$-linear functors as morphisms, 
\item $k$-linear natural \emph{isomorphisms} as 2-morphisms,
\end{itemize}
and we treat the category $\Ab$ of abelian groups as a 2-category with only identity morphisms.  In particular we described $K$ as the composite
\[
\begin{tikzcd}
    \Cauch\Lin_0
    \arrow[r, swap, "J"] \arrow[rr, bend left, "K"]
    &
    \CMon 
    \arrow[r, swap, "\Z \otimes_\N -"]
    &
    \Ab
\end{tikzcd}
\]
where $J$ sends any Cauchy complete $k$-linear category to its set of isomorphism classes of objects, which is a commutative monoid with binary coproduct as addition, and $\Z \otimes_\N -$ is group completion.

\begin{lem}
\label{lem:K_preserves_coproducts} 
$K \maps \Cauch\Lin_0 \to \Ab$ preserves coproducts.
\end{lem}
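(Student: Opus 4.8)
The plan is to exploit the factorization
\[
K = (\Z \otimes_\N -) \circ J \maps \Cauch\Lin_0 \to \CMon \to \Ab
\]
recorded just above, and to show that each of the two factors preserves coproducts; since a composite of coproduct-preserving functors preserves coproducts, this yields the claim. The relevant coproducts are the $\bigoplus$ construction of \cref{lem:CauchLin_has_coproducts} in $\Cauch\Lin_0$ (this construction depends only on objects and isomorphisms, so it also serves as the coproduct here), together with the direct sum, which is the coproduct, in each of $\CMon$ and $\Ab$.

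The second factor is immediate: group completion $\Z \otimes_\N - \maps \CMon \to \Ab$ is left adjoint to the forgetful functor $\Ab \to \CMon$, and left adjoints preserve all colimits, in particular coproducts. So it remains to treat $J$.

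For $J$ I would argue directly from the explicit description of $\bigoplus_\alpha \C_\alpha$ in \cref{lem:CauchLin_has_coproducts}. An object is a tuple $(c_\alpha)_{\alpha \in A}$ with $c_\alpha \in \C_\alpha$ and all but finitely many $c_\alpha$ zero; a morphism $(c_\alpha) \to (d_\alpha)$ is a tuple of morphisms, with composition computed componentwise. Hence such a tuple is an isomorphism precisely when each component is, so the isomorphism class of $(c_\alpha)$ is determined by and determines the tuple of classes $([c_\alpha])_{\alpha \in A}$, all but finitely many of which are the class of the zero object, i.e.\ the identity element of $J(\C_\alpha)$. This is exactly the underlying set of $\bigoplus_\alpha J(\C_\alpha)$ in $\CMon$. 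Moreover the biproduct in $\bigoplus_\alpha \C_\alpha$ is computed componentwise, $(c_\alpha) \oplus (d_\alpha) = (c_\alpha \oplus d_\alpha)$, so the commutative monoid operation on $J(\bigoplus_\alpha \C_\alpha)$ agrees coordinatewise with that of $\bigoplus_\alpha J(\C_\alpha)$. One then checks that this identification is the canonical map induced by the coprojections $i_\beta$, giving $J(\bigoplus_\alpha \C_\alpha) \cong \bigoplus_\alpha J(\C_\alpha)$ as commutative monoids.

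The content here is routine, and I expect no genuine obstacle; the only points demanding care are bookkeeping ones. First, the ``all but finitely many zero'' condition in \cref{lem:CauchLin_has_coproducts} must be matched against the coproduct in $\CMon$ being the finitely supported direct sum rather than the full product---this is precisely why one obtains $\bigoplus$ and not $\prod$. Second, one should confirm that the comparison isomorphism is the canonical one built from the coprojections, so that it is natural and compatible with the respective universal properties.
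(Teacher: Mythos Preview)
Your proposal is correct and follows essentially the same approach as the paper: factor $K$ as $(\Z \otimes_\N -) \circ J$, observe that group completion preserves coproducts, and verify directly from the explicit description of $\bigoplus_\alpha \C_\alpha$ that $J$ sends it to $\bigoplus_\alpha J(\C_\alpha)$. Your write-up is slightly more detailed (you justify the group-completion step via the left adjoint and explicitly match the monoid operation), but the argument is the same.
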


\begin{proof}
It is clear that $\mathbb{Z} \otimes_\mathbb{N} -$ preserves coproducts. Now we argue that $J$ takes coproducts in $\Cauch\Lin_0$ to coproducts in $\CMon$. For this we must show that given a tuple of objects $(\C_\alpha)_{\alpha \in A}$ in $\Cauch\Lin$, the evident comparison map 
\[   \bigoplus_\alpha J(\C_\alpha) \to J(\bigoplus_\alpha \C_\alpha) \] 
is an isomorphism. 

To see this, note from \cref{lem:CauchLin_has_coproducts} that an isomorphism in $\bigoplus_\alpha \C_\alpha$ is a tuple $(f_\alpha \maps c_\alpha \to d_\alpha)_{\alpha \in A}$ where each $f_\alpha$ is an isomorphism in $\C_\alpha$ and all but finitely many of the objects $c_\alpha, d_\alpha \in C_\alpha$ are nonzero.  Thus, an element of $J(\bigoplus_\alpha \C_\alpha)$ amounts to the same thing as a tuple $([c_\alpha])_{\alpha \in A}$ where $[c_\alpha]$ is an isomorphism class in $\C_\alpha$ and all but finitely many of the objects $c_\alpha$ are nonzero.  But this is an element of $\bigoplus_\alpha J(\C_\alpha)$, so the comparison map is an isomorphism.
\end{proof}

\begin{defn} 
The ring of \define{symmetric functions}, $\Lambda$, is defined to be the subring of 
\[   \bigoplus_m \mathbb{Z}[[x_1, x_2, \ldots]]_m 
\]  
consisting of elements that are invariant under all permutations of the variables $x_i$.
\end{defn}

Thus, we have an inclusion of graded rings $\Lambda \subseteq K(\Ainfty)$, and using this we can show $\Lambda$ is isomorphic to $K(\ksbar)$:

\begin{thm}
\label{thm:injection_on_K-theory}
The 2-rig map $F \maps \ksbar \to \Ainfty$ induces an injective homomorphism of $\lambda$-rings
\[   K(F) \maps K(\ksbar) \to K(\Ainfty)  \]  
whose image is $\Lambda$.  
\end{thm}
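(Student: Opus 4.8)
The plan is to combine three ingredients already in hand: the result of \cite{Schur} that $K(\ksbar)$ is the free $\lambda$-ring on the single generator $[x]$; the explicit graded description $K(\Ainfty) \cong \bigoplus_m \Z[[x_1,x_2,\dots]]_m$ from \cref{thm:K(Ainfty)}; and the fact that $F$ is a \emph{graded} 2-rig map, hence commutes with Schur functors and preserves gradings. Because $K(F)$ is a $\lambda$-ring homomorphism out of the free $\lambda$-ring on $[x]$, its image is exactly the $\lambda$-subring of $K(\Ainfty)$ generated by $K(F)([x]) = [F(x)]$. So the whole problem reduces to understanding this one element and the $\lambda$-operations applied to it.

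First I would identify the generators of the image. From the description of $\Ainfty_1$ as $[\N^{(\infty)}(1), \Fin\Vect]$ and of $F(x)$ as the constant functor, one reads off $[F(x)] = \sum_i x_i = e_1$. For the higher operations I would use that $F$, being a 2-rig map, intertwines the exterior-power Schur functors, so $K(F)(\lambda^n[x]) = [F(\Lambda^n x)] = [\Lambda^n F(x)]$. Rather than wrestle with exterior powers of the infinite coproduct $s_1 \oplus s_2 \oplus \cdots$ directly, I would evaluate grade by grade through each projection $\pi_N \maps \Ainfty \to \A^{\boxtimes N}$, reducing to the finite identity $\Lambda^n(s_1 \oplus \cdots \oplus s_N) \cong \bigoplus_{i_1 < \cdots < i_n \le N} s_{i_1} \otimes \cdots \otimes s_{i_n}$, whose class is the elementary symmetric polynomial $e_n(x_1,\dots,x_N)$; passing to the limit guaranteed by \cref{lem:Ainfty_mth_grade} yields $K(F)(\lambda^n[x]) = e_n$. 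Invoking the fundamental theorem of symmetric functions, $\Lambda = \Z[e_1,e_2,\dots]$, this gives $\Lambda \subseteq \im K(F)$.

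For the reverse inclusion I would use the action of the group of finitary permutations of $\{1,2,3,\dots\}$ on $\Ainfty$ by graded 2-rig automorphisms permuting the sublines $s_i$. This induces $\lambda$-ring automorphisms of $K(\Ainfty)$ permuting the variables $x_i$, so the invariant subring is a $\lambda$-subring; for bounded-degree elements, invariance under finitary permutations coincides with full symmetry, so this invariant subring is precisely $\Lambda$. Since $[F(x)] = \sum_i x_i$ is invariant, the $\lambda$-subring it generates—that is, $\im K(F)$—is contained in $\Lambda$. Combining the two inclusions gives $\im K(F) = \Lambda$.

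Injectivity then follows from the grading. Since $K(F)$ preserves grades, in grade $n$ it restricts to a surjection $K(\ksbar)_n \twoheadrightarrow \Lambda_n$. Now $K(\ksbar)_n \cong K(\overline{kS_n})$ is free abelian of rank $p(n)$, the number of partitions of $n$ (using that $\overline{kS_n}$ is semisimple and that $\Q$, hence $k$, is a splitting field for $S_n$, as recalled earlier), while $\Lambda_n$ is free abelian of the same rank $p(n)$, with the Schur functions as a basis. A surjection between free abelian groups of equal finite rank is an isomorphism, hence injective, so each $K(F)_n$ is injective and therefore so is $K(F)$. The main obstacle I expect is the second step—establishing $K(F)(\lambda^n[x]) = e_n$ cleanly—since it is where the infinite coproduct must be controlled; the device of computing one grade at a time through the finite projections $\pi_N$ and taking the limit is what makes it go through. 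Verifying that the finitary permutations act by honest graded 2-rig automorphisms is the only other point requiring care, and it is routine.
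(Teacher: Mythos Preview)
Your argument is correct, and for the image computation it closely parallels the paper's: both show $\Lambda \subseteq \im K(F)$ via the elementary symmetric functions and the Fundamental Theorem, and both show the reverse inclusion via a permutation action on $\Ainfty$ (the paper uses the full $S_\infty$ rather than finitary permutations, but as you note this makes no difference for bounded-degree elements).

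The genuine difference is in the injectivity step. The paper deduces injectivity from the categorified splitting principle itself: $F$ is essentially injective by \cref{thm:splitting2}, and $\Ainfty$ satisfies cancellation (\cref{lem:Ainfty_cancellable}), and these combine via the general \cref{lem:injectivity_on_K-theory} to give injectivity of $K(F)$. Your route instead bypasses the essential injectivity of $F$ entirely: having already pinned down the image as $\Lambda$, you observe that $K(F)$ is graded, so in each grade it is a surjection $K(\overline{kS_n}) \twoheadrightarrow \Lambda_n$ between free abelian groups of the same finite rank $p(n)$, hence an isomorphism. This is more self-contained and avoids invoking the hard work of Sections~10--14; on the other hand, the paper's approach illustrates how the decategorified injectivity is a shadow of the categorified extension property, and its \cref{lem:injectivity_on_K-theory} is packaged for reuse in the general conjecture. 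Both are clean; yours is shorter if one only wants the classical statement.
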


\begin{proof}
The injectivity of $K(F)$ follows from Lemmas \ref{lem:injectivity_on_K-theory} and \ref{lem:Ainfty_cancellable}.  Here we show that the image of $K(F)$ is $\Lambda$.

Let $S_\infty$ be the group of all permutations of $\{1, 2, 3, \dots\}$.  This group acts on the set $\N^{(\infty)}(m)$ of sequences of natural numbers whose sum is $m$. This in turn gives a strict action of $S_\infty$ as automorphisms of the Cauchy complete linear category $[\N^{(\infty)}(m),\Fin\Vect]$, which by  \cref{lem:Ainfty_mth_grade} is isomorphic to the $m$th grade of $\Ainfty$.  By the explicit description of the graded 2-rig structure on $\Ainfty$ appearing directly after that lemma, it follows that these actions on each grade fit together to give a strict action of $S_\infty$ on $\Ainfty$ as graded 2-rig automorphisms.  Applying the functor $K$, this action becomes the action of $S_\infty$ on 
\[    K(\Ainfty) \; \cong \; \bigoplus_m \Z[[x_1, x_2, \dots]]_m \]
given by permuting variables.   The elements of $K(\Ainfty)$ fixed by this action of $S_\infty$ are precisely the symmetric functions.

The object $F(x) = s_1 \oplus s_2 \cdots \in \Ainfty$ is fixed up to isomorphism by $S_\infty$.  Since $\ksbar$ is generated as a 2-rig by $x$, it follows that every object $a$ in the essential image of $F$ is fixed up to isomorphism by $S_\infty$.   It follows that $[a] \in K(\Ainfty)$ is fixed by $S_\infty$, and is thus a symmetric function.   Thus, the image of $K(F)$ is contained in $\Lambda$.

For the reverse inclusion we can use the Fundamental Theorem of Symmetric Functions \cite[Sec.\ I.2]{Macdonald}, which implies that $\Lambda$ is generated as a ring by the elementary symmetric functions
\[  e_n(x_1, x_2, \dots ) = \sum_{i_1 < \cdots < i_n} x_{i_1} \cdots x_{i_n}. \]
Each elementary symmetric function is in the image of $K(F)$, because 
\[    e_n = [\Lambda^n(s_1 \oplus s_2 \oplus \cdots)]. \]
Thus, $\Lambda$ is contained in the image of $K(F)$.
\end{proof}

We state the following lemma in more generality than needed for \cref{thm:injection_on_K-theory}, with a view to making item (3) in \cref{conj:main_conjecture} as weak as possible while still giving an injection of Grothendieck groups.

\begin{lem}
\label{lem:injectivity_on_K-theory}
    Suppose $M \maps \R \to \S$ is a 2-rig map with the following properties:
    \begin{enumerate}
        \item $M$ is essentially injective
        \item If for some $r,r' \in \R, s \in \S$ we have $M(r) \oplus s \cong M(r') \oplus s$
    then for some $n$ we have $M(r) \oplus I^n \cong M(r') \oplus I^n$.
    \end{enumerate}
    Then $K(M) \maps K(\R) \to K(\S)$ is injective.
\end{lem}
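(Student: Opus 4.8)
The plan is to unwind the definition of the Grothendieck group and chase a putative kernel element through the two hypotheses. Recall that $K(\R)$ is the group completion of the commutative monoid $J(\R)$ of isomorphism classes of objects of $\R$ under $\oplus$. Consequently every element of $K(\R)$ can be written as a formal difference $[r] - [r']$ of classes of honest objects $r, r' \in \R$, and such a difference vanishes precisely when $r \oplus e \cong r' \oplus e$ for some object $e \in \R$. Injectivity of $K(M)$ is therefore equivalent to the implication: if $[M(r)] = [M(r')]$ in $K(\S)$, then $[r] = [r']$ in $K(\R)$.

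First I would take an arbitrary kernel element, write it as $[r] - [r']$, and note that $K(M)([r]-[r']) = [M(r)] - [M(r')]$. The hypothesis that this vanishes in $K(\S)$ unpacks, by the same description of group completion, to the existence of an object $s \in \S$ with $M(r) \oplus s \cong M(r') \oplus s$. This is exactly the premise of condition (2), so invoking (2) supplies a natural number $n$ for which $M(r) \oplus I^n \cong M(r') \oplus I^n$, where $I$ denotes the unit and $I^n$ its $n$-fold coproduct.

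The next step is the one genuinely structural move: recognizing the added copies of $I$ as lying in the essential image of $M$. Since $M$ is a 2-rig map it is strong monoidal, hence preserves the unit, and it preserves finite coproducts; thus $M(I^n) \cong I^n$, where on the left $I$ is the unit of $\R$. Consequently $M(r \oplus I^n) \cong M(r) \oplus I^n \cong M(r') \oplus I^n \cong M(r' \oplus I^n)$, and essential injectivity, hypothesis (1), applies to yield $r \oplus I^n \cong r' \oplus I^n$ in $\R$. Note that both hypotheses are used here in an essential way: condition (2) is precisely what allows us to trade the arbitrary object $s$, which need not lie in the image of $M$, for copies of the unit, and condition (1) is what lets us descend the resulting isomorphism back along $M$.

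Finally I would return to the Grothendieck group: the isomorphism $r \oplus I^n \cong r' \oplus I^n$ gives $[r] + n[I] = [r'] + n[I]$ in $K(\R)$, and cancelling $n[I]$—which is legitimate because $K(\R)$ is a group—leaves $[r] = [r']$, so the kernel element was zero. The only point requiring mild care is the bookkeeping of the opening paragraph, namely that a general element of the kernel really is represented by a difference of two object-classes and that the equation $[M(r)] = [M(r')]$ in $K(\S)$ genuinely unpacks to a stable isomorphism after adding a single object $s$; but this is nothing more than the standard universal property of group completion, so I expect no real obstacle.
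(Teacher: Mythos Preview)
Your proof is correct and follows essentially the same route as the paper: unpack equality in the group completion as a stable isomorphism, use condition (2) to replace the stabilizing object $s$ by copies of the unit, use that $M$ preserves $I$ and finite coproducts to pull this inside $M$, and apply essential injectivity. The paper's argument is virtually identical, only slightly terser in not spelling out why $M(I^n)\cong I^n$.
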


\begin{proof} 
      Two elements $a,b$ of a commutative monoid $A$ give equal elements $\overline a = \overline b$ in the group completion $\overline A$ if and only if there exists an element $c \in A$ such that $a + c = b + c$.
      Given an element $r$ of a 2-rig $\R$, write $[r]$ for its isomorphism class and $\overline{[r]}$ for the corresponding element in $K(\R)$.
      Suppose $r, r' \in \R$ have $\overline{[M(r)]} = \overline{[M(r')]}$. Then there exists an object $s \in \S$ such that $[M(r)] + [s] = [M(r')] + [s]$, or equivalently $[M(r) \oplus s] = [M(r') \oplus s]$, or equivalently $M(r) \oplus s \cong M(r') \oplus s$. Condition (2) implies  $M(r) \oplus I^n \cong M(r') \oplus I^n$ for some $n$.  Since $M$ is a 2-rig map, we have $M(r \oplus I^n) \cong M(r' \oplus I^n)$. Since $M$ is essentially injective, we have $r \oplus I^n \cong r' \oplus I^n$, which gives $[r] = [r']$ in $J(\R)$, which in turn gives $\overline{[r]} = \overline{[r']}$, as desired.
\end{proof}

Condition (2) is always true when $S$ is the 2-rig of finitely generated projective modules of a commutative ring since then every $s \in \S$ is a summand of a finitely generated free module $I^n$.  Condition (2) is implied by the stronger condition one might call `cancellability in the image of $M$':
\begin{itemize} 
    \item If for some $r,r' \in \R, s \in \S$ we have $M(r) \oplus s \cong M(r') \oplus s$ then $M(r) \cong M(r')$.
\end{itemize}
This in turn is implied by cancellability in $\S$:
\begin{itemize} 
    \item If for some $t,t',s \in \S$ we have $t \oplus s \cong t' \oplus s$, then $t \cong t'$.
\end{itemize}
This last condition holds in the case of present interest, $\S = \Ainfty$.

\begin{lem}
\label{lem:Ainfty_cancellable}
If for some $t,t',s \in \Ainfty$ we have $t \oplus s \cong t' \oplus s$, then $t \cong t'$.
\end{lem}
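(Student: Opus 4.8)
The plan is to reduce the cancellation property for $\Ainfty$ to the elementary cancellation property for $\Fin\Vect$, by unwinding the two layers of structure in the explicit description of $\Ainfty$ obtained in \cref{sec:Ainfty}. Recall that $\Ainfty = \bigoplus_{m \in \N} \Ainfty_m$ as a Cauchy complete linear category, and by \cref{lem:Ainfty_mth_grade} each grade is isomorphic to $[\N^{(\infty)}(m), \Fin\Vect]$, which, since its domain is a discrete category, is just the product category $\prod_{\alpha \in \N^{(\infty)}(m)} \Fin\Vect$. Both the outer coproduct $\bigoplus_m$ (via the construction in \cref{lem:CauchLin_has_coproducts}) and the inner product compute biproducts componentwise and have isomorphisms that are exactly componentwise families of isomorphisms.

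First I would record this componentwise behaviour at both levels. In $\bigoplus_m \Ainfty_m$ an object is a finitely-supported tuple $(X_m)_m$, the biproduct is $(X \oplus Y)_m = X_m \oplus Y_m$, and $X \cong Y$ holds if and only if $X_m \cong Y_m$ in $\Ainfty_m$ for every $m$. Descending into $\Ainfty_m \cong \prod_\alpha \Fin\Vect$, the same holds pointwise: writing $(X_m)_\alpha$ for the value at $\alpha \in \N^{(\infty)}(m)$, we have $((X \oplus Y)_m)_\alpha = (X_m)_\alpha \oplus (Y_m)_\alpha$, and isomorphism is checked index-by-index in $\Fin\Vect$.

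Given $t \oplus s \cong t' \oplus s$ in $\Ainfty$, applying these two reductions yields, for every grade $m$ and every index $\alpha \in \N^{(\infty)}(m)$, an isomorphism $(t_m)_\alpha \oplus (s_m)_\alpha \cong (t'_m)_\alpha \oplus (s_m)_\alpha$ in $\Fin\Vect$. Here the decisive finiteness enters: each $(s_m)_\alpha$ is finite-dimensional, so taking dimensions gives an equation of natural numbers $\dim (t_m)_\alpha + \dim (s_m)_\alpha = \dim (t'_m)_\alpha + \dim (s_m)_\alpha$, and cancellativity of $\N$ forces $\dim (t_m)_\alpha = \dim (t'_m)_\alpha$, whence $(t_m)_\alpha \cong (t'_m)_\alpha$. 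Since this holds for all $\alpha$ and all $m$, reassembling componentwise gives $t_m \cong t'_m$ for every $m$ and finally $t \cong t'$.

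I do not expect a genuine obstacle here; the only point that deserves care is that a single grade may carry infinitely many nonzero components (for instance $s_1 \oplus s_2 \oplus \cdots$ has $k$ in infinitely many slots of $\Ainfty_1$), and one must confirm this causes no difficulty. It does not, precisely because cancellation is verified independently at each index $\alpha$ and never involves summing dimensions across indices: the finite-support condition on $\bigoplus_m$ only keeps the number of nonzero grades finite, while it is the finite-dimensionality of each individual $(s_m)_\alpha$ that validates the single-index cancellation in $\Fin\Vect$.
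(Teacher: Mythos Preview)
Your proposal is correct and follows essentially the same approach as the paper: reduce to the grades $\Ainfty_m \cong [\N^{(\infty)}(m), \Fin\Vect]$ via \cref{lem:Ainfty_mth_grade}, then use that coproducts and isomorphisms are pointwise to reduce to cancellation in $\Fin\Vect$. Your version is simply more detailed, spelling out the componentwise behaviour and the dimension argument that the paper leaves implicit, and your closing remark about infinite support within a single grade is a useful clarification that the paper omits.
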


\begin{proof}
Since $\Ainfty$ is the coproduct in $\Cauch\Lin$ of its grades $\Ainfty_m$, it suffices to prove this grade by grade, where by \cref{lem:Ainfty_mth_grade} we can use the isomorphism of Cauchy complete linear categories
\[     \Ainfty_m \cong [\N^{(\infty)}(m), \Fin\Vect]   .\]
Since coproducts are computed pointwise in $[\N^{(\infty)}(m), \Fin\Vect]$, the desired result then follows from the corresponding result in $\Fin\Vect$.
\end{proof}

\appendix

\section{The fermionic story}
\label{app:fermionic}

Our treatment has largely neglected the role of supersymmetry in the theory of 2-rigs, which should ultimately be taken into account. Given any Young diagram we can reflect it across the diagonal and get a new Young diagram, with rows of the original diagram becoming columns of the new one and vice versa. As we shall see, this reflection symmetry arises from a kind of involution on $\ksbar$. This involution switches the two 1-dimensional irreducible representations of $S_n$:
\[ \raisebox{1.2 em}{\yng(3)} \qquad \raisebox{1.5em}{$\mapsto$} \qquad \yng(1,1,1) \]
whose corresponding Schur functors are $S^n$ and $\Lambda^n$, respectively.
Since $S^n$ and $\Lambda^n$ are used to define the bosonic and fermionic versions of line objects, subline objects, dimension and subdimension, this suggests that these pairs of concepts should be treated on an equal footing, but we have not yet done so. As a small step in this direction, here we describe the free 2-rig on a fermionic subline object, the free 2-rig on a fermionic subline object, and the involution on $\ksbar$.

In \cref{thm:free_2-rig_on_bosonic_subline} we saw that the free 2-rig on a bosonic subline object is the monoidal $k$-linear category of $\N$-graded vector spaces of finite total dimension, equipped with the symmetry where $S_{s,s} \maps s \otimes s \to s \otimes s$ is the identity for any 1-dimensional vector space $s$ of grade 1. But the same monoidal $k$-linear category also admits another symmetry, determined by the fact that $S_{s, s} = -1_{s \otimes s}$. This gives a 2-rig we call $\widehat{\A}$. 

\begin{thm} 
\label{thm:free_2-rig_on_fermionic_subline}
$\widehat{\A}$ is the free 2-rig on a fermionic subline object. That is, given a 2-rig $\R$ containing a fermionic subline object $x$, there is a map of 2-rigs $F \maps \widehat{\A} \to \R$ with $F(s) = x$, and $F$ is unique up to isomorphism.
\end{thm}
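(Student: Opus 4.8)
The plan is to mirror the proof of \cref{thm:free_2-rig_on_bosonic_subline} step by step, the only genuinely new ingredient being the bookkeeping of Koszul signs. First I would define the candidate 2-rig map $F \maps \widehat{\A} \to \R$ by the same formula as in the bosonic case,
\[ F(V) = \bigoplus_{n \geq 0} V_n \cdot x^{\otimes n}, \]
so that $F(s) = x$. Exactly as before, writing an arbitrary object of $\widehat{\A}$ as $\bigoplus_n i_{\widehat{\A}}(V_n) \otimes s^{\otimes n}$ and using that any 2-rig map preserves coproducts and tensor products together with the initiality of $\Fin\Vect$, this formula is forced up to isomorphism on any 2-rig map sending $s$ to $x$. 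This simultaneously yields the strong monoidal structure $F(V \otimes W) \cong F(V) \otimes F(W)$ and the uniqueness of $F$ up to monoidal natural isomorphism.

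The one point that is new is checking that $F$ is \emph{symmetric} monoidal, and this is where the sign enters. On $\widehat{\A}$ the symmetry carries a Koszul sign: on the homogeneous component $V_i \otimes W_j$ it is $(-1)^{ij}$ times the usual swap of tensor factors. I would verify that this matches the symmetry on the target by computing the braiding $\sigma_{x^{\otimes m}, x^{\otimes n}}$ in $\R$. Since $x$ is a fermionic subline object we have $\sigma_{x,x} = -1_{x \otimes x}$, and moving the block $x^{\otimes m}$ past the block $x^{\otimes n}$ decomposes into $mn$ transpositions of adjacent copies of $x$, each contributing a factor of $-1$; hence $\sigma_{x^{\otimes m}, x^{\otimes n}}$ equals $(-1)^{mn}$ times the canonical reindexing isomorphism. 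This is precisely the sign appearing in the symmetry of $\widehat{\A}$ on the bigraded piece of degrees $m$ and $n$, so $F$ intertwines the two symmetries.

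The main obstacle---really the only place requiring care---is the sign computation just described: one must confirm that $\sigma_{x^{\otimes m}, x^{\otimes n}}$ is indeed $(-1)^{mn}$ times reindexing independently of how the $mn$ adjacent transpositions are composed, which follows from naturality of the symmetry and the hexagon axioms together with $\sigma_{x,x} = -1_{x \otimes x}$. Once this is established, the remaining verifications---that $F$ is $k$-linear, preserves biproducts and split idempotents, and that the isomorphism $F(V \otimes W) \cong F(V) \otimes F(W)$ is compatible with the associators and unitors---are identical to the bosonic case and require no new ideas.
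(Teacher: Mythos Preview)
Your proposal is correct and follows essentially the same approach as the paper's proof: define $F$ by the same formula as in the bosonic case, invoke \cref{thm:free_2-rig_on_bosonic_subline} for the monoidal structure and uniqueness, and then verify symmetry by matching the Koszul sign $(-1)^{mn}$ in $\widehat{\A}$ against the sign arising from $\sigma_{x,x}=-1$ in $\R$. Your treatment of the sign computation is in fact more explicit than the paper's, which simply asserts that the signs match without spelling out the $mn$-transposition argument.
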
 

\begin{proof}
As a monoidal $k$-linear category $\widehat{\A}$ is the same as $\A$, and we again take $s$ to be any 1-dimensional vector space in grade $1$. However the symmetry on $\widehat{\A}$ introduces a sign change when permuting homogeneous elements of odd degree. Thus, the functor $F \maps \widehat{\A} \to \R$ defined by
\[        F(V) = \bigoplus_{n \ge 0} V_n \cdot x^{\otimes n} \]
is monoidal and $k$-linear as in  \cref{thm:free_2-rig_on_bosonic_subline}, but it is symmetric monoidal because the extra sign in the symmetry of $\widehat{\A}$ matches the sign change that occurs for the symmetry on $x^{\otimes j} \otimes x^{\otimes k}$ when both $j$ and $k$ are odd. One can also check that $F$ is unique up a monoidal natural isomorphism.
\end{proof}

\begin{example}
\label{ex:fermionic_subline_counterexample}
Not every fermionic subline object is a subobject of a line object. For since the tensor product in $\widehat{\A}$ is the usual tensor product of $\N$-graded vector spaces, the only line object in $\widehat{\A}$ is the tensor unit $I$. Thus, the fermionic subline object $s \in \widehat{\A}$ is not a subobject of any line object.
\end{example}

There is a similar story for line objects. In \cref{thm:free_2-rig_on_bosonic_line} we saw that the free 2-rig on a bosonic line object is the monoidal $k$-linear category of $\Z$-graded vector spaces of finite total dimension, with the symmetry where $S_{\ell,\ell}$ is the identity for any 1-dimensional vector space $\ell$ of grade 1. This monoidal $k$-linear category admits another symmetry determined by the fact that $S_{\ell, \ell} = -1_{\ell \otimes \ell}$. This gives a 2-rig we call $\widehat{\T}$. 

\begin{thm} 
\label{thm:free_2-rig_on_fermionic_line}
$\widehat{\T}$ is the free 2-rig on a fermionic line object. 
\end{thm}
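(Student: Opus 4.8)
The plan is to follow the proof of \cref{thm:free_2-rig_on_bosonic_line} essentially verbatim at the level of the underlying monoidal $k$-linear functor, and then to graft onto it the sign bookkeeping introduced in the proof of \cref{thm:free_2-rig_on_fermionic_subline}. Concretely, given a 2-rig $\R$ containing a fermionic line object $x$, I would define $F \maps \widehat{\T} \to \R$ by
\[ F(V) = \bigoplus_{n \in \Z} V_n \cdot x^{\otimes n}, \]
where, exactly as in the bosonic line case, a negative tensor power $x^{\otimes n}$ is interpreted as $(x^\ast)^{\otimes |n|}$ for the inverse object $x^\ast$. The canonical isomorphisms $F(V \otimes W) \cong F(V) \otimes F(W)$ making $F$ strong monoidal are assembled homogeneous-component by homogeneous-component just as in \cref{thm:free_2-rig_on_bosonic_line}, and in particular $F(\ell) = x$.

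The one genuinely new point is symmetry. On $\widehat{\T}$ the braiding acquires a sign $(-1)^{jk}$ when a homogeneous vector of degree $j$ is interchanged with one of degree $k$. To show $F$ respects this, I would verify that the hypothesis $\sigma_{x,x} = -1_{x \otimes x}$ forces the Koszul sign rule
\[ \sigma_{x^{\otimes j},\, x^{\otimes k}} = (-1)^{jk}\, \text{(swap)} \]
for all $j, k \in \Z$. For nonnegative $j,k$ this is precisely the computation already used in \cref{thm:free_2-rig_on_fermionic_subline}: writing the braiding of two blocks as a product of $jk$ adjacent transpositions, each contributing $\sigma_{x,x} = -1$, yields the sign $(-1)^{jk}$. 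Extending the rule to negative exponents is where the line hypothesis (as opposed to merely subline) does real work: I would first check that the inverse object $x^\ast$ is again a fermionic line object, i.e.\ $\sigma_{x^\ast, x^\ast} = -1_{x^\ast \otimes x^\ast}$, and that the evaluation and coevaluation maps relating $x$ and $x^\ast$ are compatible with these signs. Granting this, the sign $(-1)^{jk}$ propagates to all integer powers, and the braiding sign on $\widehat{\T}$ matches the braiding sign in $\R$ summand by summand, so $F$ is symmetric monoidal.

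Uniqueness up to monoidal natural isomorphism then follows exactly as in the previous two theorems: any 2-rig map sending $\ell$ to $x$ must preserve coproducts, tensor products, and the passage to inverses, so it is forced to agree with $F$ on the generating objects $\ell^{\otimes n}$, and hence on all of $\widehat{\T}$. I expect the main obstacle to be precisely the sign computation for the inverse object, namely confirming that duality interacts correctly with the fermionic sign so that $(-1)^{jk}$ is the correct coefficient for all $j,k \in \Z$ and not merely for $j,k \ge 0$. Once the identity $\sigma_{x^\ast, x^\ast} = -1_{x^\ast \otimes x^\ast}$ is in hand, the remainder of the argument is a routine transcription of the proofs of \cref{thm:free_2-rig_on_bosonic_line} and \cref{thm:free_2-rig_on_fermionic_subline}.
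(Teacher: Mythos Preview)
Your proposal is correct and follows exactly the approach the paper intends: the paper's proof is the single sentence ``The proof follows the same pattern as that of \cref{thm:free_2-rig_on_fermionic_subline},'' and your argument is precisely that pattern grafted onto the $\Z$-graded construction of \cref{thm:free_2-rig_on_bosonic_line}. You have in fact gone further than the paper by explicitly flagging the one point it leaves implicit, namely that the Koszul sign rule $\sigma_{x^{\otimes j},\,x^{\otimes k}} = (-1)^{jk}$ must be extended to negative exponents via the verification that $x^\ast$ is again fermionic; this is a genuine (if small) step that the paper's one-line proof does not isolate.
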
 

\begin{proof}
The proof follows the same pattern as that of \cref{thm:free_2-rig_on_fermionic_subline}.
\end{proof}

We conclude by explaining the `supersymmetry' involution on $\ksbar$.
In \cite[Sec.\ 7]{Schur} we defined a 2-rig $\G$ of $\mathbb{Z}_2$-graded Schur objects. 
The underlying category of $\G$ is the product $\ksbar \times \ksbar$. We write objects of $\G$ as $C = (C_0, C_1)$, and we call $C_0$ and $C_1$ the \define{bosonic} and \define{fermionic} parts of $C$. The tensor product on $\G$ is graded tensor product
\[
    (C_0, C_1) \otimes (D_0, D_1) = ((C_0 \otimes D_0) \oplus (C_1 \otimes D_1), \; (C_0 \otimes D_1) \oplus (C_1 \otimes D_0)).
\]
and the symmetry inserts a minus sign when switching two fermionic parts:
\[
    (S_{C, D})_0 = S_{C_0, D_0} \oplus -S_{C_1, D_1}, \quad 
    (S_{C, D})_1 = S_{C_0, D_1} \oplus S_{C_1, D_0} .
\]
There is an essentially unique 2-rig map 
\[  \phi_- \maps \ksbar \to \G \]
that sends the generating object $x \in \ksbar$ to the graded object $(0, x) \in \G$. There is also a functor
\[    T \maps \G \to \ksbar \]
sending any object $(C_0, C_1) \in \G$ to the direct sum $C_0 \oplus C_1 \in \ksbar$, and similarly for morphisms. This functor $T$ is monoidal and $k$-linear, but not a 2-rig map because it is not \emph{symmetric} monoidal. The composite
\[     \Omega = T \circ \phi_- \maps \ksbar \to \ksbar \]
is also a monoidal $k$-linear functor but not a 2-rig map.

To see that $\Omega$ is an involution, and better understand its properties, note from \cite[Thms.\ 9, 10]{Schur} that we can identify $\ksbar$ with the category of polynomial species, i.e.\ functors 
\[
F \maps k\S \to \Fin\Vect 
\]
for which all but finitely many values $F(n)$ are zero. If we describe $\Omega$ in these terms, a calculation shows that we have a natural isomorphism
\[    (\Omega F)(n) \cong \det(n) \otimes F(n) \]
where $\det(n)$ is the sign representation of $S_n$. This shows that $\Omega$ sends the object corresponding to any Young diagram to the object corresponding to the reflected version of that Young diagram. Furthermore, since $\det(n) \otimes \det(n)$ is the trivial representation, we have
\[         \Omega^2 \simeq 1_{\ksbar} \]
as monoidal $k$-linear functors. 

These facts suggest that most of our main theorems should have fermionic analogues. 

\section{More on graded 2-rigs}
\label{app:graded_2-rigs}

We gave a quick treatment of $\N$-graded 2-rigs in \cref{sec:graded_2-rigs}.  Here we put the theory of graded 2-rigs on a firmer and more general footing.

We start with some ordinary algebra. Let $M$ be a commutative monoid with identity element $e$. An $M$-graded vector space is a collection of vector spaces $V_m$, one for each $m \in M$; the category of $M$-graded vector spaces is a 2-rig. An $M$-graded algebra is a monoid with respect to the tensor product $\otimes$ of this 2-rig; equivalently, an $M$-graded algebra $R$ is an $M$-graded vector space $(R_m)_{m \in M}$ together with linear maps 
\[
R_m \otimes R_n \to R_{mn}, \qquad k \to R_e
\]
for all $m,n \in M$, satisfying appropriate associativity and unit conditions. As shown in 
\cref{lem:graded_over_commutative_monoid}, the 2-rig of $M$-graded vector spaces of finite total dimension is equivalent to the 2-rig of finite-dimensional comodules of the bicommutative bialgebra $kM$, where the commutative multiplication is used to give this 2-rig its symmetric monoidal structure. We can also drop the finite-dimensionality conditions here.

These ideas can be categorified in a straightforward way. For example, we have the following definition, which we state roughly on a first pass. As before, let $M$ be a commutative monoid with unit $e$. We define an `$M$-graded 2-rig' $\R$ to be a collection of Cauchy complete linear categories $\R_m$, one for each $m \in M$, together with linear functors 
    \[
    \R_m \boxtimes \R_n \to \R_{mn}, \qquad \Fin\Vect \to \R_e
    \]
for all $m,n \in M$, together with appropriate associators and unitors satisfying the usual coherence laws in a symmetric monoidal category. 

A deeper approach is to categorify the notion of comodule over a bialgebra, and describe gradings in terms of 2-comodules over 2-bialgebras. Interestingly, just as $M$-graded vector spaces are the same as comodules of the bialgebra $kM$, $M$-graded Cauchy complete linear categories turn out to be the same as 2-comodules of the 2-bialgebra $\overline{kM}$.

To pursue this approach, we must recall from \cite[Sec.\ 3]{Schur} that the 2-category $\Cauch\Lin$ is symmetric monoidal with the tensor product $\boxtimes$ described in the proof of Lemma 15 of that paper.   A symmetric pseudomonoid in $(\Cauch\Lin, \boxtimes)$ is the same as a 2-rig. 

Sch\"appi proved that given symmetric pseudomonoids $\R$ and $\R'$ in a symmetric monoidal 2-category, there is a natural way to make their tensor product $\R \boxtimes \R'$ into a symmetric pseudomonoid that is the coproduct of $\R$ and $\R'$ \cite[Thm.\ 5.2]{Schappi}.  He showed this construction gives a symmetric monoidal 2-category of symmetric pseudomonoids, which is cocartesian in the 2-categorical sense. Applying this construction to $(\Cauch\Lin, \boxtimes)$, it follows that $(\TRig, \boxtimes)$ is cocartesian symmetric monoidal. Alternatively we can treat $\boxtimes$ as the product in $\TRig\op$, which is cartesian symmetric monoidal. 

\begin{defn}
A \define{2-bialgebra} $\B$ is a pseudomonoid in $(\TRig\op, \boxtimes)$. We say a 2-bialgebra $\B$ is \define{cocommutative} if this pseudomonoid is symmmetric.
\end{defn}

More concretely, a 2-bialgebra is a 2-rig $\B$ equipped with linear functors called a \define{comultiplication} $\delta \maps \B \to \B \boxtimes \B$ and \define{counit} $\varepsilon \maps \B \to \Fin\Vect$ obeying the laws of a bialgebra up to coherent linear natural isomorphisms.

Next we introduce 2-comodules of 2-bialgebras. In general, any pseudomonoid $\mathsf{M}$ in a symmetric monoidal 2-category $(\mathbf{K}, \boxtimes)$ induces a pseudomonad $- \boxtimes \mathsf{M}$ on the underlying 2-category $\mathbf{K}$. Similarly, a pseudomonoid $\mathsf M$ in $(\mathbf{K}\op, \boxtimes)$ induces a pseudomonad $- \boxtimes \mathsf M$ on $\mathbf{K}\op$, which we can also call a `pseudocomonad' on $\mathbf{K}$. In particular, any 2-bialgebra $\B$ induces a pseudocomonad $- \boxtimes \B$ on $\Cauch\Lin$.

\begin{defn}
A \define{2-comodule} of a 2-bialgebra $\B$ is a pseudocoalgebra of the pseudocomonad $- \boxtimes \B$ on $\Cauch\Lin$.
\end{defn}

Unpacking this a bit, a 2-comodule of a 2-bialgebra $\B$ is a Cauchy complete linear category $\C$ equipped with a linear functor called a \define{coaction}
\[    \eta \maps \C \to \C \boxtimes \B \]
obeying the usual axioms for a comodule up to  `coassociator' and `right counitor' natural isomorphisms that obey versions of the pentagon and unitor equations.

We will be especially interested in 2-comodules that are also 2-rigs, where $\eta$ and the other structures just mentioned are compatible with the 2-rig structure. We can define these `2-rig 2-comodules' as follows. Suppose $\B$ is a 2-bialgebra. Then $- \boxtimes \B$ defines a pseudocomonad, not only on $\Cauch\Lin$, but on $\TRig$. 

\begin{defn}
A \define{2-rig 2-comodule} of a 2-bialgebra $\B$ is a pseudocoalgebra of the pseudocomonad $- \boxtimes \B$ on $\TRig$.
\end{defn}

In other words, a 2-rig 2-comodule of $\B$ is a 2-rig $\R$ that is a 2-comodule of $\B$ for which the coaction
\[    \eta \maps \R \to \R \boxtimes \B \]
is a morphism in $\TRig$ and the coassociator and right counitor are 2-morphisms in $\TRig$.

\begin{example}
\label{ex:kM_2-bialgebra}
For a commutative monoid $M$, the 2-rig $\overline{kM}$ discussed in \cref{lem:graded_over_commutative_monoid} acquires a 2-bialgebra structure from the diagonal $\Delta \colon M \to M \times M$ and the map to the terminal commutative monoid, $! \maps M \to 1$.  The diagonal induces the comultiplication $\delta$ on $\overline{kM}$ given by the composite
\[
\begin{tikzcd}
    \overline{kM}
    \arrow[r, "\overline{k\Delta}"] \arrow[rr, bend left, "\delta"]
    &
     \overline{k(M \times M)}
    \arrow[r, "\sim"]
    &
    \overline{kM} \boxtimes \overline{kM}
\end{tikzcd}
\]
where the equivalence comes from the proof of \cref{lem:rep_of_product}.   The map to the terminal commutative monoid induces the counit $\varepsilon$ on $\overline{kM}$ given by the composite
\[
\begin{tikzcd}
    \overline{kM}
    \arrow[r, "\overline{k!}"] \arrow[rr, bend left, "\varepsilon"]
    &
     \overline{k1}
    \arrow[r, "\sim"]
    &
    \Fin\Vect
\end{tikzcd}
\]
The resulting 2-bialgebra $\overline{kM}$ is cocommutative.
\end{example}

We may thus make the following definitions for any commutative monoid $M$:

\begin{defn}
An \define{$M$-graded Cauchy complete linear category} is a 2-comodule of $\overline{kM}$.
\end{defn}

\begin{defn}
An \define{$M$-graded 2-rig} is a 2-rig 2-comodule of $\overline{kM}$.
\end{defn}

It is worth spelling out these definitions in a more down to earth way.

\begin{lem}
\label{lem:M-graded_ccc}
For a Cauchy complete $k$-linear category $\C$ to be $M$-graded is equivalent to it being equipped with Cauchy complete $k$-linear subcategories $\C_m$, one for each $m \in M$, such that the inclusions $i_m \maps \C_m \to \C$ induce an equivalence in $\Cauch\Lin$:
\[   \bigoplus_{m \in M} \C_m  \xlongrightarrow{\sim} \C .\]
\end{lem}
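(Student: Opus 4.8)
The plan is to categorify the computation in the proof of \cref{lem:graded_over_commutative_monoid}, where comodules of the coalgebra $kM$ were identified with $M$-graded vector spaces. The starting point is to describe the 2-bialgebra $\overline{kM}$ concretely as a Cauchy complete linear category. By \cref{lem:graded_over_commutative_monoid} its underlying category is that of $M$-graded vector spaces of finite total dimension, so by \cref{lem:CauchLin_has_coproducts} we have an identification $\overline{kM} \simeq \bigoplus_{m \in M} \Fin\Vect$ in $\Cauch\Lin$, where the $m$th summand consists of the vector spaces concentrated in grade $m$. Under this identification, and using \cref{ex:kM_2-bialgebra} together with \cref{lem:rep_of_product}, the counit $\varepsilon \maps \overline{kM} \to \Fin\Vect$ (induced by $! \maps M \to 1$) is the fold functor $(V_m)_m \mapsto \bigoplus_m V_m$, while the comultiplication $\delta \maps \overline{kM} \to \overline{kM} \boxtimes \overline{kM} \simeq \bigoplus_{(m,n)} \Fin\Vect$ (induced by the diagonal $\Delta \maps M \to M \times M$) carries the $m$th summand isomorphically to the $(m,m)$th summand.

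Next I would exploit the fact that $\boxtimes$ preserves coproducts in each variable and has $\Fin\Vect$ as its unit, which yields a natural equivalence $\C \boxtimes \overline{kM} \simeq \bigoplus_{m \in M} \C$. A coaction $\eta \maps \C \to \C \boxtimes \overline{kM}$ thus amounts to a family of linear functors $\eta_m \maps \C \to \C$ with $\eta(c) = (\eta_m c)_m$; since objects of a coproduct have finite support, $\eta_m c \cong 0$ for all but finitely many $m$. Translating the two pseudocoalgebra axioms through the concrete descriptions of $\varepsilon$ and $\delta$ above, the right counitor becomes a natural isomorphism $c \cong \bigoplus_{m} \eta_m c$, and the coassociator becomes natural isomorphisms exhibiting the $\eta_m$ as orthogonal idempotents: $\eta_n \eta_m \cong \eta_m$ when $n = m$ and $\eta_n \eta_m \cong 0$ when $n \neq m$.

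From this data I would build the grading. Define $\C_m$ to be the full subcategory of $\C$ on objects $c$ with $\eta_n c \cong 0$ for all $n \neq m$; since each $\eta_m$ is a linear functor it preserves biproducts and splittings of idempotents, so $\C_m$ is a Cauchy complete linear subcategory, and each $\eta_m c$ lies in $\C_m$ by the orthogonal-idempotent relation. The counitor gives $c \cong \bigoplus_m \eta_m c$ with the summands in the respective $\C_m$, which is essential surjectivity of $\bigoplus_m \C_m \to \C$. For full faithfulness the key point is orthogonality of distinct grades: given $f \maps c \to d$ with $c \in \C_m$, $d \in \C_n$ and $m \neq n$, naturality of the counitor identifies $f$ with $\bigoplus_k \eta_k f$, and because morphisms of a coproduct category are block-diagonal, each component $\eta_k f \maps \eta_k c \to \eta_k d$ has either $\eta_k c \cong 0$ (when $k \neq m$) or $\eta_k d \cong 0$ (when $k \neq n$), forcing $f = 0$. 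Hence $\C(c,d) \cong \bigoplus_m \C_m(c_m, d_m)$ after decomposing source and target into their graded pieces, which is exactly fully faithfulness of $\bigoplus_m \C_m \xrightarrow{\sim} \C$. Conversely, given subcategories $\C_m$ with $\bigoplus_m \C_m \xrightarrow{\sim} \C$, I would define $\eta$ by letting $\eta_m$ be projection onto $\C_m$ followed by inclusion; the comodule axioms and their coherence isomorphisms are then read off from the associativity and unit data of the coproduct decomposition, and the two constructions are visibly mutually inverse.

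The main obstacle I anticipate is not the bookkeeping of idempotents but the coherence of the pseudocoalgebra structure: the counitor and coassociator are natural isomorphisms subject to pentagon- and triangle-type equations, and one must check that the extracted decomposition, and the equivalence $\bigoplus_m \C_m \simeq \C$, is compatible with these coherences (and conversely that the coherence data of a coproduct decomposition satisfies the pseudocoalgebra laws). A secondary technical point requiring care is the justification that $\boxtimes$ distributes over the possibly infinite coproduct $\bigoplus_{m \in M}$ and that the resulting finite-support condition on coactions is automatic; this is the categorified shadow of the finite-total-dimension hypothesis built into $\overline{kM}$.
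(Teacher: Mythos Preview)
Your proposal is correct and follows essentially the same approach as the paper: both arguments pass through the equivalence $\C \boxtimes \overline{kM} \simeq \bigoplus_{m \in M} \C$, extract projection endofunctors (your $\eta_m$, the paper's $p_m$), use the counitor to obtain $c \cong \bigoplus_m \eta_m c$ and the coassociator to obtain the orthogonal-idempotent relations $\eta_n\eta_m \cong \delta_{mn}\eta_m$, and then build the converse from projections. The only cosmetic differences are that the paper defines $\C_m$ as the full image of $p_m$ (equivalent to your full subcategory once the idempotent relations are in hand) and verifies the equivalence $\bigoplus_m \C_m \simeq \C$ by exhibiting an explicit pseudo-inverse $\sigma \circ \iota$ (the fold map restricted to the graded pieces) rather than checking essential surjectivity and full faithfulness separately; both routes use the same ingredients.
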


\begin{proof} 
For a Cauchy complete $k$-linear category $\C$ to be $M$-graded means that it is equipped with a coaction 
\[     \eta \maps \C \to \C \boxtimes \overline{kM}. \] 
There is an equivalence of Cauchy complete $k$-linear categories 
\[   \alpha \maps \C \boxtimes \overline{kM} \xlongrightarrow{\sim} \bigoplus_{m \in M} \C \]
given as the composite
\[
    \C \boxtimes \overline{kM}
    \xlongrightarrow{\sim} 
    \C \boxtimes (\bigoplus_{m \in M} \Fin\Vect) 
    \xlongrightarrow{\sim}
    \bigoplus_{m \in M} (\C \boxtimes \Fin\Vect)
    \xlongrightarrow{\sim}
    \bigoplus_{m \in M} \C.
\]

We denote the composite of $\eta$ and $\alpha$ by $\eta'$:
\[
\begin{tikzcd}
    \displaystyle{ \C }
    \arrow[r, swap, "\eta"]
    \arrow[rr, bend left, "\eta'"]
    & 
    \displaystyle{ \C \boxtimes \overline{kM} }
    \arrow[r, swap, "\alpha"]
    &
    \displaystyle{ \bigoplus_{m \in M}  \C. }
\end{tikzcd}
\]
This map $\eta'$ is an equivalent, more tractable version of the comultiplication $\eta$.

Composing $\eta'$ and the projection $\pi_m \maps \bigoplus_{m \in M} \C \to \C$, we obtain a linear functor $p_m \maps \C \to \C$ which takes any object or morphism of $\C$ to its homogeneous component in degree $m$:
\[
\begin{tikzcd}
    \displaystyle{ \C }
    \arrow[r, swap, "\eta'"] \arrow[rr, bend left, "p_m"]
    &
   \displaystyle{  \bigoplus_{m \in M} \C }
    \arrow[r, swap, "\pi_m"]
    &
    \C.
\end{tikzcd}
\]
We define $\C_m$ to be the full image of $p_m$, which is a Cauchy complete $k$-linear subcategory of $\C$. 

By definition of the functors $p_m$, the map $\eta'$ takes an object $c \in \C$ to the tuple $(p_m(c))_{m \in M}$.  Thus, the linear functor
\[  \begin{array}{cccl} \theta \maps & \C & \longrightarrow & \displaystyle{ \bigoplus_{m \in M} \C_m }  \\ \\
                      &  c & \mapsto & \displaystyle{ (p_m(c))_{m \in M} }  
\end{array}
\]
followed by the evident inclusion
\[   \iota \maps \bigoplus_{m \in M} \C_m \to 
\bigoplus_{m \in M} \C \]
is isomorphic to $\eta'$:
\[  \eta' \cong \iota \circ \theta .\]

We now show that $\theta$ is an equivalence of Cauchy complete linear categories.  We do this by introducing the linear functor 
\[  \begin{array}{cccl} \sigma \maps & \displaystyle{ \bigoplus_{m \in M} \C } 
                        & \longrightarrow & \C \\ \\
                        & \displaystyle{ (c_m)_{m \in M}   } & \mapsto & \displaystyle{ \bigoplus_{m \in M} c_m }  
\end{array}
\]
and showing that $\sigma \circ \iota$ is a pseudo-inverse of $\theta$: that is, an inverse up to linear natural isomorphism.

To show that $\sigma \circ \iota$ is a left pseudo-inverse of $\iota \circ \theta$, consider the following diagram:
\[  \begin{tikzcd}
&& \displaystyle{ \bigoplus_{m \in M} \C_m } \ar[drr, "\iota"] 
\\
\C \ar[rr, "\eta "] \ar[ddrr, swap, "1"] \ar[urr, "\theta"] \ar[rrrr, bend left = 20, "\eta'"]
&& \C \boxtimes \overline{kM} \ar[d, "1 \boxtimes \, \varepsilon"] \ar[rr, "\alpha", "\sim"']
&& \displaystyle{ \bigoplus_{m \in M} \C} \ar[lldd, "\sigma"]
\\ 
&& \C \boxtimes \Fin\Vect \ar[d, "\sim" labl] \\
&& \C
\end{tikzcd} \]
where $\C \boxtimes \Fin\Vect \xrightarrow{\sim} \C$ is the right unitor in $\Cauch\Lin$.   Because $\C$ is a comodule of $\overline{k M}$, the triangle at left involving the coaction $\eta$ of $\overline{k M}$ on $\C$ and the counit $\varepsilon$ of $\overline{k M}$ commutes up to an isomorphism called the `counitor'.  The triangle at right commutes up to isomorphism thanks to the description of the counit $\varepsilon$ in \cref{ex:kM_2-bialgebra}.   We have already seen that the two triangles involving $\eta'$ commute 
up to isomorphism.  The diagram thus shows that $\sigma$ is
a left pseudo-inverse of $\iota \circ \theta$.  In terms of our concrete formulas for these functors, this fact says precisely that for $c \in \C$ there is a natural isomorphism
\[         c \cong \bigoplus_{m \in M} p_m(c)  .\]

In a similar way we can use the coassociator for the coaction of $\overline{k M}$ on $\C$ to show $\sigma \circ \iota$ is a right pseudo-inverse of $\theta$.  To begin with, note using \cref{ex:kM_2-bialgebra} that the linear functor
\[  1 \boxtimes \delta \maps \C \boxtimes \overline{kM} \to \C \boxtimes \overline{kM} \boxtimes \overline{kM} \]
induced by the comultiplication $\delta$ of $\overline{kM}$ corresponds to the map 
\[  \Delta \maps \bigoplus_{m \in M} \C \to \bigoplus_{(m, n) \in M \times M} \C \] 
that sends the tuple $(c_m)$ to $(\delta_{mn} c_m)_{(m, n)}$.   Again exploiting the equivalence $\alpha$, the coassociator gives a natural isomorphism filling in this square:
\[  \begin{tikzcd} 
\C \ar[rr, "\eta' "] \ar[d, swap, "\eta' "] && \displaystyle{ \bigoplus_{m \in M} \C } \ar[d, "\Delta"] \\ 
\displaystyle{  \bigoplus_{n \in M} \C } \ar[rr, swap, "\bigoplus_{n \in M} \eta' "] &&
\displaystyle{ \bigoplus_{(m, n) \in M \times M} \C} . \end{tikzcd}
\]
This directly translates to an $M \times M$-indexed array of natural isomorphisms 
\[  \delta_{mn} p_m(c) \cong p_m p_n(c). \]
In other words, $p_m p_m(c) \cong p_m(c)$ and $p_m p_n(c) \cong 0$ if $m \neq n$. We then obtain, for $(c_m)_{m \in M}  \in  \bigoplus_m p_m(\C)$, a series of isomorphisms 
\[ \begin{array}{rcll} 
(\theta \circ \sigma \circ \iota)(c_m)_{m \in M} & \cong & \theta (\bigoplus_m c_m) & \text{(definition of $\sigma, \iota $)}\\ 
& \cong & (p_n(\bigoplus_m c_m))_{n \in M} & \text{(definition of $\theta$)} \\  
& \cong & \left(\bigoplus_m p_n(c_m) \right)_{n \in M} & \text{($p_n$ preserves finite coproducts)} \\ 
& \cong & (c_n)_{n \in M} & \text{($c_m \in p_m(\C)$, $p_np_m \cong 0$ if $n \ne m$,} \\
& & & \text{ $p_m p_m \cong p_m$) }
\end{array} \] 
so that $\sigma \circ \iota$ is a right pseudo-inverse of $\theta$.

Conversely, suppose $\C$ is equipped with Cauchy complete $k$-linear subcategories $\C_m$, one for each $m \in M$, such that the inclusions $i_m \maps \C_m \to \C$ induce an equivalence 
\[    \C \simeq \bigoplus_{m \in M} \C_m .\] 
We wish to define a coaction $\eta$ of $\overline{k M}$ on $\C$.  To do this, let $p_m \maps \C \to \C_m$ be the projections onto these summands, and define
$\eta' \maps \C \to \bigoplus_{m \in M} \C$ by 
\[   \eta'(c) = (p_m(c))_{m \in M}. \]
Define $\eta \maps \C \to \C \boxtimes \overline{kM}$ by
\[     \eta = \beta \circ \eta' \]
where $\beta$ is any pseudo-inverse of the equivalence 
\[     \alpha \maps \C \boxtimes \overline{kM} \xrightarrow{\sim} \bigoplus_{m \in M} C \]
defined earlier.   To make $\eta$ into a coaction of $\overline{k M}$ on $\C$, we use the natural isomorphism
\[    c \cong \bigoplus_{m \in M} p_m(c) \]
to provide a counitor, and use the natural isomorphisms
\[   
p_m p_n (c) \cong \left\{ \begin{array}{cl}  p_n(c) & \text{if } m = n \\
        0      & \text{if } m \ne n
\end{array} \right. 
\]
to prove a coassociator, reversing the constructions above.  Finally, one can check that the counitor and coassociator obey the required coherence laws, and that this construction is inverse to the one described earlier, up to equivalence.
\end{proof}

\begin{lem}
\label{lem:M_graded_2-rig}
For a 2-rig $\R$ to be $M$-graded is equivalent to it being equipped with Cauchy complete $k$-linear subcategories $\R_m$, one for each $m \in M$, such that the inclusions $i_m \maps \R_m \to \R$ induce an equivalence in $\Cauch\Lin$:
\[   \bigoplus_{m \in M} \R_m  \xlongrightarrow{\sim} \R \]
and the tensor product and unit of $\R$ respect this decomposition:
\[    \otimes \maps \R_m \boxtimes \R_n \to \R_{m+n} , \qquad I \in \R_0. \]
\end{lem}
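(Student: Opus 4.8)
The plan is to build directly on \cref{lem:M-graded_ccc}, which already handles the purely additive content, and then add the multiplicative layer. Recall that an $M$-graded 2-rig is a pseudocoalgebra of $-\boxtimes \overline{kM}$ in $\TRig$, so in particular its underlying Cauchy complete linear category is a 2-comodule of $\overline{kM}$ in $\Cauch\Lin$. Thus \cref{lem:M-graded_ccc} immediately supplies the subcategories $\R_m$ (the full images of the projections $p_m$) together with the equivalence $\bigoplus_{m \in M} \R_m \xrightarrow{\sim} \R$. All that remains, in each direction, is to relate the \emph{monoidal} part of the coaction to the compatibility conditions $\otimes \maps \R_m \boxtimes \R_n \to \R_{mn}$ and $I \in \R_e$.

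The key computation is to identify the monoidal structure transported along the equivalence $\alpha \maps \R \boxtimes \overline{kM} \xrightarrow{\sim} \bigoplus_{m \in M} \R$ from the proof of \cref{lem:M-graded_ccc}. By \cref{lem:graded_over_commutative_monoid} the 2-rig $\overline{kM}$ is the 2-rig of $M$-graded vector spaces, so under $\overline{kM} \simeq \bigoplus_{m} \Fin\Vect$ its tensor product is the convolution product sending the summands at $m$ and $n$ to the summand at $mn$, with unit concentrated at $e$. Since $\boxtimes$ preserves coproducts in each variable, the tensor product of 2-rigs on $\R \boxtimes \overline{kM}$ transports through $\alpha$ to the convolution product on $\bigoplus_m \R$,
\[
    (c_a)_{a} \otimes (d_b)_{b} = \Bigl( \bigoplus_{ab = p} c_a \otimes d_b \Bigr)_{p},
\]
with unit $I$ placed in grade $e$ and zero elsewhere.

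Given this, the forward direction is routine. If $\R$ is an $M$-graded 2-rig, the coaction $\eta$ is a 2-rig map, hence so is $\eta' = \alpha \circ \eta \maps \R \to \bigoplus_m \R$, $c \mapsto (p_m(c))_m$. Its strong monoidal structure unpacks to natural isomorphisms $p_p(c \otimes c') \cong \bigoplus_{ab = p} p_a(c) \otimes p_b(c')$ together with $\eta'(I) \cong (\text{unit})$. Taking $c \in \R_a$ and $c' \in \R_b$ in the first isomorphism forces $c \otimes c' \in \R_{ab}$, i.e.\ $\otimes \maps \R_a \boxtimes \R_b \to \R_{ab}$; the unit isomorphism forces $I \in \R_e$. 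Conversely, given the decomposition together with the graded tensor and unit, \cref{lem:M-graded_ccc} already provides the coaction $\eta$ as a linear functor, along with its counitor and coassociator. Using $c \cong \bigoplus_a p_a(c)$ and the containment $\R_a \otimes \R_b \subseteq \R_{ab}$, one obtains exactly the isomorphisms $p_p(c \otimes c') \cong \bigoplus_{ab=p} p_a(c) \otimes p_b(c')$ and $p_m(I) \cong 0$ for $m \neq e$ needed to promote $\eta'$, hence $\eta$, to a strong monoidal functor.

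The main obstacle lies entirely in the coherence bookkeeping of the converse direction: I would have to check that the strong monoidal structure just constructed on $\eta$ satisfies the monoidal-functor coherence laws (which follow from associativity and unitality of $\otimes$ on $\R$ together with the grading of $\otimes$), and, more delicately, that the counitor and coassociator produced by \cref{lem:M-graded_ccc} are \emph{monoidal} natural isomorphisms, so that $\eta$ is genuinely a pseudocoalgebra in $\TRig$ rather than merely in $\Cauch\Lin$. These verifications are direct but tedious diagram chases in the pseudo setting, exactly parallel to, and building on, those carried out in the proof of \cref{lem:M-graded_ccc}; I would organize them by transporting every structure through the equivalence $\alpha$ so that all computations take place in the concrete convolution 2-rig $\bigoplus_m \R$.
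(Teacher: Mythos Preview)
Your proposal is correct and follows essentially the same approach as the paper: reduce to \cref{lem:M-graded_ccc} for the additive decomposition, then in each direction pass between the 2-rig map property of the coaction $\eta$ and the graded-tensor/unit compatibility. Indeed, the paper's own proof is considerably terser than yours---it simply asserts that ``using the fact that the coaction $\eta$ \ldots\ is a 2-rig map, one can show'' the graded compatibility, and conversely that the graded compatibility lets one ``make the coaction $\eta$ into a 2-rig map''---so your identification of the transported monoidal structure on $\bigoplus_m \R$ as the convolution product, and your explicit acknowledgement of the coherence checks in the converse direction, actually supply more detail than the paper does.
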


\begin{proof}
Since an $M$-graded 2-rig $\R$ is an $M$-graded Cauchy complete $k$-linear category, we get a decomposition 
\[   \R \simeq \bigoplus_{m \in M} \R_m  \]
as in \cref{lem:M-graded_ccc}.  Using the fact that the coaction $\eta$ of $\overline{k M}$ on $\R$ is a 2-rig map, one can show that
\[    \otimes \maps \R_m \boxtimes \R_n \to \R_{m+n} , \qquad I \in \R_0. \]
Conversely, any 2-rig equipped with this extra structure can be made into a 2-comodule of $\overline{kM}$ as in \cref{lem:M-graded_ccc}.  Using the fact that the tensor product and unit of $\R$ respect the grading, we can make the coaction $\eta$ into a 2-rig map.
\end{proof}

\begin{example}
\label{ex:kM_graded}
Any commutative 2-bialgebra $B$ is a 2-rig 2-comodule of itself, with its comultiplication $\delta \maps B \to B \boxtimes B$ as the coaction.  We saw in \cref{ex:kM_2-bialgebra} that for any commutative monoid $M$, the 2-rig $\overline{k M}$ is a commutative 2-bialgebra.  Thus $\overline{k M}$ is an $M$-graded 2-rig.
\end{example}

\begin{example}
\label{ex:A-comodules_N-graded}
Recall from \cref{thm:free_2-rig_on_N_bosonic_sublines} that the free 2-rig on a bosonic subline is $\A \simeq \overline{k\N}$. 
It follows that 2-rig 2-coalgebras over $\A$ are the same as $\N$-graded 2-rigs.  By \cref{ex:kM_graded}, $\A$ itself is an $\N$-graded 2-rig. 
\end{example}

To see how various 2-rigs studied in this paper are $\N$-graded, we can exploit their universal properties. 

\begin{example}
\label{ex:ksbar_grading_app}
The canonical $\N$-grading on the free 2-rig on one generator, $\ksbar$, comes from the coaction 
\[
\ksbar \to \ksbar \boxtimes \A
\]
that is the unique 2-rig map sending the generator $x$ to $x \otimes s$, where $s$ is the generating bosonic subline object of $\A$. The $n$th grade of $\ksbar$ is precisely the $k$-linear Cauchy completion of $x^n$, or in other words, the linear category $\overline{kS_n}$ of finite-dimensional representations of $S_n$. 

This grading induces a filtration of $\ksbar$ where the $n^{th}$ stage of the filtration is $\ksbar_{\leq n}$, consisting of finite coproducts (within $\ksbar$) of objects belonging to any grade $\ksbar_m$ with $m \leq n$. We have already seen this filtration play an important role in \cref{thm:Ai_ess_inj}, which is a key step toward the splitting principle.
\end{example}

\begin{example}
\label{ex:AN_grading_app}
Each 2-rig $\A^{\boxtimes N}$ has a unique $\N$-grading  
\[
\gamma_N \maps \A^{\boxtimes N} \to \A^{\boxtimes N} \boxtimes \A
\]
taking each generating bosonic subline $s_i$ of $\A^{\boxtimes N}$ to the subline $s_i \otimes s \in \A^{\boxtimes N} \boxtimes \A$. From this point of view, the 2-rig map 
\[   \phi_N \maps \A^{\boxtimes (N+1)} \to \A^{\boxtimes N}, \]
which sends $s_i$ to itself for $1 \le i \le N$ and $s_{N+1}$ to $0$, is actually a map of graded 2-rigs, since each path through the square 
\[
\begin{tikzcd}
    \A^{\boxtimes (N+1)} 
    \ar[r, "\gamma_{N+1}"] 
    \ar[d, "\phi_N"'] 
    & 
    \A^{\boxtimes (N+1)} \boxtimes \A 
    \ar[d, "\phi_N \boxtimes 1"] 
    \\
    \A^{\boxtimes N} 
    \ar[r, "\gamma_N"'] 
    & 
    \A^{\boxtimes N} \boxtimes \A
\end{tikzcd} 
\]
takes $s_i$ to $s_i \otimes s$ for $1 \leq i \leq N$ and takes $s_{N+1}$ to $0$. 

By a similar argument, the 2-rig map $\ksbar \to \A^{\boxtimes N}$ sending $x$ to $s_1 \oplus \cdots \oplus s_N$ also preserves the grading. This is essentially the map we called $B \circ A$ in \cref{sec:network}:
\[
\begin{tikzcd}
    \ksbar
    \ar[r,"A"]
    & 
    \Rep(\M(N,k)) 
    \ar[r,"B"]
    &
    \Rep(k^N) \simeq \A^{\boxtimes N}
\end{tikzcd}
\]
\end{example}

We conclude with a more speculative notion, suggested by the idea that $\ksbar$ is the `true' categorification of the polynomial algebra $k[x]$, since $\ksbar$ is the free 2-rig on one generator just as $k[x]$ is the free $k$-algebra on one generator. It might play a natural role in extensions of the current work. 

\begin{defn}
Let $\C$ be a Cauchy complete linear category. A \define{Young-grading} on $\C$ is a 2-comodule structure on $\C$ over the 2-bialgebra $\ksbar$, where the comultiplication is the co-operation identified as `comultiplication' in \cite[Thm.\ 4.4]{Schur}. 
\end{defn} 
The idea would be that for each Young diagram $D$, there is a corresponding homogeneous component of $\C$ in `degree' $D$.

\bibliographystyle{alpha}
\bibliography{references}

\end{document}